\newtheorem{theorem}{Theorem}[section]
\newtheorem{corollary}[theorem]{Corollary}
\newtheorem{lemma}[theorem]{Lemma}
\newtheorem*{lemma*}{Lemma}
{\theoremstyle{definition}
\newtheorem{definition}[theorem]{Definition}

\newtheorem{example}[theorem]{Example}

\newtheorem*{example*}{Example}
}
\numberwithin{equation}{section}
\newcommand{\suchthat}{\;\ifnum\currentgrouptype=16 \middle\fi|\;}
\newcommand{\scirc}{\raise1pt\hbox{$\,\scriptstyle\circ\,$}}
\newcommand{\R}{\mathbb{R}}
\newcommand{\C}{\mathbb{C}}
\newcommand{\N}{\mathbb{N}}
\newcommand{\Z}{\mathbb{Z}}
\newcommand{\hol}{\mathrm{hol}}
\newcommand{\loc}{\mathrm{loc}}
\newcommand{\bdd}{\mathrm{bdd}}
\newcommand{\LL}[1]{\mathrm{L^{1,{#1}}_{(t_0,x_0)}}}
\newcommand{\AC}[1]{\mathrm{AC}^{#1}_{(t_0,x_0)}}
\newcommand{\LC}[3]{\mathrm{L}(C^{#3}({#1}); C^{#3}({#2}))}
\newcommand{\ACC}[4]{\mathrm{AC}(\mathbb{#1}; \LC{#2}{#3}{#4})}
\begin{document}

\title{\LARGE \bf
Local and global holomorphic extensions of time-varying real analytic vector fields}

\author{Saber Jafarpour% <-this % stops a space
%\thanks{*This work was not supported by any organization}% <-this % stops a space
\thanks{Saber Jafarpour is graduate student in the Department of Mathematics and Statistics,
       Queen's University, Kingston, Canada.
        {\tt\small saber.jafarpour@queensu.ca}}}

\maketitle
\thispagestyle{empty}
\pagestyle{empty}

%%%%%%%%%%%%%%%%%%%%%%%%%%%%%%%%%%%%%%%%%%%%%%%%%%%%%%%%%%%%%%%%%%%%%%%%%%%%%%%%

\begin{abstract}
In this paper, we consider time-varying real analytic vector fields as
curves on the space of real analytic vector fields. Using a
suitable topology on the space of real analytic vector fields, we
study and characterize different properties of time-varying real
analytic vector fields. We study holomorphic extensions of time-varying
real analytic vector fields and show that under suitable
\textit{integrability} conditions, a time-varying real analytic vector field
on a manifold
can be extended to a time-varying holomorphic vector field on a
neighbourhood of that manifold. Moreover, we develop an operator
setting, where the \textit{nonlinear} differential equation governing the flow of a
time-varying real analytic vector field can be considered as a \textit{linear}
differential equation on an infinite dimensional locally convex vector
space.  Using the holomorphic extension results, we show that the \textit{integrability}
of the time-varying vector field ensures the convergence of the sequence
of Picard iterations for this linear differential equation. This gives us
a series representation for the flow of an integrable time-varying real analytic
vector field. We also define the exponential map between integrable time-varying
real analytic vector fields and their flows. Using the
holomorphic extensions of time-varying real analytic vector fields, we
show that the exponential map is sequentially continuous.  \vspace{0.5cm}

\textbf{Keywords}. Space of real analytic vector fields, Time-varying vector field,
Holomorphic extension, Linear differential equations on locally convex
spaces.

\end{abstract}

\section{Introduction}\label{sec:1}

The early development of the notion of \textit{real analyticity} in
mathematics has a closed connection with the development of the notion
of \textit{function}. Prior to the nineteenth
century, most of the functions used in mathematical analysis were
constructed either by applying algebraic operators on elementary functions or by a
power series except possibly at some singular points \cite{GGB:1982}. Therefore, 
mathematicians had difficulty understanding functions which are not
real analytic. It is surprising to know that Lagrange and Hankel believed that the existence of all
derivatives of a function implies the convergence of its Taylor series
\cite{GGB:1982}. It was only in the late 
nineteenth century that mathematicians started to think more carefully about the natural
question of which functions can be expanded in a Taylor series around
a point.
In 1823, Cauchy came up with a function which was
$C^{\infty}$ everywhere not real analytic at $x=0$ \cite{AC:1823}, \cite{GGB:1982}. In the
modern terminology, this function can be expressed as
\begin{equation}\label{eq:0}
f(x)=
\begin{cases}
e^{\frac{-1}{x^2}}& x\ne 0,\\
0 & x=0.
\end{cases}
\end{equation}
Starting from early twentieth
century, with the advent of the more precise notion of function,
mathematicians came up with other examples of smooth but not
real-analytic functions whose singular points have completely
different natures \cite{GGB:1982}. 

Roughly speaking, a map $f$ is real
analytic on a domain $D$ if the Taylor series of $f$ around every
point $x_0\in D$
converges to $f$ in a neighbourhood of $x_0$. By definition, for the Taylor
series of $f$ on $D$ to exist, derivatives of $f$ of any order
should exist and be continuous at every point $x_0\in D$. This means that all real
analytic maps are of class $C^{\infty}$. As is shown by the function
\eqref{eq:0}, the
converse implication is not true. In fact, given an open connected set
$\Omega\subseteq \R^n$, one can construct a family of nonzero smooth
functions on $\R^n$ which are zero on the set $\Omega$. However, by the identity
theorem, every
real analytic function which is zero on the set $\Omega$ should be
zero everywhere. This shows that the
gap between real analytic functions and smooth functions is huge \cite{SGK:2002}.

Real analytic vector fields on $\R^n$ have a close connection with the
holomorphic vector fields defined on neighbourhoods of $\R^n$ in $\C^n$. It is well-known that every real analytic vector field $f$ on $\R^n$ can be extended to a
holomorphic vector field defined on an appropriate domain in $\C^n$. However, it may
not be possible to extend the real analytic vector field $f$ to a
holomorphic vector field on the whole domain $\C^n$. This observation suggests that one should consider a
real analytic vector field as a germ of a holomorphic vector field. This
perspective for real analytic vector fields motivates the definition of a natural
topology on the space of real analytic vector fields. Unfortunately, there
does not exist a single domain such that \textit{every} real analytic
vector field on $\R^n$ can be extended to a holomorphic vector field on that
domain. The following example shows this fact.
\begin{example}
For every $n\in \N$, consider the function $f_n:\R\to \R$ defined as 
\begin{equation*}
f_n(x)=\frac{1}{1+n^2x^2},\qquad\forall x\in \R.
\end{equation*}
It is easy to see that, for every $n\in\N$, the function $f_n$ is real
analytic on $\R$. We show that there does not exist a neighbourhood
$\Omega$ of $\R$ in $\C$ such that, for every $n\in \N$, the real
analytic function $f_n$ can be extended to a holomorphic function on
$\Omega$.  Suppose that such an $\Omega$ exists. Then there exists
$r>0$ such that 
\begin{equation*}
\{x\in \C\mid \|x\|\le r\}\subseteq \Omega.
\end{equation*}
Now let $N\in \N$ be such that $\frac{1}{N}<r$ and suppose that
$\overline{f}_N$ be the holomorphic extension of $f_N$ to
$\Omega$. Then, by the identity theorem, we have
\begin{equation*}
\overline{f}_N(z)=\frac{1}{1+N^2z^2},\qquad\forall z\in \Omega.
\end{equation*}
By our choice of $N$, we have $\frac{i}{N}\in \Omega$, but $\overline{f}_N$ is not
defined at $z=\frac{i}{N}$. This is a contradiction and shows that
such an $\Omega$ does not exist.
\end{example}
Thus, the space of real analytic vector fields on $\R^n$, which we denote by $\Gamma^{\omega}(\R^n)$, can be considered as
the \textit{union} of the spaces of holomorphic vector fields defined on neighbourhoods
of $\R^n$ in $\C^n$. This process of taking union can be made precise using the
mathematical notion of \textit{inductive limit}. The space of holomorphic
vector fields on an open set $\Omega\subseteq \C^n$ has been
studied in detail in the literature \cite{kriegl1997convenient},
\cite{PWM:1980}. One can show that the so-called ``compact-open''
topology on the space of
holomorphic vector fields on $\Omega$ is generated by a family of
seminorms and thus is a locally convex
topological vector space \cite{kriegl1997convenient}. Therefore, we can represent the space of
real analytic vector fields on $\R$ as an inductive limit of a family
of locally
convex spaces. The locally convex \textit{inductive limit topology} on $\Gamma^{\omega}(\R^n)$ is
defined as the finest locally convex topology which makes all the inclusions from the
spaces of 
holomorphic vector fields to the space of real analytic vector fields continuous. 

Inductive limits of locally convex spaces arise in many
fields, including partial differential equations, Fourier analysis,
distribution theory, and holomorphic calculus. Historically, locally convex
inductive limits of locally convex spaces first appeared when mathematicians tried to define a suitable topology on the space of distributions. While there is little
literature for inductive limit of arbitrary families of locally convex
spaces, the countable inductive limit of locally convex spaces is rich
in both theory and applications. The importance of the connecting maps in inductive limits of locally
convex spaces was first realized by Jos{\'e} Sebasti{\~a}o e Silva \cite{JSS:1955}. Motivated by
studying the space of germs of holomorphic functions, Sebasti{\~a}o e
Silva investigated inductive limit of locally convex spaces with compact connecting maps. Inductive
limits with weakly compact connecting maps were studied later
by Komatsu in \cite{HK:1967}, where he showed that weakly compact
inductive limits share many nice properties with the compact
inductive limits.

Unfortunately, the space of real analytic vector fields on $\R^n$
is not the inductive limit of a countable family of locally convex spaces. However, it is possible to represent
the space of germs of holomorphic vector fields around a \textit{compact
set} as the inductive limit of a countable family of locally convex
spaces with compact connecting maps \cite[Theorem 8.4]{kriegl1997convenient}. 
Let $\{K_i\}_{i\in \N}$ be a family of compact sets on $\R^n$ such that
$\bigcup_{i=1}^{\infty} K_i=\R^n$ and 
\begin{equation*}
\mathrm{cl}(K_i)\subseteq K_{i+1},\qquad\forall i\in \N.
\end{equation*}
It is interesting to note that the space of real analytic vector fields on
$\R^n$ can also be obtained by \textit{gluing together} the vector spaces
of germs of holomorphic vector fields on compact sets $\{K_i\}_{i\in I}$. The concept of \textit{gluing together}
mentioned above can be made precise using the notion of projective limit of vector
spaces. The coarsest locally convex topology on $\Gamma^{\omega}(\R)$ which makes all
the gluing maps continuous is called the \textit{projective limit topology} on
$\Gamma^{\omega}(\R^n)$. Having defined the \textit{inductive limit topology} and
\textit{projective limit topology} on the space of real analytic vector fields
on $\R^n$, it would be interesting to study the relation between these
two topologies. As to our knowledge,
the first paper that studied the relation between these two topologies
on the space of real analytic vector fields is \cite{AM:1966}, where it is
shown that these two topologies are identical. There has been a recent interest in this topology and its applications in the theory
of partial differential equations \cite{JB-PD:2001}, \cite{ML:2000}.

Time-varying vector fields and their flows arise naturally in
studying physical problems. In particular, in some branches of applied sciences
such as control theory, it is essential to work with time-varying
vector fields whose dependence on time is only
\textit{measurable}. Existence and uniqueness of flows of 
time-varying vector field has been deeply studied in the literature \cite[Chapter 2]{coddington1955theory}. However, theory of time-varying vector
fields with measurable dependence on time and their flows is not as
well-developed as theory of time-invariant vector fields. In this paper, we study time-varying real
analytic vector
fields on a manifold $M$ by considering them as curves on the vector
space $\Gamma^{\omega}(TM)$. Using the $C^{\omega}$-topology on the space of real analytic
vector fields, different properties of this curve can be studied and
characterized. In particular, we can use the framework in \cite{RB-AD:2011} to
define and characterize the \textit{Bochner integrability} of curves on $\Gamma^{\omega}(TM)$.

It is well-known that every real analytic vector fields
can be extended to a holomorphic vector field on a complex
manifold. Consider a time-varying real analytic
vector field on $M$ with some regularity in time. It is interesting to study whether this
time-varying real analytic vector field can be
extended to a time-varying holomorphic vector field on a complex
manifold containing $M$. Unfortunately this holomorphic extension is not generally
possible. As the following
example shows, a measurable
time-varying real analytic vector field may not even have a local holomorphic
extension to a complex manifold.
\begin{example}\label{ex:2}
Let $X:\R\times \R\to T\R$ be a time-varying vector field defined as 
\begin{equation*}
X(t,x)=
\begin{cases}
\frac{t^2}{t^2+x^2}\frac{\partial}{\partial x} & x\ne 0 \mbox{ or } t\ne 0,\\
0 & x,t=0.
\end{cases}
\end{equation*}
Then $X$ is a time-varying vector field on $\R$ which is
locally integrally bounded with respect to $t$ and real analytic with respect to
$x$. However, there does not exist connected neighbourhood $\overline{U}$ of $x=0$ in $\C$ on which
$X$ can be extended to a holomorphic vector field. To see this, let
$\overline{U}\subseteq \C$ be a connected neighbourhood of $x=0$ and let
$\mathbb{T}\subseteq\R$ be a neighbourhood of $t=0$. Let
$\overline{X}:\mathbb{T}\times \overline{U}\to T\C$
be a time-varying vector field which is measurable in time and holomorphic in
state such that
\begin{equation*}
\overline{X}(t,x)=X(t,x)\qquad\forall x\in \R\cap \overline{U},\ \forall
t\in \mathbb{T}.
\end{equation*}
Since $0\in \mathbb{T}$, there exists $t\in \mathbb{T}$ such that $\mathrm{cl}(D(0,t))\subseteq
\overline{U}$. Let us fix this $t$ and define the real analytic vector field $X_t:\R\to T\R$ as 
\begin{equation*}
X_t(x)=\frac{t^2}{t^2+x^2}\frac{\partial}{\partial x},\qquad\forall x\in \R,
\end{equation*}
and the holomorphic vector field $\overline{X}_t:\overline{U}\to T\C$ as
\begin{equation*}
\overline{X}_t(z)=\overline{X}(t,z)\qquad\forall z\in \overline{U},
\end{equation*}
Then it is clear that $\overline{X}_t$ is a holomorphic extension of
$X_t$.  However, one can define another holomorphic vector field $Y:D(0,t)\to T\C$ by
\begin{equation*}
Y(z)=\frac{t^2}{t^2+z^2}\frac{\partial}{\partial z},\qquad\forall z\in D(0,t),
\end{equation*}
It is easy to observe that $Y$ is also a holomorphic extension of
$X_t$. Thus, by the identity theorem, we should have
$Y(z)=\overline{X}_t(z)$, for all $z\in D(0,t)$. Moreover, we should
have $\overline{U}\subseteq D(0,t)$. However, this is a contradiction
with the fact that $\mathrm{cl}(D(0,t))\subseteq \overline{U}$. 
\end{example}

As the above example suggests, without any joint condition on time and
space, it is impossible to prove any holomorphic extension of a
time-varying real analytic vector field to a time-varying holomorphic
vector field. It turns
out that \textit{local Bochner integrability} is the right joint condition
for a time-varying real analytic vector field to ensure the existence of a holomorphic extension. Using the inductive limit characterization
of the space of real analytic vector fields, we show that the \textit{global}
extension of \textit{locally Bochner integrable} time-varying real analytic vector fields is possible.
More specifically, we show that, for a locally Bochner
integrable time-varying real analytic vector field $X$ on $M$, there
exists a locally Bochner integrable time-varying holomorphic vector
field defined on a
neighbourhood of $M$ which agrees with $X$ on $M$. We call this result a \textit{global} extension since it proves the existence of the holomorphic extension of a time-varying vector field to a
neighbourhood of its \textit{whole} state domain.

In order to study the holomorphic extension of a \textit{single}
locally Bochner integrable time-varying real analytic vector field, the global extension result
is a useful tool. However, this extension theorem is indecisive when it comes to
questions about holomorphic extension of all elements of a family of
locally Bochner integrable time-varying real analytic vector fields to a \textit{single} domain. Using
the projective limit characterization of space of real analytic
vector fields, we show that one can \textit{locally} extend every element
of a bounded family of locally Bochner integrable time-varying real
analytic vector fields to a locally Bochner integrable time-varying holomorphic vector field defined on a single domain.

The connection between time-varying vector
fields and their flows is of fundamental importance in the
theory of differential equations and mathematical control theory. The operator approach for studying time-varying vector fields and their
flows in control theory started with the work of Agrachev and Gamkrelidze
\cite{agrachev1978exponential}. One can also find traces of
this approach in the nilpotent Lie approximations for studying controllability of
systems \cite{sussmannlie1983}, \cite{sus1987}. In \cite{agrachev1978exponential} a framework is proposed for
studying complete time-varying vector fields and their flows. The cornerstone of this
approach is the space $C^{\infty}(M)$, which is both an $\R$-algebra
and a locally convex vector space. In this framework, a smooth vector
field on $M$ is considered as a derivation of $C^{\infty}(M)$ and a smooth
diffeomorphism on $M$ is considered as a unital\/ $\R$-algebra
isomorphism of $C^{\infty}(M)$. Using a family of seminorms on
$C^{\infty}(M)$, weak topologies on the space of derivations of
$C^{\infty}(M)$ and on the space of unital\/ $\R$-algebra
isomorphisms of $C^{\infty}(M)$ are defined
\cite{agrachev1978exponential}. Then a time-varying vector field is considered as a curve on the space of derivations of
$C^{\infty}(M)$ and its flow is considered as a curve on the space of $\R$-algebra
isomorphisms of $C^{\infty}(M)$. While this framework seems to be
designed for smooth vector fields and their flows, in
\cite{agrachev1978exponential} and \cite{agrachevcontrol2004} the
same framework is used for studying time-varying \textit{real analytic} vector fields and their flows. In \cite{agrachev1978exponential}, using the characterizations of
vector fields as derivations and
their flows as unital algebra isomorphism, the \textit{nonlinear} differential equation on $\R^n$ for flows of a complete time-varying vector field
is transformed into a \textit{linear} differential equation on the
infinite-dimensional locally convex space
$\LC{\R^n}{\R^n}{\infty}$. While working with linear differential
equations seems to be more desirable than working with their nonlinear
counterparts, the fact that the underlying space of this linear
differential equation is an infinite-dimensional locally convex spaces
makes this study complicated. In fact, the theory of linear ordinary differential
equations on a locally convex spaces is completely different from the
classical theory of linear differential equations on $\R^n$ or Banach
spaces \cite{SGL-GOS:1994}. In \cite{agrachev1978exponential} it has been shown that, if the
vector field is integrable in time, real analytic in state, and has a bounded holomorphic
extension to a neighbourhood of $\R^n$, the sequence of Picard iterations
for the linear infinite-dimensional differential equation converges in
$\LC{\R^n}{\R^n}{\infty}$. In this case, one
can represent flows of a time-varying real analytic system as a series
of iterated composition of the time-varying vector field. 

In this paper, in order to study real analytic vector fields
and their flows in a consistent way, we can
extend the operator approach of
\cite{agrachev1978exponential} by replacing the locally
convex space $C^{\infty}(M)$ with $C^{\omega}(M)$. In particular, using the result of
\cite{JG:1981}, we show that there is a one-to-one
correspondence between real analytic vector fields on $M$ and
derivations of $C^{\omega}(M)$. Moreover, using the results of \cite{AEM:1952}, we show that
$C^{\omega}$-maps are in one-to-one correspondence with unital\/ $\R$-algebra
homomorphisms on $C^{\omega}(M)$. Thus, using the fact that time-varying real analytic vector
fields and their flows are curves on $\LC{M}{M}{\omega}$, we translate
the \textit{nonlinear} differential equation governing the flow a time-varying
real analytic vector field into a \textit{linear} differential equation on
$\LC{M}{M}{\omega}$. In the real analytic case, we show that a solution for the \textit{linear} differential
equation of a locally integrally bounded time-varying real
analytic vector field exists and is unique. In particular, using a
family of generating seminorms on the space of real analytic functions, we show that the
sequence of Picard iterations for our \textit{linear}
differential equation on the locally convex space $\LC{M}{M}{\omega}$
converges. This will generalize the result of \cite[Proposition 2.1]{agrachev1978exponential} to the case of locally Bochner integrable time-varying real
analytic vector fields.

Finally, we define the exponential map
between locally integrally bounded time-varying real analytic vector fields and their
flows. Using the sequence of Picard iteration for flows of
time-varying vector fields, we show that the exponential map is
sequentially continuous.

\section{Mathematical Notations}\label{sec:2}

In this section, we introduce the mathematical notations that we use in
this paper.

Let $r\in \R^{>0}$ and $x_0\in \R^n$,  we denote the disk of radius $r$ with center $x_0$ by $\mathrm{D}(x_0,r)$.
A \textbf{multi-index of order $m$} is an element
$(r)=(r_1,r_2,\ldots,r_m)\in (\Z_{\ge
  0})^m$. For all multindices $(r)$ and $(s)$ of order $m$, every
$x=(x_1,x_2,\ldots,x_m)\in \R^m$, and every $f:\R^m\to\R^n$, we define
\begin{eqnarray*}
|(r)|&=&r_1+r_2+\ldots+r_m,\\
(r)+(s)&=&(r_1+s_1,r_2+s_2,\ldots,r_m+s_m),\\
(r)!&=&r_1!r_2!\ldots r_m!,\\
x^{(r)}&=&x_1^{r_1}x_2^{r_2}\ldots x_m^{r_m},\\
D^{(r)}f(x)&=&\frac{\partial^{|r|} f}{\partial
               x_1^{r_1}\partial x_2^{r_2}\ldots \partial x_m^{r_m}},\\
\binom{(r)}{(s)}&=&\binom{r_1}{s_1}\binom{r_2}{s_2}\ldots \binom{r_m}{s_m}.
\end{eqnarray*}
We denote the multi-index $(0,0,\ldots,1,\ldots,0)\in (\Z_{\ge
  0})^m$, where $1$ is in the $i$-th place, by $\widehat{(i)}$. One
can compare multindices $(r),(s)\in (\Z_{\ge 0})^m$. We say that
$(s)\le (r)$ if,
for every $i\in\{1,2,\ldots,m\}$, we have $s_i\le r_i$.

The space of all decreasing sequences $\{a_i\}_{i\in\N}$ such that
$a_i\in \R_{>0}$ and $\lim_{n\to\infty} a_n=0$ is denoted by
$\mathbf{c}^{\downarrow}_{0}(\Z_{\ge 0};\R_{>0})$.

For the space $\R^n$, we define the Euclidean norm $\|.\|_{\R^n}:\mathbb{R}^n\to \R$ as
\begin{equation*}
\|\mathbf{v}\|_{\R^n}=\left(v_1^2+v_2^2+\ldots+v_n^2\right)^{\frac{1}{2}},\qquad\forall \mathbf{v}\in\R^n.
\end{equation*}
For the space $\C^n$, we define the norm $\|.\|_{\C^n}:\C^n\to \R$ as
\begin{equation*}
\|\mathbf{v}\|_{\C^n}=\left(v_1\overline{v}_1+v_2\overline{v}_2+\ldots+v_n\overline{v}_n\right)^{\frac{1}{2}},\qquad\forall \mathbf{v}\in\C^n.
\end{equation*}

Let $M$ be an $n$-dimensional $C^{\nu}$-manifold, where $\nu\in
\{\omega,\hol\}$ and let $(U,\phi)$ be a coordinate chart on $M$. Then we define $\|.\|_{(U,\phi)}:U\to \R$ as
\begin{equation*}
\|x\|_{(U,\phi)}=\|\phi(x)\|_{\mathbb{F}^n},\qquad\forall x\in U.
\end{equation*}

Let $M$ be an $n$-dimensional $C^{\nu}$-manifold , where $\nu\in
\{\omega,\hol\}$, $(U,\phi)$ be a coordinate chart
on $M$, and $f$ be a $C^{\nu}$-function on $M$. Then, for every
multi-index $(r)$, we define $\|D^{(r)}f(x)\|_{(U,\phi)}$ as
\begin{equation*}
\|D^{(r)}f(x)\|_{(U,\phi)}=\|D^{(r)}\left(f\scirc\phi\right)(\phi^{-1}(x))\|_{\mathbb{F}},\qquad\forall
x\in U.
\end{equation*}
When the coordinate chart on $M$ is understood from the context, we usually omit the
subscript $(U,\phi)$ in the norm. 

For every $C^{\nu}$-vector field $X$ and every multi-index $(r)$, we define $\|D^{(r)}X(x)\|_{(U,\phi)}$ as
\begin{equation*}
\|D^{(r)}X(x)\|_{(U,\phi)}=\|D^{(r)}\left(T\phi\scirc X\scirc\phi^{-1}\right)(\phi(x))\|_{\mathbb{F}},\qquad\forall
x\in U.
\end{equation*}
When the coordinate chart on $M$ is understood from the context, we usually omit the
subscript $(U,\phi)$ in the norm.

In this paper, we only study holomorphic and real analytic regularity
classes. We usually denote $C^{\hol}$ for the holomorphic regularity
and $C^{\omega}$ for the real analytic regularity.  Let $M$ be a real analytic manifold, we denote the
space of real analytic functions on $M$ by $C^{\omega}(M)$ and the
space of real analytic vector fields on $M$ by
$\Gamma^{\omega}(TM)$. Similarly, for a complex manifold $M$, we
denote the space of holomorphic functions on $M$ by $C^{\hol}(M)$ and
the space of holomorphic vector fields on $M$ by $\Gamma^{\hol}(TM)$. 

We denote the Lebesgue measure on $\R$ by $\mathfrak{m}$. Let
$\mathbb{T}\subseteq \R$ be an interval. Then we denote the space of
integrable functions on $\mathbb{T}$ by  
$\mathrm{L}^1(\mathbb{T})$.
\begin{equation*}
\mathrm{L}^1(\mathbb{T})=\left\{f:\mathbb{T}\to \R\suchthat
  \int_{\mathbb{T}} |f|d\mathfrak{m}<\infty\right\}.
\end{equation*}
The space of continuous functions on $\mathbb{T}$ is denoted by
$\mathrm{C}^0(\mathbb{T})$.

Let $V$ be a locally convex space on the field $\mathbb{F}$. Then the space of all linear
continuous functionals from $V$ to $\mathbb{F}$ is the topological
dual of $V$ and is denoted by $V'$. We usually denote the space $V'$
endowed with the weak topology by $V'_{\sigma}$ and the space $V'$
endowed with the strong topology by $V'_{\beta}$.  

Let $V$ and $W$ be two locally convex spaces on the field $\mathbb{F}$. Then we
denote their tensor product by $V\otimes W$. The projective tensor
product of $V$ and $W$ is denoted by $V \otimes_{\pi} W$ and the
injective tensor product of $V$ and $W$ is denoted by
$V\otimes_{\epsilon}W$. The completion of vector spaces $V
\otimes_{\pi} W$ and $V\otimes_{\epsilon}W$ are denoted by
$V \widehat{\otimes}_{\pi}W$ and $V \widehat{\otimes}_{\epsilon}W$, respectively.

Let $\Lambda$ be a set. A binary relation $\succeq$ 
\textbf{directs} $\Lambda$ if
\begin{enumerate}
\item for every $i,j,k\in \Lambda$, $i\succeq j$ and $j\succeq k$
  implies $i\succeq k$,
\item for every $i\in \Lambda$, we have $i\succeq i$,
\item for every $i,j\in \Lambda$, there exists $m\in \Lambda$ such
  that $m\succeq i$ and $m\succeq j$.
\end{enumerate}
A \textbf{directed set} is a pair $(\Lambda,\succeq)$ such that
$\succeq$ directs $\Lambda$.

Let $\Lambda$ be a directed set and $\{V_{\alpha}\}_{\alpha\in
  \Lambda}$ be a family of objects indexed by the elements in the set
$\Lambda$ and, for every $\alpha,\beta\in \Lambda$ such that
$\alpha\succeq\beta$, there exists a morphism $f_{\alpha,\beta}:V_{\alpha}\to
V_{\beta}$ such that 
\begin{enumerate}
\item $f_{\alpha,\alpha}=\mathrm{id}$, for every $\alpha\in \Lambda$, and
\item $f_{\alpha,\gamma}=f_{\beta,\gamma}\scirc f_{\alpha,\beta}$, for
  every $\alpha\succeq\beta\succeq\gamma$.
\end{enumerate}
Then, the pair $(V_{\alpha},\{f_{\alpha,\beta}\})$ is called an
inductive family of objects. 

Let $(V_{\alpha},\{f_{\alpha,\beta}\})$ be an
inductive family of
objects. Then we denote the inductive limit of
$(V_{\alpha},\{f_{\alpha,\beta}\})$ by 
\begin{equation*}
\varinjlim V_{\alpha}
\end{equation*}

Let $\Lambda$ be a directed set and $\{V_{\alpha}\}_{\alpha\in
  \Lambda}$ be a family of objects indexed by the elements in the set
$\Lambda$ and, for every $\alpha,\beta\in \Lambda$ such that
$\alpha\succeq\beta$, there exists a morphism $f_{\alpha,\beta}:V_{\beta}\to
V_{\alpha}$ such that 
\begin{enumerate}
\item $f_{\alpha,\alpha}=\mathrm{id}$, for every $\alpha\in \Lambda$, and
\item $f_{\alpha,\gamma}=f_{\alpha,\beta} \scirc f_{\beta,\gamma}$, for
  every $\alpha\succeq\beta\succeq\gamma$.
\end{enumerate}
Then, the pair $(V_{\alpha},\{f_{\alpha,\beta}\})$ is called a
projective family of objects. 

Let $(V_{\alpha},\{f_{\alpha,\beta}\})$ be a projective family of
objects. Then we denote the projective limit of
$(V_{\alpha},\{f_{\alpha,\beta}\})$ by 
\begin{equation*}
\varprojlim V_{\alpha}
\end{equation*}

\section{Holomorphic extension of real analytic mappings}\label{sec:3}

In this section, we review some of the well-known results about
extension of ``time-invariant'' real analytic functions and vector
fields. Since every real analytic mapping is defined on a real
analytic manifold, the first step for studying holomorphic extensions of such
mappings is to extend the underlying real analytic manifold to a
complex manifold. We start with definition of totally real
submanifolds of complex manifolds.

\begin{definition}
Let $M$ be a complex manifold with an almost complex structure $J$. A
submanifold $N$ of $M$ is called \textbf{a totally real submanifold} if, for every $p\in N$,
we have $J(T_{p}N)\bigcap T_pN=\{0\}$. 
\end{definition}

It can be shown that, for every real analytic manifold $M$, there exists a
complex manifold $M^{\C}$ which contains $M$ as a totally real
submanifold \cite{HW-FB:1959}.
\begin{theorem}
Let $M$ be a real analytic manifold. There exists a complex
manifold $M^{\C}$ such that
$\mathrm{dim}_{\C}M^{\C}=\mathrm{dim}_{\R}M$ and $M$ is a totally
real submanifold of $M^{\C}$.
\end{theorem}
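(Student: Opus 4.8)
The plan is to build $M^{\C}$ by complexifying the charts of a real analytic atlas and gluing the pieces along holomorphically extended transition maps. Fix a real analytic atlas $\{(U_i,\phi_i)\}_{i\in I}$ on $M$ with $\phi_i:U_i\to\Omega_i\subseteq\R^n$, where $n=\dim_{\R}M$. Regarding $\R^n$ as the set of real points of $\C^n$, each $\Omega_i$ is an open subset of $\R^n$, and I would choose an open neighbourhood $\widetilde\Omega_i\subseteq\C^n$ of $\Omega_i$. The transition maps $\phi_{ij}=\phi_i\scirc\phi_j^{-1}$ are real analytic diffeomorphisms between open subsets of $\R^n$, so by the standard holomorphic extension of real analytic mappings each $\phi_{ij}$ extends to a holomorphic map $\widetilde\phi_{ij}$ defined on a neighbourhood of $\phi_j(U_i\cap U_j)$ in $\C^n$.

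Since $\phi_{ij}$ is a diffeomorphism, its real Jacobian is everywhere invertible along $\R^n$; the complex Jacobian of $\widetilde\phi_{ij}$ agrees with it on the real points and is therefore invertible on a possibly smaller neighbourhood, so after shrinking $\widetilde\phi_{ij}$ is a biholomorphism onto its image. The cocycle identity $\widetilde\phi_{ik}=\widetilde\phi_{jk}\scirc\widetilde\phi_{ij}$ holds on the real points $\phi_j(U_i\cap U_j\cap U_k)$, hence, by the identity theorem for holomorphic functions, it holds on the connected neighbourhood where both sides are defined. After a coherent shrinking of the neighbourhoods $\widetilde\Omega_i$ — choosing $\widetilde\Omega_{ij}\subseteq\widetilde\Omega_j$ on which $\widetilde\phi_{ij}$ is defined, biholomorphic, and compatible with the cocycle conditions — I would glue the family $\{\widetilde\Omega_i\}$ along the maps $\{\widetilde\phi_{ij}\}$. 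The result carries a natural complex structure of complex dimension $n$, and the inclusions $\Omega_i\hookrightarrow\widetilde\Omega_i$ assemble to an embedding of $M$ into the glued space.

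The main obstacle is ensuring that the glued space is \emph{Hausdorff}: naive gluing of complex charts can identify distinct boundary points and destroy the Hausdorff property. I would handle this by working with a locally finite countable atlas and performing a diagonal shrinking of the $\widetilde\Omega_i$ controlled by a metric on $M$, so that points lying over distinct points of $M$ are separated by disjoint chart neighbourhoods; alternatively one may invoke the Grauert--Morrey embedding of $M$ as a closed real analytic submanifold of some $\R^N$ and realise $M^{\C}$ as a complex submanifold of a neighbourhood of $M$ in $\C^N$, which automatically inherits the Hausdorff property from $\C^N$. This is precisely the content of the Whitney--Bruhat complexification \cite{HW-FB:1959}.

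Finally, to verify that $M$ is a totally real submanifold of $M^{\C}$, it suffices to check the condition in a single chart, since biholomorphisms preserve the almost complex structure $J$. In the chart $\widetilde\Omega_i\subseteq\C^n$ with the standard complex structure, $M$ corresponds to $\Omega_i\subseteq\R^n$, and $J$ acts on the tangent space $T_p\R^n\cong\R^n$ by multiplication by $i$, so $J(T_p\R^n)=i\R^n$ and $i\R^n\cap\R^n=\{0\}$. Hence $J(T_pM)\cap T_pM=\{0\}$ for every $p\in M$, and $\dim_{\C}M^{\C}=n=\dim_{\R}M$, completing the construction.
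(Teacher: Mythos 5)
Your construction is correct and is essentially the same approach as the paper's, in the sense that the paper offers no proof at all for this theorem but simply cites the Whitney--Bruhat complexification \cite{HW-FB:1959}, and what you have written out (complexified charts, holomorphically extended transition maps, the identity theorem for the cocycle relations, and a shrinking argument to preserve the Hausdorff property) is precisely the construction behind that citation. One minor caveat: your alternative fix for the Hausdorff issue via Grauert's embedding theorem is circular as a matter of logic, since Grauert's proof itself uses the existence of a complexification, but your primary gluing route does not rely on it and stands on its own.
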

The complex manifold $M^{\C}$ is called a \textbf{complexification}
of the real analytic manifold $M$.

Now that we can extended the real analytic manifolds to a complex manifold, it
is time to study holomorphic extensions of real analytic mappings on
the complexification of their domains. One can show that every real analytic function (vector field) on
$M$ can be extended to a holomorphic function (vector field) on some complexification
of $M$.
\begin{theorem}
Let $M$ be a real analytic manifold and $X:M\to TM$ be a real analytic
vector field on $M$. Then there
exists a complexification of $M$ denoted by $M^{\C}$ and a
holomorphic vector field $\overline{X}:M^{\C}\to TM^{\C}$ such
that 
\begin{equation*}
X(x)=\overline{X}(x),\qquad\forall x\in M.
\end{equation*}
\end{theorem}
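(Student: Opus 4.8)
The plan is to reduce the global statement to a local holomorphic extension of the real analytic component functions of $X$, and then to glue the local extensions into a single holomorphic vector field using the identity theorem for the totally real submanifold $M$. First I would invoke the preceding theorem to fix a complexification $M^{\C}$ of $M$, so that $\dim_{\C} M^{\C} = \dim_{\R} M = n$ and $M$ sits inside $M^{\C}$ as a totally real submanifold. By the construction of the complexification, every real analytic chart $(U,\phi)$ on $M$ extends to a holomorphic chart $(U^{\C},\phi^{\C})$ on $M^{\C}$ with $\phi^{\C}\colon U^{\C}\to\C^n$ extending $\phi\colon U\to\R^n$ and $U^{\C}\cap M=U$.

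Next comes the local extension. In such a chart, $X$ is represented by real analytic component functions $X^1,\ldots,X^n\colon\phi(U)\to\R$. Each $X^i$ is given near every point by a convergent Taylor series; regarding the real variables as complex and retaining the same coefficients, the series converges on a complex polydisc and there defines a holomorphic function. These local power series agree on real overlaps, hence by the identity theorem on complex overlaps, and so they patch to a holomorphic extension $\overline{X^i}$ on some open neighbourhood $\Omega_U\subseteq\C^n$ of $\phi(U)$. Pulling back through $\phi^{\C}$ then yields a holomorphic vector field $\overline{X}_U$ on the open set $(\phi^{\C})^{-1}(\Omega_U)\subseteq U^{\C}$ that restricts to $X$ on $U$.

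The main obstacle is to glue these chart-wise extensions into a single global object. Covering $M$ by real analytic charts $\{U_\alpha\}$ produces extensions $\overline{X}_\alpha$ on complex neighbourhoods $V_\alpha$ of $U_\alpha$ in $M^{\C}$. On an overlap $V_\alpha\cap V_\beta$, both $\overline{X}_\alpha$ and $\overline{X}_\beta$ are holomorphic and agree on $M\cap V_\alpha\cap V_\beta$. The heart of the argument is that a holomorphic section vanishing on the totally real submanifold $M$ of maximal real dimension must vanish on a neighbourhood: in the local model where $M$ corresponds to $\R^n\subseteq\C^n$, a holomorphic function vanishing on $\R^n$ has all Taylor coefficients equal to the (real and vanishing) real derivatives, hence is identically zero. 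Consequently $\overline{X}_\alpha=\overline{X}_\beta$ on every connected component of $V_\alpha\cap V_\beta$ that meets $M$.

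Finally I would shrink the neighbourhoods so that every overlap is controlled: choosing each $V_\alpha$ to be a sufficiently thin tubular neighbourhood of $U_\alpha$ guarantees that each connected component of $V_\alpha\cap V_\beta$ meets $M$, so the local extensions agree wherever both are defined. The union $\widetilde{M}^{\C}=\bigcup_\alpha V_\alpha$ is then again a complexification of $M$, and the $\overline{X}_\alpha$ patch to a single holomorphic vector field $\overline{X}$ on $\widetilde{M}^{\C}$ with $\overline{X}(x)=X(x)$ for all $x\in M$. I expect the thin-tube shrinking step, ensuring that overlap components meet $M$ and invoking paracompactness of $M$ to keep the cover locally finite, to be the only delicate point; the existence of the local extensions and their agreement on overlaps are routine consequences of power series convergence and the identity theorem.
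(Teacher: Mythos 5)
The paper never proves this theorem: Section~3 is a review section, the statement is presented as classical, and the only supporting reference is to Whitney and Bruhat \cite{HW-FB:1959} for the existence of the complexification. So your proposal cannot be compared against an in-paper argument; it has to stand on its own. Its local ingredients do stand: extending each component $X^i$ by its complexified Taylor series, transporting this through a holomorphic thickening of a real analytic chart, and the identity theorem in the form ``a holomorphic function vanishing on a totally real submanifold of maximal real dimension vanishes on a neighbourhood, hence on the connected component'' are all correct and standard, as is the conclusion that $\overline{X}_\alpha=\overline{X}_\beta$ on every component of $V_\alpha\cap V_\beta$ that meets $M$.

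The step that does not stand as written is the assertion that choosing each $V_\alpha$ ``sufficiently thin'' guarantees that every connected component of $V_\alpha\cap V_\beta$ meets $M$. Thinness alone does not give this: if $V_\alpha$ is fibred by discs constructed in chart $\alpha$'s coordinates and $V_\beta$ by discs constructed in chart $\beta$'s, the two fibrations differ away from $M$, a $\beta$-fibre through a point of $V_\alpha\cap V_\beta$ may exit $V_\alpha$ before reaching $M$ (this is acute near the ends of $U_\alpha\cap U_\beta$), and when $M$ comes close to itself in $M^{\C}$ two thin tubes over disjoint pieces of $M$ can intersect in components lying entirely off $M$. The standard repair is to impose one fibration on all the tubes: fix a single tubular-neighbourhood retraction $r\colon W\to M$ of $M$ in $M^{\C}$ with a fibre radius function $\rho$, and set $V_\alpha=\{z\in r^{-1}(U_\alpha)\mid \rho(z)<\epsilon_\alpha(r(z))\}$, where $\epsilon_\alpha>0$ is a continuous function small enough that $V_\alpha$ lies inside the domain of the chartwise extension $\overline{X}_\alpha$ (such a function exists because the maximal admissible radius is positive and lower semicontinuous on $U_\alpha$). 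Then $V_\alpha\cap M=U_\alpha$, and every $z\in V_\alpha\cap V_\beta$ is joined to $r(z)\in U_\alpha\cap U_\beta\subseteq M$ by its fibre segment, which remains in $V_\alpha\cap V_\beta$; hence every component of every overlap meets $M$, and your identity-theorem gluing finishes the proof on the complexification $\bigcup_\alpha V_\alpha$. With this replacement your argument is complete; without it, the thin-tube claim is precisely the gap, since it is the one assertion carrying the global content of the theorem.
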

The vector field $\overline{X}$ is called a \textbf{holomorphic extension} of the vector field $X$.

\section{Real analytic vector fields as derivations on $C^{\omega}(M)$}\label{sec:4}

In this section, we characterize real analytic vector fields as
derivations on the $\R$-algebra $C^{\omega}(M)$. We will see that this
characterization plays an important role in studying flows of
time-varying vector fields.

Let $M$ be a real analytic manifold and let $X:M\to TM$ be a real
analytic vector field
on $M$. Then we define the corresponding linear map $\widehat{X}:C^{\omega}(M)\to C^{\omega}(M)$ as
\begin{equation*}
\widehat{X}(f)=df(X),\qquad\forall f\in C^{\omega}(M).
\end{equation*}
Using the Leibniz rule, this linear map can be shown to be a derivation on the $\mathbb{R}$-algebra $C^{\omega}(M)$.

More interestingly, one can show there is a one-to one
correspondence between $C^{\omega}$-vector fields on $M$ and derivations
on the $\mathbb{R}$-algebra $C^{\omega}(M)$. 

\begin{theorem}\label{th:9}
Let $M$ be a real analytic manifold. If $X$ is a real analytic vector field, then $\widehat{X}$ is a derivation
on the $\mathbb{R}$-algebra $C^{\omega}(M)$. Moreover, for every
derivation $D:C^{\omega}(M)\to C^{\omega}(M)$, there exists a $C^{\omega}$-vector field $X$ such that $\widehat{X}=D$.
\end{theorem}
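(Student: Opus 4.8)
The plan is to prove the two directions of the correspondence separately. The first direction---that $\widehat{X}$ is a derivation---follows immediately from the Leibniz rule for the differential, as already observed in the text preceding the statement, so I would only remark that $\widehat{X}(fg) = d(fg)(X) = f\,dg(X) + g\,df(X) = f\,\widehat{X}(g) + g\,\widehat{X}(f)$, together with $\R$-linearity of $f \mapsto df(X)$. The substance of the theorem is the surjectivity: given an arbitrary derivation $D$ of the $\R$-algebra $C^{\omega}(M)$, I must manufacture a real analytic vector field $X$ with $\widehat{X} = D$.

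The natural approach is to build $X$ pointwise and then verify it is a well-defined real analytic vector field. First I would fix a point $p \in M$ and define a candidate tangent vector $X_p \in T_p M$ by the rule $X_p(f) = (D f)(p)$ for $f \in C^{\omega}(M)$. The key algebraic fact to establish is that $X_p$ so defined is a \emph{point derivation} at $p$, i.e.\ that it is $\R$-linear and satisfies $X_p(fg) = f(p) X_p(g) + g(p) X_p(f)$; both follow directly from the corresponding properties of $D$ evaluated at $p$. Since tangent vectors at $p$ on a $C^{\omega}$-manifold are in bijection with point derivations of $C^{\omega}(M)$ at $p$, this identifies $X_p$ with a genuine element of $T_p M$. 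Doing this at every point produces a map $X : M \to TM$ with $X(p) = X_p$, and by construction $\widehat{X}(f)(p) = d f(X_p) = X_p(f) = (Df)(p)$, so $\widehat{X} = D$ as maps into $C^{\omega}(M)$, \emph{provided} $X$ is actually a real analytic vector field.

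The hard part, and the step I expect to be the main obstacle, is precisely this regularity and well-definedness issue. Two points need care. The first is that point derivations at $p$ on $C^{\omega}(M)$ really do coincide with $T_p M$---this is where the real analytic category differs from the smooth one and where the cited result of \cite{JG:1981} does the essential work, since the usual smooth proof (via bump functions and Hadamard's lemma) is unavailable: there are no nontrivial compactly supported real analytic functions, so one cannot localize naively. The second is showing that $p \mapsto X_p$ is real analytic, not merely a set-theoretic section of $TM$. For this I would work in a real analytic coordinate chart $(U,\phi)$ with coordinates $x_1, \ldots, x_n$, write the coordinate functions (which are real analytic on $U$) and set $a_i = D x_i$, so that in these coordinates $X = \sum_{i=1}^n a_i \frac{\partial}{\partial x_i}$; the coefficients $a_i = D x_i$ are real analytic because $D$ maps $C^{\omega}(M)$ into $C^{\omega}(M)$. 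One must then check that the locally defined $X$ really acts as $D$ on \emph{all} of $C^{\omega}(M)$ and not just on coordinate functions, which again reduces to the structure theory of derivations supplied by \cite{JG:1981}.

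I would close by assembling these pieces: uniqueness of $X$ is automatic since $\widehat{X} = \widehat{Y}$ forces $X$ and $Y$ to induce the same point derivation at every $p$, hence $X = Y$; and combining the derivation property of $\widehat{X}$ with the surjectivity just established yields the claimed bijection between $\Gamma^{\omega}(TM)$ and $\Der{M}{\omega}$. The entire argument hinges on the real analytic identification of abstract algebra derivations with vector fields, so I would be explicit that the smooth-case techniques must be replaced by the results cited from \cite{JG:1981} and \cite{AEM:1952}.
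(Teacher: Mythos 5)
Your proposal and the paper's proof are, in substance, the same: the paper's entire proof is the one-line citation to \cite[Theorem 4.1]{JG:1981}, and your sketch likewise defers both genuinely hard steps (the identification of point derivations of $C^{\omega}(M)$ at $p$ with $T_pM$, and the verification that the locally reconstructed field acts as $D$ on all of $C^{\omega}(M)$) to that same reference; the Leibniz-rule direction is immediate in both treatments. So you have not produced an independent argument, but neither does the paper, and the scaffolding you wrap around the citation is the standard reduction.

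One concrete slip in that scaffolding is worth flagging, because it occurs exactly where the real analytic category bites. You set $a_i = Dx_i$ for the coordinate functions $x_1,\ldots,x_n$ of a chart $(U,\phi)$. But $x_i \in C^{\omega}(U)$, not $C^{\omega}(M)$, and $D$ is defined only on global functions; in the analytic category there is no cutoff by which to extend $x_i$, so $Dx_i$ is meaningless as written. This is the very localization obstruction you correctly identified when discussing point derivations, and your coordinate step then runs into it anyway. The repair is the device the paper itself uses in the proofs of Theorems \ref{th:10} and \ref{th:1}: by Grauert's embedding theorem $M$ embeds real analytically into some $\R^N$, the restrictions $\widehat{x}_1,\ldots,\widehat{x}_N$ of the ambient coordinates are global elements of $C^{\omega}(M)$ whose differentials span every cotangent space, so near any $p$ a suitable subset of them forms a chart, and one takes $a_i = D\widehat{x}_i$ for those. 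With that substitution your outline is sound, and everything that remains --- locality of $D$, and that its values on the $\widehat{x}_i$ determine it on all of $C^{\omega}(M)$ --- is precisely the content of \cite[Theorem 4.1]{JG:1981}, which both you and the paper invoke rather than prove.
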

\begin{proof}
The sketch of proof is given in \cite[Theorem 4.1]{JG:1981}
\end{proof}

\section{Real analytic maps as unital $\R$-algebra homomorphism on $C^{\omega}(M)$}\label{sec:4'}

In this section, we characterize real analytic mappings as unital
$\R$-algebra homomorphisms on $C^{\omega}(M)$.

Let $\phi:M\to N$ be a real analytic map. Then we can define
the associated map $\widehat{\phi}:C^{\omega}(N)\to C^{\omega}(M)$ as 
\begin{equation*}
\widehat{\phi}(f)=f\scirc\phi.
\end{equation*} 
It is easy to see that $\widehat{\phi}$ is an $\mathbb{R}$-algebra
homomorphism. For every $x\in M$, one can define the unital\/ $\mathbb{R}$-algebra homomorphism
$\mathrm{ev}_x:C^{\omega}(M)\to\mathbb{R}$ as
\begin{equation*}
\mathrm{ev}_x(f)=f(x).
\end{equation*}
The map $\mathrm{ev}_x$ is called the evaluation map at $x\in
M$. The evaluation map plays an essential role in
characterizing unital\/ $\mathbb{F}$-algebra homomorphisms. The following result is of significant importance. 

\begin{theorem}\label{th:10}
Let $M$ be a real analytic manifold. Let $\phi:C^{\omega}(M)\to \mathbb{R}$ be a nonzero and unital\/ $\mathbb{R}$-algebra homomorphism. Then
there exists $x\in M$ such that $\phi=\mathrm{ev}_{x}$.
\end{theorem}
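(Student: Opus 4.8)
The plan is to locate the point $x\in M$ explicitly as the image under $\phi$ of a globally defined system of real analytic ``coordinate'' functions, and then to pin $\phi$ down using the single algebraic fact that a nowhere-vanishing element of $C^{\omega}(M)$ is a unit. Indeed, if $u\in C^{\omega}(M)$ never vanishes, then $1/u$ is again real analytic, so $u$ is invertible; since $\phi$ is a unital homomorphism, $\phi(u)\phi(1/u)=\phi(1)=1$, and hence $\phi(u)\ne 0$. Note also that $\phi$ is surjective onto $\R$ (it fixes the constants, as $\phi(c\cdot 1)=c$), so $\ker\phi$ is an ideal of codimension one. The entire argument will consist in showing that $\ker\phi$ coincides with the maximal ideal $\mathcal{I}_a=\{f\in C^{\omega}(M)\suchthat f(a)=0\}$ of a suitable point $a\in M$.

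To produce the candidate point, I would first invoke Grauert's real analytic embedding theorem to realize $M$ as a closed real analytic submanifold of some $\R^N$. The restrictions $x_1,\dots,x_N$ of the Euclidean coordinate functions then lie in $C^{\omega}(M)$, and I set $a_i=\phi(x_i)$ and $a=(a_1,\dots,a_N)\in\R^N$. Consider the real analytic function
\[
r=\sum_{i=1}^N (x_i-a_i)^2\in C^{\omega}(M).
\]
Using additivity and multiplicativity of $\phi$ together with $\phi(x_i-a_i)=\phi(x_i)-a_i=0$, one computes $\phi(r)=\sum_{i=1}^N\phi(x_i-a_i)^2=0$. I would then argue that $a\in M$: if instead $a\notin M$, then, because $M$ is closed in $\R^N$, the function $r$ is nowhere zero on $M$, hence a unit, so $\phi(r)\ne 0$, contradicting $\phi(r)=0$.

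With $a\in M$ in hand, it remains to show $\ker\phi=\mathcal{I}_a$. Let $h\in\ker\phi$ and suppose, for contradiction, that $h(a)\ne 0$. Form $u=h^2+r$. On the one hand, $\phi(u)=\phi(h)^2+\phi(r)=0$. On the other hand, $u$ is strictly positive on all of $M$: if $x\ne a$ then $r(x)=\norm{x-a}^2>0$, while at $x=a$ we have $r(a)=0$ but $h(a)^2>0$. Thus $u$ is nowhere vanishing, hence a unit, forcing $\phi(u)\ne 0$, a contradiction. Therefore every element of $\ker\phi$ vanishes at $a$, i.e.\ $\ker\phi\subseteq\mathcal{I}_a$. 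Since $\ker\phi$ has codimension one and $\mathcal{I}_a$ is a proper ideal, the two coincide. Finally, for arbitrary $f\in C^{\omega}(M)$ the element $f-f(a)\cdot 1$ lies in $\mathcal{I}_a=\ker\phi$, whence $\phi(f)=f(a)$, that is, $\phi=\mathrm{ev}_a$.

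The step I expect to be the genuine obstacle, and the reason this is not a purely formal algebra exercise, is the passage to globally defined separating functions: nothing is assumed about continuity of $\phi$, so I cannot approximate and must instead exhibit concrete witnesses inside $C^{\omega}(M)$. This is exactly where Grauert's proper embedding is essential, both to supply the coordinate functions $x_i$ and, through the closedness of the image, to guarantee that the candidate point $a$ actually lies in $M$. The positivity-and-unit trick with $h^2+r$ then does the work that a partition of unity would do in the smooth category, which is unavailable in the real analytic setting; it is precisely the substitute that makes the argument go through.
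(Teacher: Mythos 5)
Your proof is correct and is essentially the paper's own argument: both rest on Grauert's embedding to supply global real analytic coordinate functions, define the candidate point by $a_i=\phi(x_i)$, and derive the contradiction from the fact that a nowhere-vanishing sum of squares in $C^{\omega}(M)$ is a unit, hence cannot lie in $\ker\phi$. The paper merely packages this trick as a lemma asserting that finitely many elements of $\ker\phi$ have a common zero (your $h^2+r$ is exactly the sum of squares that lemma forms when applied to $h$ together with the shifted coordinate functions) and closes by maximality of the two ideals, where you instead use the codimension-one argument.
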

\begin{proof}

For the case when $M$ and $N$ are open subsets of an Euclidean space,
the proof for this theorem is given in \cite[Theorem 2.1]{PD-ML:2003}. However, it seems that
this proof cannot be generalized to include the general real analytic
manifolds. Using the techniques and ideas in \cite[Proposition 12.5]{AEM:1952}, we
present a proof of this theorem for the general case. Let
$\phi:C^{\omega}(M)\to \R$ be a unital\/ $\R$-algebra homomorphism. It is easy
to see that $\mathrm{Ker}(\phi)$ is a maximal ideal in $C^{\omega}(M)$. For
every $f\in C^{\omega}(M)$, we define
\begin{equation*}
Z(f)=\{x\in M\mid f(x)=0\}.
\end{equation*} 
\begin{lemma*}\label{lem:3}
Let $n\in \N$ and $f_1,f_2,\ldots,f_n\in \mathrm{Ker}(\phi)$. Then we
have 
\begin{equation*}
\bigcap_{i=1}^{n} Z(f_i)\ne\emptyset.
\end{equation*}
\end{lemma*}
\begin{proof}
Suppose that we have 
\begin{equation*}
\bigcap_{i=1}^{n} Z(f_i)=\emptyset.
\end{equation*}
Then we can define a function $g\in C^{\omega}(M)$ as
\begin{equation*}
g(x)=\frac{1}{(\sum_{i=1}^{n} (f_i(x))^2)},\qquad\forall x\in M.
\end{equation*}
Then it is clear that we have
\begin{equation*}
\left(\sum_{i=1}^{n} (f_i)^2\right)(g)=\mathbf{1},
\end{equation*}
where $\mathbf{1}:C^{\nu}(M)\to \mathbb{F}$ is a unital\/
$\mathbb{F}$-algebra homomorphism defined as
\begin{equation*}
\mathbf{1}(f)=1.
\end{equation*} 
Since $\mathrm{Ker}(\phi)$ is an ideal in $C^{\omega}(M)$, we have
$\mathbf{1}\in \mathrm{Ker}(\phi)$. This implies that $\phi=0$\@, which
is a contradiction of $\phi$ being unital.
\end{proof}

Since $M$ is a real analytic manifold, there exists a $C^{\omega}$-embedding of $M$ into some $\R^N$ (one can use Grauert's embedding theorem with $N=4n+2$). Let $x_1,x_2,\ldots,x_N$ be the standard
coordinate functions on $\R^N$ and
$\widehat{x}_1,\widehat{x}_2,\ldots,\widehat{x}_N$ be their
restrictions to $M$. Now, for every $i\in \{1,2,\ldots,N\}$, consider the functions
$\widehat{x}_i-\phi(\widehat{x}_i)\mathbf{1}\in C^{\omega}(M)$. It is
easy to see that
\begin{equation*}
\phi(\widehat{x}_i-\phi(\widehat{x}_i)\mathbf{1})=\phi(\widehat{x}_i)-\phi(\widehat{x}_i)\phi(\mathbf{1})=0,\qquad\forall i\in \{1,2,\ldots,N\}.
\end{equation*}
This implies that, for every $i\in \{1,2,\ldots,N\}$, we have
$\widehat{x}_i-\phi(\widehat{x}_i)\mathbf{1}\in
\mathrm{Ker}(\phi)$. So, by the above
Lemma, we get
\begin{equation*}
\bigcap_{i=1}^{N} Z(\widehat{x}_i-\phi(\widehat{x}_i)\mathbf{1})\ne \emptyset. 
\end{equation*}
Since $x_1,x_2,\ldots,x_N$ are coordinate functions, it is easy to see that
$\bigcap_{i=1}^{N} Z(\widehat{x}_i-\phi(\widehat{x}_i)\mathbf{1})$ is
just a one-point set. So we set $\bigcap_{i=1}^{N}
Z(\widehat{x}_i-\phi(\widehat{x}_i)\mathbf{1})=\{x\}$. 

Now we proceed to prove the theorem. 
Note that, for every $f\in \mathrm{Ker}(\phi)$,
we have
\begin{equation*}
Z(f)\cap\{x\}=Z(f)\bigcap \left(\cap_{i=1}^{N}
Z(\widehat{x}_i-\phi(\widehat{x}_i)\mathbf{1})\right).
\end{equation*}
So, by the above Lemma, we have 
\begin{equation*}
Z(f)\cap\{x\}\ne \emptyset,\qquad\forall f\in \mathrm{Ker}(\phi).
\end{equation*}
This implies that 
\begin{equation*}
\{x\}\subseteq Z(f),\qquad\forall f\in \mathrm{Ker}(\phi).
\end{equation*}
This means that 
\begin{equation*}
\{x\}\subseteq \bigcap_{f\in \mathrm{Ker}(\phi)}Z(f).
\end{equation*}
This implies that $\mathrm{Ker}(\phi)\subseteq \mathrm{Ker}(\mathrm{ev}_x)$. Since $\mathrm{Ker}(\mathrm{ev}_x)$ and
$\mathrm{Ker}(\phi)$ are both maximal ideals, we have
\begin{equation*}
\mathrm{Ker}(\mathrm{ev}_x)=\mathrm{Ker}(\phi).
\end{equation*}
Now let $f\in C^{\omega}(M)$, so we have $f-f(x)\mathbf{1}\in
\mathrm{Ker}(\phi)$. This implies that
\begin{equation*}
0=\phi(f-f(x)\mathbf{1})=\phi(f)-f(x).
\end{equation*}
So, for every $f\in C^{\omega}(M)$,
\begin{equation*}
\phi(f)=f(x).
\end{equation*}
Therefore, we have $\phi=\mathrm{ev}_x$.
\end{proof}

\begin{theorem}\label{th:1}
Let $M$ and $N$ be real analytic manifolds. Then, for every $\mathbb{R}$-algebra map $A:C^{\omega}(M)\to C^{\omega}(N)$, there exists
a real analytic map $\phi:N\to M$ such that 
\begin{equation*}
\widehat{\phi}=A.
\end{equation*}
\end{theorem}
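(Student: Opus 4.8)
The plan is to construct $\phi$ pointwise by using Theorem \ref{th:10} to recognize each ``evaluation-along-$A$'' functional as an evaluation map, and then to upgrade the resulting set map to a real analytic one through a real analytic embedding of $M$. I assume throughout that $A$ is unital, i.e. $A(\mathbf{1})=\mathbf{1}$; this is the natural hypothesis under which the composites below fall within the scope of Theorem \ref{th:10}. First I would fix $y \in N$ and consider $\mathrm{ev}_y \scirc A : C^{\omega}(M) \to \R$. Since $A$ is an $\R$-algebra homomorphism and $\mathrm{ev}_y$ is a unital one, the composite is again an $\R$-algebra homomorphism, and it is unital and nonzero because $(\mathrm{ev}_y \scirc A)(\mathbf{1}) = \mathrm{ev}_y(A(\mathbf{1})) = \mathrm{ev}_y(\mathbf{1}) = 1$. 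By Theorem \ref{th:10} there is a point of $M$ at which $\mathrm{ev}_y \scirc A$ is evaluation, and this point is unique because real analytic functions separate points of $M$ (through the coordinate functions of an embedding, so $\mathrm{ev}_x = \mathrm{ev}_{x'}$ forces $x = x'$). I define $\phi(y)$ to be this unique point, so that $\mathrm{ev}_y \scirc A = \mathrm{ev}_{\phi(y)}$ for all $y \in N$.

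With $\phi:N\to M$ defined set-theoretically, the identity $\widehat{\phi}=A$ is then a formal computation: for every $f \in C^{\omega}(M)$ and every $y \in N$,
\begin{equation*}
(A f)(y) = \mathrm{ev}_y(Af) = (\mathrm{ev}_y \scirc A)(f) = \mathrm{ev}_{\phi(y)}(f) = f(\phi(y)) = (\widehat{\phi}(f))(y),
\end{equation*}
so $Af = f\scirc \phi$ for all $f$, that is, $A = \widehat{\phi}$. It remains only to see that $\phi$ is real analytic.

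For the regularity I would invoke a real analytic embedding $\iota = (\widehat{x}_1,\ldots,\widehat{x}_N):M \to \R^N$ as in the proof of Theorem \ref{th:10} (Grauert), where the $\widehat{x}_i \in C^{\omega}(M)$ are the restricted coordinate functions. For each $i$, the function $\widehat{x}_i \scirc \phi = A(\widehat{x}_i)$ lies in $C^{\omega}(N)$ by hypothesis, so $\iota \scirc \phi : N \to \R^N$ has real analytic components and is therefore real analytic. Since $\iota$ is a homeomorphism onto its image, $\phi = \iota^{-1}\scirc(\iota\scirc\phi)$ is at least continuous, and real analyticity then follows locally: an embedded real analytic submanifold admits, near each of its points, a real analytic retraction (a left inverse of $\iota$) defined on an open subset of $\R^N$, and composing $\iota\scirc\phi$ with such a retraction recovers $\phi$ locally as a real analytic map.

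The hard part is this last step --- deducing real analyticity of $\phi$ itself from real analyticity of all the composites $f \scirc \phi$. The pointwise construction and the verification $A=\widehat{\phi}$ are purely formal once Theorem \ref{th:10} is available; the genuine content is that a map into a real analytic manifold whose components through an embedding are real analytic is itself real analytic, which rests on the existence of real analytic local left inverses for real analytic embeddings. A secondary point worth recording is the unitality hypothesis on $A$: without it the functional $\mathrm{ev}_y \scirc A$ could be the zero homomorphism, to which Theorem \ref{th:10} assigns no point, so one must either assume $A$ unital or argue separately that a nonzero $\R$-algebra map $C^{\omega}(M)\to C^{\omega}(N)$ is automatically unital.
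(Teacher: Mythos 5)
Your proposal is correct and follows essentially the same route as the paper: both define $\phi$ pointwise by applying Theorem \ref{th:10} to the composites $\mathrm{ev}_y\scirc A$, verify $\widehat{\phi}=A$ by the same formal computation, and obtain real analyticity from Grauert's embedding by recognizing the local coordinates of $\phi$ as the globally defined real analytic functions $A(\widehat{x}_i)$. Your use of local real analytic retractions is only a cosmetic variant of the paper's device of choosing chart coordinates on $M$ that extend to global real analytic functions, and your observation that $A$ must be unital matches the paper's proof, which tacitly assumes this when it calls $\mathrm{ev}_x\scirc A$ unital.
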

\begin{proof}
For every $x\in N$, consider the unital\/ $\mathbb{R}$-algebra homomorphism
$\mathrm{ev}_{x}\scirc A:C^{\omega}(M)\to \mathbb{R}$. By Theorem \ref{th:10}, there
exists $y_x\in M$ such that $\mathrm{ev}_x\scirc A=\mathrm{ev}_{y_x}$. We
define $\phi:N\to M$ as
\begin{equation*}
\phi(x)=y_x,\qquad\forall x\in N.
\end{equation*}

Let $(U,\eta=(x^1,x^2,\ldots,x^m))$ be a coordinate neighbourhood on $M$
around $y_x$. Then, by using the Grauert's embedding theorems, there exist functions
$\tilde{x}^1,\tilde{x}^2,\ldots,\tilde{x}^m$ such that, for every
$i\in \{1,2\ldots,m\}$, we have
\begin{eqnarray*}
\tilde{x}^i&\in& C^{\omega}(N),\\
\tilde{x}^i\lvert_{U}&=&x^i.
\end{eqnarray*}
Thus, for every $x\in U$, we have
\begin{equation*}
y^i_x=\mathrm{ev}_x\scirc
A(\tilde{x}^i)=A(\tilde{x}^i)(x),\qquad\forall i\in\{1,2,\ldots,m\}.
\end{equation*}
However, for every $i\in\{1,2,\ldots,m\}$, we have $A(\tilde{x}^i)\in
C^{\omega}(N)$. This implies that, for every $i\in \{1,2,\ldots,m\}$, the
function $y^i_x$ is real analytic with respect to $x$ on the
neighbourhood $U$.  Therefore, the map $\phi$ is real analytic. One can easily check that $\widehat{\phi}=A$.
\end{proof}

\section{Inductive limit of topological vector
spaces}\label{sec:5}

In this section, we introduce two important classes of inductive
limits of locally convex spaces. It
turns out that these classes play an essential role in our analysis of
extensions of time-varying real analytic vector fields

\begin{definition}
Let $\{V_i,f_{i}\}_{i\in \N}$ be an inductive family of locally convex
spaces and the pair $(V,\{g_i\}_{i\in \N})$ be
the locally convex inductive limit of $\{V_i,
f_{i}\}_{i\in \N}$. The inductive family $\{V_i, f_{i}\}_{i\in \N}$ is \textbf{regular} if, for every bounded set
$B\subset V$, there exists $m\in \N$ and a bounded set $B_m\subset
V_m$ such that the restriction map $g_m\mid_{B_m}:B_m\to V$ is a
bijection onto $B$.

The inductive family $\{V_i, f_{i}\}_{i\in \N}$ is \textbf{boundedly
  retractive} if, for every bounded set $B\subset V$, there exists
$m\in \N$ and a bounded set $B_m\subset V_m$ such that the restriction
map $g_m\mid_{B_m}:B_m\to V$ is a homeomorphism onto $B$.
\end{definition}

While most of the well-known inductive family of locally convex spaces in mathematics are
regular and/or boundedly retractive, checking
whether an inductive family is 
regular or boundedly retractive using the definitions is very difficult. However, some properties of the
connecting maps of the inductive family can ensure that the inductive
limit is regular or boundedly retractive. 

\begin{definition}
Let $\{V_i\}_{i\in \N}$ be a family of locally convex topological
vector spaces and let $\{f_{i}\}_{i\in \N}$ be a family of
continuous linear maps
such that $f_{i}:V_i\to V_{i+1}$.
\begin{enumerate} 

\item The inductive family $\{V_i, f_{i}\}_{i\in \N}$ is \textbf{compact} if, for every $i\in \N$,
the map $f_{i}:V_i\to V_{i+1}$ is compact.

\item The inductive family $\{V_i, f_{i}\}_{i\in \N}$ is \textbf{weakly compact} if, for every $i\in \N$,
the map $f_{i}:V_i\to V_{i+1}$ is weakly compact.
\end{enumerate}
\end{definition}

In order to study the compactness (weak compactness) of an inductive family of locally convex spaces $\{V_i, f_{i}\}_{i\in \N}$, it is essential that one can characterize
the compact (weakly compact) subsets of locally convex vector spaces
$V_i$ for every $i\in \N$.
For a metrizable topological vector space $V$, it is well-known that a set
$K\subseteq X$ is compact if and only if every sequence in $K$ has a
convergent subsequence. However, it is possible that the weak
topology on $V$ is not metrizable. Thus it would be interesting to see
if the same characterization holds for weakly compact subsets of
$V$. Eberlein\textendash Smulian Theorem answers this question
affirmatively for Banach spaces
\cite[Chapter IV, Corollary 2]{schaefer}. 
\begin{theorem}
Let $V$ be a Banach space and $A\subseteq V$. Then the following
statements are equivalent:
\begin{enumerate}
\item[(i)] The weak closure of $A$ is weakly compact,  
\item[(ii)] each sequence of elements of $A$ has a subsequence that is
  weakly convergent. 
\end{enumerate}
\end{theorem}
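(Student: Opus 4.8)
The plan is to prove the two implications separately; together they constitute the two classical halves of the theorem, the implication (i)$\,\Rightarrow\,$(ii) being due to Eberlein and the converse (ii)$\,\Rightarrow\,$(i) to \v{S}mulian. Throughout I write $\iota:V\to V''$ for the canonical isometric embedding of $V$ into its bidual, and I recall three standard facts to be used freely: a weakly convergent sequence is norm-bounded (via the uniform boundedness principle applied in $V''$); a norm-closed convex set is weakly closed (Mazur's theorem); and the closed unit ball of $V''$ is compact in the weak-$*$ topology $\sigma(V'',V')$ (Banach--Alaoglu).

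For (i)$\,\Rightarrow\,$(ii), let $(x_n)_{n\in\N}$ be a sequence in $A$ and let $K$ be the weak closure of $A$, which is weakly compact by hypothesis. Let $W$ be the norm-closure of the linear span of $\{x_n\}_{n\in\N}$; this is a separable Banach space, and being norm-closed and convex it is weakly closed, so $K\cap W$ is weakly compact and contains every $x_n$. The key step is the metrizability of the weak topology on $K\cap W$. Since $W$ is separable, the dual ball $B_{W'}$ is weak-$*$ metrizable and weak-$*$ compact, hence weak-$*$ separable; choosing a weak-$*$ dense sequence $(f_k)_{k\in\N}$ in $B_{W'}$, the $f_k$ separate the points of $W$, and the metric $d(x,y)=\sum_{k}2^{-k}\abs{f_k(x-y)}$ is weakly continuous on bounded sets. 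Thus the identity map from $(K\cap W,\sigma(W,W'))$ to $(K\cap W,d)$ is a continuous bijection from a compact space to a Hausdorff space, hence a homeomorphism, so the weak topology on $K\cap W$ is induced by $d$. A compact metric space is sequentially compact, so $(x_n)$ has a $d$-convergent, hence weakly convergent, subsequence, proving (ii).

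For (ii)$\,\Rightarrow\,$(i), I would first note that $A$ is norm-bounded: an unbounded sequence in $A$ could admit no weakly convergent subsequence, since such subsequences are bounded, contradicting the hypothesis. Consequently $\iota(A)$ lies in a multiple of $B_{V''}$, so its weak-$*$ closure $\overline{\iota(A)}^{\,*}$ in $V''$ is weak-$*$ compact by Banach--Alaoglu. Because $\iota$ is a homeomorphism from $(V,\sigma(V,V'))$ onto its image carrying the relative $\sigma(V'',V')$ topology, it suffices to prove the containment $\overline{\iota(A)}^{\,*}\subseteq\iota(V)$; granting this, $\iota^{-1}\bigl(\overline{\iota(A)}^{\,*}\bigr)$ is a weakly compact subset of $V$ equal to the weak closure of $A$, which is (i). This containment is the main obstacle. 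To establish it, I would fix $\varphi\in\overline{\iota(A)}^{\,*}$ and build, by simultaneous induction, a sequence $(y_n)$ in $A$ and norm-one functionals $(g_n)$ in $V'$ so that each $y_n$ is weak-$*$ close to $\varphi$ on $g_1,\dots,g_n$ (possible since $\varphi$ is a weak-$*$ limit of points of $\iota(A)$), while the $g_n$ are interleaved so that $g_1,\dots,g_n$ almost norm the finite-dimensional span of the previously chosen differences $\varphi-\iota(y_1),\dots,\varphi-\iota(y_{n-1})$. By hypothesis $(y_n)$ has a subsequence converging weakly to some $y\in V$, and the interleaving forces $\varphi-\iota(y)$ to vanish on a family of functionals that norms it, whence $\varphi=\iota(y)\in\iota(V)$. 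The delicate point, and the crux of the whole argument, is arranging the alternating choices so that this final ``diagonal'' coincidence of the two limits is genuinely forced; this is exactly where the sequential hypothesis (ii) is consumed.
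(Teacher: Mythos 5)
The paper does not prove this statement at all --- it is the Eberlein--Smulian theorem, quoted from Schaefer [Chapter IV, Corollary 2] as background for Theorem \ref{th:52} --- so there is no in-paper argument to compare against, and your proposal must be judged on its own. Your direction (i)$\Rightarrow$(ii) is correct and essentially complete: the reduction to the separable closed subspace $W$, the weak-$*$ compactness and metrizability (hence separability) of $B_{W'}$, the metric $d$ inducing the weak topology on the weakly compact set $K\cap W$, and sequential compactness of compact metric spaces constitute the standard argument, and the points you pass over silently (that $\sigma(V,V')$ and $\sigma(W,W')$ agree on $W$ via Hahn--Banach, and that the series defining $d$ converges uniformly on the bounded set $K\cap W$) are genuinely routine.

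The genuine gap is in (ii)$\Rightarrow$(i). A minor, repairable slip first: ``an unbounded sequence in $A$ admits no weakly convergent subsequence'' is false as stated (in $\ell^2$ the unbounded sequence $0,e_1,0,2e_2,0,3e_3,\dots$ has a weakly convergent subsequence); you must choose $x_n\in A$ with $\|x_n\|\ge n$, so that \emph{every} subsequence is unbounded. The substantive problem is that the containment $\overline{\iota(A)}^{\,*}\subseteq\iota(V)$, which you rightly call the main obstacle, is described but never actually proved, and the description has two defects. First, your bookkeeping --- one new unit functional per stage, with $g_1,\dots,g_n$ uniformly almost norming the span of the first $n-1$ differences --- cannot in general be arranged: norming an $m$-dimensional subspace of $V''$ up to a fixed constant can require a number of functionals exponential in $m$ (for Euclidean-like spans this is a covering estimate on the sphere), which is precisely why Whitley's proof adjoins a whole finite block of functionals at each stage rather than a single one. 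Second, and decisively, the claim that the interleaving ``forces $\varphi-\iota(y)$ to vanish on a family of functionals that norms it'' conceals the two steps carrying all the weight, and neither appears in your text: (a) Mazur's theorem, applied to the weak limit $y$ of the subsequence $(y_{n_j})$, places $\varphi-\iota(y)$ in the \emph{norm}-closed linear span of the differences $\varphi-\iota(y_n)$; this is the only reason the norming inequality --- valid with a uniform constant on each finite-dimensional span, hence on the closure of their union --- applies to $\varphi-\iota(y)$ at all (you list Mazur among your tools at the outset but never invoke it here); and (b) the double-limit computation: for each fixed $g_i$ one has $(\varphi-\iota(y_n))(g_i)\to 0$ by construction, while $(\varphi-\iota(y_{n_j}))(g_i)\to(\varphi-\iota(y))(g_i)$ by weak convergence, so $\varphi-\iota(y)$ annihilates every $g_i$ and is therefore $0$ by the norming property. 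Since you explicitly defer exactly these points (``the delicate point \dots is exactly where the sequential hypothesis (ii) is consumed''), what you have for this direction is the correct strategy with an acknowledged hole at its center, not a proof.
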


One can get a partial generalization of the Eberlein\textendash Smulian Theorem for complete locally
convex spaces \cite[Chapter IV, Theorem 11.2]{schaefer}. 

\begin{theorem}\label{th:52}
Let $V$ be a complete locally convex space and $A\subseteq V$. If
every sequence of elements of $A$ has a subsequence that is weakly
convergent, then the weak closure of $A$ is weakly compact.
\end{theorem}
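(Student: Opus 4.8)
The plan is to realize the weak closure of $A$ as a compact subset of a large product space and then use completeness of $V$, via Grothendieck's completeness theorem, to show that this compact set actually lies inside $V$. The sequential hypothesis enters at exactly one place, to upgrade a pointwise limit of point-evaluations to a continuous functional.

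First I would record two preliminary reductions. The hypothesis forces $A$ to be weakly bounded: an unbounded sequence in $A$ would admit no bounded subsequence, hence no weakly convergent subsequence, since weakly convergent sequences are bounded. By Mackey's theorem weak boundedness and boundedness coincide in a locally convex space, so $A$ is bounded. Consequently, embedding $V$ into the algebraic dual $(V')^{*}$ by $x\mapsto(f\mapsto f(x))$ and equipping $(V')^{*}$ with the topology $\sigma((V')^{*},V')$ of pointwise convergence on $V'$, the closure $K:=\overline{A}^{\,\sigma((V')^{*},V')}$ is compact: for each $f\in V'$ the coordinate set $\{f(x)\mid x\in A\}$ lies in a compact disk of $\mathbb{F}$, so $K$ is a closed subset of a product of compact disks and Tychonoff's theorem applies. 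Every element of $K$ is a pointwise limit of evaluations, hence a linear functional on $V'$, i.e. a genuine element of $(V')^{*}$.

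The crux is then to prove $K\subseteq V$. Here I would invoke Grothendieck's completeness theorem: since $V$ is complete, $V$ coincides with the set of those $\xi\in(V')^{*}$ whose restriction to every equicontinuous subset $S\subseteq V'$ is continuous for $\sigma(V',V)$. Thus it suffices to fix $\xi\in K$ and an equicontinuous $S$ and to show that $\xi|_{S}$ is $\sigma(V',V)$-continuous. Since the $\sigma(V',V)$-closure of an equicontinuous set is $\sigma(V',V)$-compact by the Alaoglu\textendash Bourbaki theorem, it is enough to verify \emph{sequential} continuity of $\xi$ on $S$ and then upgrade to continuity on the compact set $\overline{S}$.

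The main obstacle \textemdash\ and the step where the sequential hypothesis does its real work \textemdash\ is this verification of sequential continuity through an interchange-of-limits argument. Given a sequence $(f_m)$ in $S$ with $f_m\to f$ in $\sigma(V',V)$ and a sequence $(x_n)$ in $A$ approximating $\xi$, I would extract, using the hypothesis, a subsequence of $(x_n)$ converging weakly to some $x\in V$, and then show that the two iterated limits $\lim_n\lim_m f_m(x_n)$ and $\lim_m\lim_n f_m(x_n)$ coincide; equicontinuity of $S$ bounds the double array uniformly and licenses the interchange. This double-limit criterion is precisely the mechanism underlying the Banach-space Eberlein\textendash Smulian theorem quoted above, and it yields $\xi(f)=\lim_m\xi(f_m)$, i.e. continuity of $\xi|_{S}$. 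Once $K\subseteq V$ is established, the topology $\sigma((V')^{*},V')$ restricts to the weak topology $\sigma(V,V')$ on $V$, so $K$ is a weakly compact subset of $V$ equal to the weak closure of $A$, which completes the proof.
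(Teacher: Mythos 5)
The paper itself gives no proof of this theorem \textemdash\ it is quoted directly from Schaefer [Chapter IV, Theorem 11.2] (Eberlein's theorem) \textemdash\ so the relevant comparison is with the classical Grothendieck\textendash Schaefer proof of that result. Your skeleton is exactly that proof's skeleton, and the first three steps are correct: $A$ is weakly bounded (hence bounded, by Mackey), the closure $K$ of $A$ in $(V')^{*}$ under $\sigma((V')^{*},V')$ is compact by Tychonoff, and by Grothendieck's completeness theorem it suffices to show each $\xi\in K$ is $\sigma(V',V)$-continuous on every equicontinuous $S\subseteq V'$.

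The gap is in how you execute that last step, in two places. First, the proposed upgrade from \emph{sequential} continuity of $\xi|_{S}$ to continuity on the compact set $\mathrm{cl}(S)$ is invalid: for non-separable $V$ the set $\mathrm{cl}(S)$ with $\sigma(V',V)$ is a compact space that need not be metrizable, and on such spaces sequential continuity does not imply continuity. (On $\beta\mathbb{N}$, which arises as a weak-$*$ compact subset of the dual unit ball of the complete space $\ell^{\infty}$, every convergent sequence is eventually constant, so \emph{every} function is sequentially continuous; moreover a discontinuity point $f_0$ of $\xi|_{\mathrm{cl}(S)}$ need not be the limit of any $\sigma(V',V)$-convergent sequence from $S$, so checking convergent sequences $f_m\to f$ proves nothing.) Second, the assertion that ``equicontinuity of $S$ bounds the double array uniformly and licenses the interchange'' is false as stated: uniform boundedness never by itself permits interchanging iterated limits (take $a_{m,n}=1$ for $m\le n$ and $a_{m,n}=0$ otherwise; the two iterated limits are $1$ and $0$). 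The actual mechanism is an interleaved construction by contradiction: assuming $\xi|_{\mathrm{cl}(S)}$ discontinuous at $f_0$, one builds sequences $(x_n)$ in $A$ and $(f_m)$ in $S$ with $\lim_n f_m(x_n)=\xi(f_m)$ for each $m$, $\lim_m f_m(x_j)=f_0(x_j)$ for each $j$, and $\left|\xi(f_m)-\xi(f_0)\right|\ge\varepsilon$; here $(f_m)$ converges to $f_0$ only pointwise on the countable set $\{x_j\}$, not in $\sigma(V',V)$. Your hypothesis then yields a subsequence of $(x_n)$ converging weakly to some $x\in V$, giving $\xi(f_m)=f_m(x)$ and $\xi(f_0)=f_0(x)$; since $x$ lies in the weakly closed, hence (by Mazur) originally closed, linear span of $\{x_j\}$, equicontinuity of $\{f_m\}$ upgrades pointwise convergence on $\{x_j\}$ to $f_m(x)\to f_0(x)$, contradicting $\left|f_m(x)-f_0(x)\right|\ge\varepsilon$. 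This construction \textemdash\ where the sequential hypothesis, equicontinuity, and Mazur's theorem interact \textemdash\ is the real content of the theorem and is what your sketch is missing.
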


The next theorem shows that an inductive family of locally convex spaces with compact (weakly
compact) connecting maps is boundedly retractive (regular).

\begin{theorem}\label{th:25}
Let $\{V_i\}_{i\in \N}$ be a family of locally convex topological
vector spaces and let $\{f_{i}\}_{i\in \N}$ be a family of linear
continuous maps
such that $f_{i}:V_i\to V_{i+1}$. Then 
\begin{enumerate}
\item if the inductive family $\{V_i, f_{i}\}_{i\in \N}$ is weakly
  compact, then it is regular, and
\item if the inductive family $\{V_i, f_{i}\}_{i\in \N}$ is
  compact, then it is boundedly retractive. 
\end{enumerate}
\end{theorem}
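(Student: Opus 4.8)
The plan is to study an arbitrary bounded set $B \subseteq V$ through its interaction with the images of the canonical maps $g_n : V_n \to V$. These images are increasing, since $g_n = g_{n+1} \scirc f_n$ forces $g_n(V_n) \subseteq g_{n+1}(V_{n+1})$, and they exhaust $V$, so $V = \bigcup_n g_n(V_n)$. The heart of both parts is to show that a bounded set is already ``captured'' at a single step; the weak compactness or compactness of the connecting maps is exactly what forces this, and Theorem \ref{th:52} is the device that converts the (weak) compactness hypothesis into the sequential statements the argument needs.

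For part (1), given bounded $B \subseteq V$, the goal is to produce $m \in \N$ and a bounded $B_m \subseteq V_m$ with $g_m\mid_{B_m}$ a bijection onto $B$. I would argue by contradiction: assuming that no step $V_m$ contains a bounded set mapping onto $B$, I would extract a sequence $\{b_k\} \subseteq B$ and strictly increasing indices $n_k$ exhibiting $\{b_k\}$ as escaping through the spectrum. Each connecting map, being weakly compact, carries bounded sets to relatively weakly compact ones, and Theorem \ref{th:52} then supplies, at each fixed step, a weakly convergent subsequence of any bounded sequence of lifts. I would use this weak convergence to assemble an absolutely convex neighbourhood $U$ of the origin in $V$ — one whose preimage $g_n^{-1}(U)$ is a neighbourhood in $V_n$ for every $n$, so that $U$ is genuinely a neighbourhood in the inductive limit topology — for which $b_k \notin k\,U$ for every $k$. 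This contradicts the boundedness of $B$, and the contradiction forces some step to capture $B$, giving regularity.

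For part (2), note first that every compact map is weakly compact, so part (1) already yields regularity: $B$ lifts to a bounded $B_m \subseteq V_m$ with $g_m\mid_{B_m} : B_m \to B$ a continuous bijection. It remains to upgrade this bijection to a homeomorphism, and here I would exploit compactness directly. Replacing $m$ by $m+1$ and $B_m$ by $B_{m+1} := f_m(B_m)$ — which is bounded, and in fact relatively compact in $V_{m+1}$ because $f_m$ is compact — I retain a continuous bijection $g_{m+1}\mid_{B_{m+1}} : B_{m+1} \to B$ (injectivity transfers from $B_m$ through the relation $g_m = g_{m+1}\scirc f_m$). On the compact closure $K := \mathrm{cl}(B_{m+1}) \subseteq V_{m+1}$, the topology induced through $g_{m+1}$ from $V$ is coarser than the ambient $V_{m+1}$-topology and, since $V$ is Hausdorff and $g_{m+1}$ is injective on the relevant set (automatic for a reduced spectrum), is itself Hausdorff; as a coarser Hausdorff topology on a compact space, it must coincide with the $V_{m+1}$-topology. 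Hence the two topologies agree on $B_{m+1}$, so $g_{m+1}\mid_{B_{m+1}}$ is a homeomorphism onto $B$ and the family is boundedly retractive.

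The main obstacle is the capture step in part (1) — showing that a bounded set cannot genuinely spread across infinitely many steps. This is where weak compactness is indispensable, and where Theorem \ref{th:52} does the real work by supplying the weakly convergent subsequences; the contradiction neighbourhood $U$ must be built carefully so that it is simultaneously a neighbourhood in $V$ (that is, pulls back to a neighbourhood under every $g_n$) and escapes the sequence $\{b_k\}$. By contrast, the topological upgrade in part (2) is comparatively soft, resting only on the elementary fact that comparable Hausdorff topologies coincide on compact sets.
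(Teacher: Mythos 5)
Your proof of part (1) breaks down at its central step, and the tool you invoke there points in the wrong direction. Theorem \ref{th:52} is a one-directional, partial Eberlein--Smulian statement: it upgrades weak \emph{sequential} compactness to weak compactness in a complete space. It does not let you extract weakly convergent subsequences from bounded or relatively weakly compact sets, and in a general locally convex space that converse implication is false: the set $[0,1]^{[0,1]}$ is weakly compact in $\R^{[0,1]}$ (whose weak and product topologies coincide), yet the sequence of binary-digit functions in it has no weakly convergent subsequence. So the sentence ``Theorem \ref{th:52} then supplies, at each fixed step, a weakly convergent subsequence of any bounded sequence of lifts'' is unjustified, and the theorem must hold for arbitrary locally convex $V_i$, where no such subsequences need exist. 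Worse, even granting those subsequences, the decisive object --- the absolutely convex set $U$ that is simultaneously a neighbourhood of $0$ in the inductive limit (i.e.\ $g_n^{-1}(U)$ is a neighbourhood in every $V_n$) and satisfies $b_k\notin k\,U$ for all $k$ --- is never constructed; you only say you ``would use this weak convergence to assemble'' it. That construction \emph{is} the theorem: it is the entire content of Komatsu's argument, which the paper simply cites (\cite[Theorems 6 and 6']{HK:1967}) rather than proves, and it rests on a duality/separation argument in which weak compactness guarantees that certain convex sets are closed, not on subsequence extraction. As written, part (1) restates the problem rather than solving it, and since part (2) leans on part (1) for regularity, the gap propagates.

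Part (2) is closer to a correct skeleton (compact $\Rightarrow$ weakly compact $\Rightarrow$ regular, then upgrade the bijection using compactness of $\mathrm{cl}(f_m(B_m))$), but it has its own holes. The ``coarser Hausdorff topology on a compact space'' argument needs $g_{m+1}$ to be injective on the compact closure $K=\mathrm{cl}(B_{m+1})$; injectivity on $B_{m+1}$, which you correctly transfer from $B_m$, does not pass to $K$, and your parenthetical repair --- ``automatic for a reduced spectrum'' --- imports a hypothesis that Theorem \ref{th:25} does not contain. Likewise, your net argument uses Hausdorffness of the inductive limit $V$, which is not automatic for locally convex inductive limits of Hausdorff spaces; for weakly compact sequences it is itself one of Komatsu's conclusions, not an assumption you may freely invoke. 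In short: the soft topological upgrade in part (2) could likely be repaired under additional hypotheses, but the hard analytic core of part (1) is missing entirely, so the proposal does not constitute a proof of the statement.
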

\begin{proof}
The first part of this theorem has been proved in \cite[Theorem 6]{HK:1967} and
the second part in \cite[Theorem 6']{HK:1967}
\end{proof}
However, one can find boundedly retractive inductive families which are
not compact \cite{KDB:1988}. In \cite{VSR:1970}, Retakh studied an important condition on
inductive families of locally convex spaces called condition $(M)$. 
\begin{definition}
Let $\{V_i\}_{i\in \N}$ be a family of locally convex topological
vector spaces and let $\{f_{i}\}_{i\in \N}$ be a family of linear
continuous maps such that $f_{i}:V_i\to V_{i+1}$. The inductive family $\{V_i,
f_{i}\}_{i\in \N}$ satisfies \textbf{condition ($M$)} if there
exists a sequence of absolutely convex neighbourhoods $\{U_i\}_{i\in
  \N}$ of $0$ such that, for every $i\in \N$, we have $U_i\subseteq
V_i$ and,
\begin{enumerate}
\item for every $i\in \N$, we have $U_i\subseteq f^{-1}_{i}(U_{i+1})$, and
\item for every $i\in \N$, there exists $M_i>0$ such that, for every
  $j>M_i$, the topologies induced from $V_j$ on $U_i$ are all the same.  
\end{enumerate}
\end{definition}

It can be shown that condition $(M)$ has close connection with
regularity of inductive families of locally convex spaces \cite{KDB:1988}. 

\begin{theorem}\label{th:24}
Let $\{V_i\}_{i\in \N}$ be a family of normed
vector spaces and let $\{f_{i}\}_{i\in \N}$ be a family of 
continuous linear maps
such that $f_{i}:V_i\to V_{i+1}$. Suppose that the inductive family
$\{V_i, f_{i}\}_{i\in \N}$ is regular. Then inductive family $\{V_i, f_{i}\}_{i\in \N}$ is boundedly retractive if and only if it
satisfies condition ($M$).
\end{theorem}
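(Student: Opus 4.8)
The plan is to establish the two implications separately. I first note that bounded retractivity is formally stronger than regularity, since a homeomorphism onto $B$ is in particular a bijection onto $B$; thus the real content is to exchange condition ($M$) for bounded retractivity under the standing assumption of regularity. Throughout, write $g_i\colon V_i\to V$ for the canonical maps into $V=\varinjlim V_i$ and $F_{i,j}=f_{j-1}\circ\cdots\circ f_i\colon V_i\to V_j$ for the connecting maps, so that $g_j\circ F_{i,j}=g_i$. The central technical point, used in both directions, is the following comparison of topologies on a fixed subset: since each $g_j$ is continuous, the topology that $V$ induces on any set $A\subseteq V_i$ (via $g_i$) is coarser than the one induced by every $V_j$ with $j\ge i$; hence whenever the $V_j$-topologies on $A$ stabilize as $j\to\infty$, the $V$-topology on $A$ is automatically coarser than this common topology, and the whole difficulty is to produce absolutely convex $V$-neighbourhoods that realize the common topology on bounded sets.

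For the implication from condition ($M$) to bounded retractivity, I would start from a bounded set $B\subseteq V$ and use regularity to find $m_0$ and a bounded $B_{m_0}\subseteq V_{m_0}$ with $g_{m_0}\mid_{B_{m_0}}$ a bijection onto $B$. After rescaling the neighbourhoods $\{U_i\}$ supplied by condition ($M$) — which preserves both clauses of the definition, topologies being scale invariant — I may assume $B_{m_0}\subseteq U_{m_0}$. I would then pass to the index $m=M_{m_0}+1$ and set $B_m=F_{m_0,m}(B_{m_0})$; the nesting $f_i(U_i)\subseteq U_{i+1}$ gives $B_m\subseteq U_m$, and $g_m\mid_{B_m}$ is still a bijection onto $B$ because $g_m\circ F_{m_0,m}=g_{m_0}$. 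By the second clause of condition ($M$) the $V_j$-topologies on $U_{m_0}$, hence on the subset $B_m$, coincide for all $j>M_{m_0}$, so in particular the $V_m$-topology agrees with every $V_j$-topology there. Invoking the realization of this stabilized topology by $V$-neighbourhoods (the core step discussed below), the $V$-topology on $B_m$ coincides with the stabilized $V_m$-topology, which makes $g_m\mid_{B_m}$ a homeomorphism onto $B$ and establishes bounded retractivity.

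For the converse I would reverse this construction to manufacture the neighbourhoods required by condition ($M$). Fixing $i$ and letting $U_i$ be the open unit ball of $V_i$, I would apply bounded retractivity to the bounded set $B=g_i(U_i)$ to obtain an index $m\ge i$ for which $V$ and $V_m$ induce the same topology on $B$. Since the $V_j$-topologies on $U_i$ coarsen monotonically in $j$ and are all squeezed between the $V$-topology and the $V_m$-topology, the equality of the two extremes on $B$ forces these step topologies to stabilize from $V_m$ onward, yielding the constant $M_i=m$ of clause (2). The nesting condition (1) is then arranged by rescaling: replacing each $U_i$ by $\rho_i U_i$ with $\rho_i$ chosen so that $f_i(\rho_i U_i)\subseteq\rho_{i+1}U_{i+1}$, which is possible by continuity of the $f_i$ and leaves the stabilization of clause (2) intact. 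Here the normed (hence metrizable) structure of the $V_i$ is what lets me carry out this rescaling sequentially over $i$ without destroying what has already been obtained.

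The main obstacle I anticipate is the realization step flagged in the first paragraph: turning the abstract stabilization of the step topologies on $U_i$ into an honest absolutely convex neighbourhood of $0$ in the inductive-limit topology that induces the finer, stabilized topology on the bounded set. This is the Grothendieck-type factorization at the heart of regularity theory for $(\mathrm{LB})$-spaces, and it is exactly the place where both the normed structure of the $V_i$ and the regularity hypothesis are indispensable; the remaining bookkeeping — rescaling neighbourhoods, pushing bounded sets forward along $F_{i,j}$, and checking that bijections upgrade to homeomorphisms — is routine once this step is in hand.
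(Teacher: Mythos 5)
Your proposal has a genuine gap at precisely the point where the theorem's content lies. For the direction (condition $(M)$ $\Rightarrow$ boundedly retractive), everything you actually carry out --- invoking regularity to get $B_{m_0}$, rescaling so that $B_{m_0}\subseteq U_{m_0}$, pushing forward to $B_m=F_{m_0,m}(B_{m_0})$ --- only sets the stage. The conclusion then rests on the step you call ``the realization of this stabilized topology by $V$-neighbourhoods,'' which you never prove and which your final paragraph explicitly defers as ``the main obstacle.'' That step is not bookkeeping: continuity of $g_m$ gives one half of the homeomorphism for free, and the entire assertion of the theorem is the other half, namely that the a priori coarser inductive-limit topology on $B$ is as fine as the stabilized $V_m$-topology on $B_m$. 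Proving this requires constructing, from the nested balls $U_i$ and the stabilization of the step topologies, absolutely convex sets that are neighbourhoods of $0$ in every $V_j$ (hence neighbourhoods in $V$) and whose traces on $B_m$ refine the $V_m$-traces; that construction is the substance of the result, and nothing in your sketch supplies it. Note also that the paper itself contains no argument to compare against: its proof of this theorem is the single citation to Bierstedt's Proposition 9(d), so your sketch is not an alternative route to an in-paper proof but an incomplete reproduction of the cited one, broken off exactly where the difficulty is concentrated.

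The converse direction is in better shape: the squeeze of the $V_j$-trace topologies on $U_i$ between the $V_m$-trace and the $V$-trace, together with the sequential rescaling $\rho_{i+1}\ge\rho_i\left\|f_i\right\|$ to secure clause (1), is essentially correct, and the scale-invariance remark disposing of clause (2) after rescaling is fine. One point still needs patching: bounded retractivity hands you \emph{some} bounded set $B_m\subseteq V_m$ with $g_m\mid_{B_m}$ a homeomorphism onto $B=g_i(U_i)$, not the set $F_{i,m}(U_i)$ itself. To run the squeeze you must transfer the homeomorphism to $F_{i,m}(U_i)$, which is immediate when the canonical maps $g_m$ are injective (then $F_{i,m}(U_i)=B_m$), but the statement of the theorem makes no injectivity hypothesis, so either that case must be argued separately or the comparison of the two sets must be made explicit.
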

\begin{proof}
This theorem is proved in \cite[Proposition 9(d)]{KDB:1988}.
\end{proof}

\section{Time-varying vector fields and their flows}\label{sec:6}

In this section, we define and study time-varying $C^{\nu}$-vector
field. 

\begin{definition}
Let $M$ be a $C^{\nu}$-manifold and $\mathbb{T}\subseteq \R$ be an
interval. Then a map $X:\mathbb{T}\times M\to TM$ is a
\textbf{time-varying $C^{\nu}$-vector field} if, for every $t\in
\mathbb{T}$, the map $X^t:M\to TM$ defined as
\begin{equation*}
X^t(x)=X(t,x),\qquad\forall x\in M,
\end{equation*}
is a $C^{\nu}$-vector field.
\end{definition}

Associated to every time-varying $C^{\nu}$-vector
field $X:\mathbb{T}\times M\to TM$, one can define a curve
$\widehat{X}:\mathbb{T}\to\Gamma^{\nu}(TM)$ such that
\begin{equation*}
\widehat{X}(t)(x)=X(t,x),\qquad\forall t\in \mathbb{T},\ \forall x\in M. 
\end{equation*}
It is clear that this correspondence between time-varying
$C^{\nu}$-vector fields and curves on the space $\Gamma^{\nu}(TM)$ is one-to-one.

In order to study properties of time-varying $C^{\nu}$-vector fields, we
need to define a topology on the space $\Gamma^{\nu}(TM)$. In the
holomorphic case, the natural topology on the space
$\Gamma^{\hol}(TM)$ is the 
so-called ``compact-open'' topology, which has been throughly studied in the literature \cite[\S 8]{kriegl1997convenient}. 

\begin{definition}
Let $K\subseteq M$ be a compact set. Then we define the seminorm
$p^{\hol}_K$ on $\Gamma^{\hol}(TM)$ by
\begin{equation*}
p^{\hol}_{K}(X)=\left\{\left\|X(x)\right\|\suchthat x\in K\right\}
\end{equation*}
The family of seminorms $\{p^{\hol}_{K}\}$ define a locally convex
topology on $\Gamma^{\hol}(TM)$ called the \textbf{$C^{\hol}$-topology}.
\end{definition}

Properties of $C^{\hol}$-topology on $\Gamma^{\hol}(TM)$ has been
investigated in \cite[\S]{kriegl1997convenient}. The following theorem has been proved in \cite[\S8.4]{kriegl1997convenient}.

\begin{theorem}
The vector space $\Gamma^{\hol}(TM)$ equipped with the
$C^{\hol}$-topology is a Hausdorff, separable, complete, metrizable,
and nuclear locally convex space.
\end{theorem}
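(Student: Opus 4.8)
The plan is to establish the three metric/topological properties directly and then to reduce the two deeper properties, nuclearity and separability, to the classical theory of holomorphic functions on domains in $\C^n$. Throughout I use that a complex manifold is second countable, hence admits an exhaustion $M=\bigcup_{m\in\N}K_m$ by compact sets with $K_m\subseteq\mathrm{int}(K_{m+1})$. Since any compact $K\subseteq M$ is contained in some $K_m$ and therefore $p^{\hol}_K\le p^{\hol}_{K_m}$, the countable subfamily $\{p^{\hol}_{K_m}\}_{m\in\N}$ already generates the $C^{\hol}$-topology; as these seminorms separate points (if $X\ne 0$ then $X(x_0)\ne 0$ for some $x_0$, whence $p^{\hol}_{\{x_0\}}(X)>0$), the space is Hausdorff, and being Hausdorff with a countable generating family of seminorms it is metrizable. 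For completeness I would take a Cauchy sequence $\{X_j\}$; it is uniformly Cauchy on every $K_m$, hence converges locally uniformly, and reading this off in charts Weierstrass' theorem (a locally uniform limit of holomorphic maps is holomorphic) shows the limit is again a holomorphic vector field, so it lies in $\Gamma^{\hol}(TM)$.

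The crux is nuclearity. Here I would pass to a local model. Fixing a countable atlas $\{(U_i,\phi_i)\}_{i\in\N}$, a holomorphic vector field is determined in each chart by its $n$ component functions, giving a linear map
\begin{equation*}
\Gamma^{\hol}(TM)\longrightarrow\prod_{i\in\N}C^{\hol}(\phi_i(U_i))^n
\end{equation*}
that is a topological embedding onto the closed subspace cut out by the transition relations on the overlaps $U_i\cap U_j$ (closed, since they are pointwise linear equations and evaluation is continuous). As nuclearity is inherited by closed subspaces and countable products, it suffices to prove that $C^{\hol}(\Omega)$ is nuclear for open $\Omega\subseteq\C^n$. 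For this I would verify, via the Grothendieck--Pietsch criterion, that for compact $K\subseteq\mathrm{int}(L)$ the canonical linking map between the local Banach spaces associated with $p^{\hol}_L$ and $p^{\hol}_K$ is nuclear: expanding a holomorphic function in its Taylor monomials and estimating the coefficients by $\sup_L$ through the Cauchy integral formula exhibits the restriction as an absolutely convergent series of rank-one maps whose coefficients decay geometrically like $(\rho/r)^{|\alpha|}$ with $\rho<r$, which is exactly a nuclear decomposition. I expect this Cauchy-estimate step to be the main obstacle, the bookkeeping for a general domain $\Omega$ (rather than a single polydisc) requiring one to cover compact sets by polydiscs and track the geometric constants uniformly.

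Finally, separability comes for free once the preceding properties are in hand: $\Gamma^{\hol}(TM)$ is now a nuclear Fréchet space, and every nuclear Fréchet space is separable, being a countable reduced projective limit of separable local spaces and hence a closed subspace of a separable metrizable product. Collecting the items, $\Gamma^{\hol}(TM)$ with the $C^{\hol}$-topology is Hausdorff, metrizable, complete, nuclear, and separable, as claimed.
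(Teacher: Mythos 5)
Your proof is correct, but it takes a genuinely different route from the paper: for this theorem the paper supplies no argument at all, deferring entirely to the reference [Kriegl--Michor, \S 8.4]. You instead build everything from classical facts: Hausdorffness and metrizability from a countable compact exhaustion, completeness from the Weierstrass convergence theorem read in charts, nuclearity by embedding $\Gamma^{\hol}(TM)$ as a closed subspace of the countable product $\prod_{i}C^{\hol}(\phi_i(U_i))^n$ and reducing to nuclearity of $C^{\hol}(\Omega)$ for open $\Omega\subseteq\C^n$, and separability as a formal consequence of being nuclear and Fr\'echet. What your route buys is a self-contained argument whose only analytic inputs are Cauchy estimates, and one that mirrors the technique the paper itself uses later for the analogous statement about $\mathrm{L}(C^{\omega}(N);C^{\omega}(M))$ (realize the space as a closed subspace of a product, then invoke permanence of completeness, separability and nuclearity). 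What the paper's citation buys is brevity and the avoidance of the one genuinely delicate point, which is nuclearity.

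On that delicate point there is a soft spot worth repairing. Your Grothendieck--Pietsch verification is stated for an arbitrary compact pair $K\subseteq\mathrm{int}(L)$ in a general domain: when $K$ must be covered by several polydiscs, the monomial expansions give a nuclear map into the \emph{product} of the sup-norm spaces over the individual polydiscs, and nuclearity of a composition $E\to F\hookrightarrow G$ does not pass back to $E\to F$; so the linking map into the local Banach space attached to $p^{\hol}_K$ is not yet exhibited as nuclear. The clean fix is to iterate your own product trick one more level: cover $\Omega$ by countably many open polydiscs $D_j$ and embed $C^{\hol}(\Omega)$ as a closed subspace of $\prod_j C^{\hol}(D_j)$ (closed, by the compatibility-on-overlaps conditions), so that nuclearity need only be checked on a single polydisc, where the closed polydiscs concentric with it form a cofinal family of compacts and your geometric-decay estimate applies verbatim. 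Alternatively, one can note that $C^{\hol}(\Omega)$ is a closed topological subspace of the nuclear space $C^{\infty}(\Omega)$, the agreement of the two topologies on holomorphic functions being exactly the Cauchy estimates. Either way the issue is repairable by standard means, and you flagged it yourself as the main obstacle, so I regard your proposal as correct.
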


In the real analytic case, it is natural to equip $\Gamma^{\omega}(TM)$ with
the subspace topology from $\Gamma^{\infty}(TM)$. However, it can be shown that
this topology on $\Gamma^{\omega}(TM)$ is not complete \cite[Chapter 5]{SJ-ADL:2014}. Another
topology on $\Gamma^{\omega}(TM)$ can be defined using the fact that,
every real analytic vector field is the germ of a holomorphic vector
field, defined on a suitable domain. We will see that this topology on $\Gamma^{\omega}(TM)$ makes it
into a complete, separable, and nuclear space. Each
of these properties is essential for validity of our extension
results. In \cite{AM:1966}, using the so-called compact-open topology
on space of holomorphic functions, two characterization for a topology
on the space of real analytic functions has been developed. This
topology on the space $C^{\omega}(M)$ has been further studied in
\cite{PD:2010}. In this section, using the same setting as in
\cite{AM:1966}, we define a topology on the space of real analytic
functions.

 While Two different characterization of
this topology has been studied in. 

Let $M$ be a real analytic manifold and $M^{\C}$ be a complexification
of $M$. We denote the set of all holomorphic vector fields on
$\overline{U}$ by $\Gamma^{\hol}(T\overline{U})$. We define
$\Gamma^{\hol,\R}(T\overline{U})\subseteq
\Gamma^{\hol}(T\overline{U})$ as 
\begin{equation*}
\Gamma^{\hol,\R}(T\overline{U})=\left\{X\in
  \Gamma^{\hol}(T\overline{U})\suchthat X(x)\in T_xM, \forall x\in M\right\}
\end{equation*}

Then, for every neighbourhood $\overline{U}\subseteq M^{\C}$
containing $M$, we define the map
$i^{\R}_{\overline{U}}:\Gamma^{\hol,\R}(T\overline{U})\to
\Gamma^{\omega}(TM)$ as
\begin{equation*}
i^{\R}_{\overline{U}}(X)=X\mid_{M}.
\end{equation*}

If we denote the set of all the neighbourhoods $\overline{U}\in M^{\C}$
of $M$ by $\mathscr{N}_M$. Then we can define the inductive limit
topology on $\Gamma^{\omega}(TM)$.

\begin{definition}
The \textbf{inductive topology} on $\Gamma^{\omega}(TM)$ is defined
as the finest locally convex topology which makes all the maps
$\{i^{\R}_{\overline{U}}\}_{\overline{U}\in\mathscr{N}_M}$ continuous.
\end{definition}

Although the definition of inductive topology on $\Gamma^{\omega}(TM)$ is natural, characterization
of properties of $\Gamma^{\omega}(TM)$ using this topology is not easy. The
main reason is that, for
non-compact $M$, the inductive limit $\varinjlim_{\overline{U}\in\mathscr{N}_M} \Gamma^{\hol,\R}(T\overline{U})=\Gamma^{\omega}(TM)$ is not countable
\cite[Fact 14]{PD:2010}. However, one can define another topology on
the space of
real analytic sections which is representable by countable inductive
and projective limits \cite{AM:1966}. 

Let $K\subseteq M$ be a compact set and $\mathscr{N}_K$ be the set of
all neighbourhoods of $K$ in $M^{\C}$. Then we denote the space of
germs of holomorphic vector fields around $K$ by
$\mathscr{G}^{\hol}_K$. In other words, we have
\begin{equation*}
\varinjlim \Gamma^{\hol}(T\overline{U})=\mathscr{G}^{\hol}_{K},
\end{equation*}
where the inductive limit is on the directed set $\mathscr{N}_K$. One
can equip the space $\mathscr{G}^{\hol}_K$ with the 
locally convex topology defined using the above inductive limit.

It turns out that $\mathscr{G}^{\hol}_{K}$ can also be expressed as a
inductive limit of a countable family of Banach spaces
\cite{PD:2010}. Note that, for every compact set $K\subseteq M$, one can choose a
sequence of open sets $\{\overline{U}_n\}_{n\in\N}$ in $M^{\C}$ such that, for every $n\in \N$, we have
\begin{equation*}
\mathrm{cl}(\overline{U}_{n+1})\subseteq \overline{U}_{n},
\end{equation*}
and $\bigcap_{i=1}^{\infty} \overline{U}_i=K$. Then we have $\varinjlim_{n\to\infty}
\Gamma^{\hol}(T\overline{U}_n)=\mathscr{G}^{\hol}_{K}$. 
\begin{definition}
Let $\overline{U}$ be an open set in $M^{\C}$. We define the map
$p_{\overline{U}}:\Gamma^{\hol}(T\overline{U})\to [0,\infty]$ by
\begin{equation*}
p_{\overline{U}}(X)=\sup\{\|X(x)\|\mid x\in \overline{U}\},\qquad\forall
X\in \Gamma^{\hol}(\overline{U}).
\end{equation*}
Then $\Gamma^{\hol}_{\bdd}(T\overline{U})$ is a subspace of $\Gamma^{\hol}(T\overline{U})$ defined as
\begin{equation*}
\Gamma^{\hol}_{\bdd}(T\overline{U})=\{X\in \Gamma^{\hol}(T\overline{U})\mid
p_{\overline{U}}(X)< \infty\}.
\end{equation*}
We equip $\Gamma^{\hol}_{\bdd}(T\overline{U})$ with the norm
$p_{\overline{U}}$ and define the inclusion
$\rho_{\overline{U}}:\Gamma^{\hol}_{\bdd}(T\overline{U})\to
\Gamma^{\hol}(T\overline{U})$ as
\begin{equation*}
\rho_{\overline{U}}(X)=X,\qquad\forall X\in \Gamma^{\hol}_{\bdd}(T\overline{U}).
\end{equation*}
\end{definition}

\begin{theorem}\label{th:17}
The space $(\Gamma^{\hol}_{\bdd}(T\overline{U}),p_{\overline{U}})$ is a
Banach space and the map
$\rho_{\overline{U}}:\Gamma^{\hol}_{\bdd}(T\overline{U})\to \Gamma^{\hol}(T\overline{U})$ is a compact continuous map. 
\end{theorem}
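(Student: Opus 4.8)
The plan is to treat the two assertions separately, relying on two classical pillars of complex analysis: the Weierstrass convergence theorem (a locally uniform limit of holomorphic maps is holomorphic) for the Banach space claim, and Montel's theorem (a locally bounded family of holomorphic maps is a normal family) for the compactness claim. Throughout I would work in holomorphic coordinate charts on $M^{\C}$, where a holomorphic vector field is represented by a tuple of scalar holomorphic functions, so that both classical theorems apply componentwise.

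First I would verify that $p_{\overline{U}}$ is genuinely a norm on $\Gamma^{\hol}_{\bdd}(T\overline{U})$: homogeneity and the triangle inequality are immediate from the pointwise definition of $\norm{X(x)}$, and positive-definiteness holds since a holomorphic vector field vanishing identically on $\overline{U}$ is the zero field. For completeness, let $\{X_k\}_{k\in\N}$ be a $p_{\overline{U}}$-Cauchy sequence. Since $p_{\overline{U}}$ is the supremum norm over $\overline{U}$, the sequence $\{X_k\}$ converges uniformly on $\overline{U}$ to some bounded map $X$; in particular it converges uniformly on every compact subset of $\overline{U}$. Reading this in charts, each scalar component of $X_k$ is a uniformly Cauchy sequence of holomorphic functions, so by the Weierstrass convergence theorem the limiting components are holomorphic, whence $X$ is a holomorphic vector field. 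Since $\sup_{x\in\overline{U}}\norm{X(x)}\le\sup_{x\in\overline{U}}\norm{X_k(x)}+p_{\overline{U}}(X-X_k)<\infty$ for large $k$, we get $X\in\Gamma^{\hol}_{\bdd}(T\overline{U})$ and $X_k\to X$ in $p_{\overline{U}}$. This establishes the Banach space claim.

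For the second assertion, continuity of $\rho_{\overline{U}}$ is the easy half: for every compact $K\subseteq\overline{U}$ one has $p^{\hol}_K(X)=\sup_{x\in K}\norm{X(x)}\le\sup_{x\in\overline{U}}\norm{X(x)}=p_{\overline{U}}(X)$, so each generating seminorm of the $C^{\hol}$-topology is dominated by the norm $p_{\overline{U}}$, and the inclusion is continuous. Since the domain is a normed space, to show $\rho_{\overline{U}}$ is compact it suffices to show that the image of the closed unit ball $B=\left\{X\suchthat p_{\overline{U}}(X)\le 1\right\}$ is relatively compact in $\Gamma^{\hol}(T\overline{U})$. By the metrizability of the $C^{\hol}$-topology (established earlier in the excerpt), relative compactness is equivalent to relative sequential compactness, so I would take an arbitrary sequence $\{X_k\}\subseteq B$ and extract a subsequence converging in the $C^{\hol}$-topology. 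The family $\{X_k\}$ is uniformly bounded by $1$ on $\overline{U}$, hence locally bounded; applying Montel's theorem in charts yields, on each member of a countable compact exhaustion of $\overline{U}$, a uniformly convergent subsequence, and a diagonal argument produces a single subsequence converging uniformly on every compact subset of $\overline{U}$. By Weierstrass the limit is again a holomorphic vector field, i.e. an element of $\Gamma^{\hol}(T\overline{U})$, and uniform convergence on compacta is exactly convergence in the $C^{\hol}$-topology.

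The main obstacle I anticipate is the passage from the componentwise, chart-local statements of Montel and Weierstrass to a clean global statement for vector fields on the manifold $\overline{U}$: one must check that the chart-dependent norms $\norm{\cdot}$ are mutually comparable on compact sets so that uniform boundedness and uniform convergence are chart-independent notions, and organize the diagonal extraction over a countable exhaustion of $\overline{U}$ by charts and compact sets. Once the bookkeeping of overlapping charts is handled, the analytic core reduces entirely to these two classical theorems.
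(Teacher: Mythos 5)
Your proof is correct, but it takes a genuinely different route from the paper's. The paper treats continuity of $\rho_{\overline{U}}$ exactly as you do (each seminorm $p^{\hol}_K$ is dominated by $p_{\overline{U}}$), but then settles both remaining claims by leaning on the locally convex machinery already cited for $\Gamma^{\hol}(T\overline{U})$: for compactness, the image of the unit ball is bounded, and since $\Gamma^{\hol}(T\overline{U})$ is nuclear it has the Heine--Borel property, so this bounded set is relatively compact; for the Banach-space claim, a $p_{\overline{U}}$-Cauchy sequence is pushed through $\rho_{\overline{U}}$ into $\Gamma^{\hol}(T\overline{U})$, whose completeness supplies a holomorphic limit $X$, and an $\epsilon/2$-argument then upgrades $C^{\hol}$-convergence to sup-norm convergence and boundedness of $X$. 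You instead unpack those two citations into their classical content: Weierstrass's convergence theorem replaces the appeal to completeness (your uniform limit is holomorphic by Weierstrass rather than because $\Gamma^{\hol}(T\overline{U})$ is complete), and Montel's theorem plus a diagonal extraction over a compact exhaustion replaces the appeal to nuclearity and Heine--Borel, with metrizability of the $C^{\hol}$-topology letting you pass from sequential compactness to relative compactness. What the paper's route buys is brevity and generality: it works verbatim whenever the codomain is a complete nuclear space, with no chart bookkeeping at all. What your route buys is self-containedness: you never need the nontrivial, cited facts that $\Gamma^{\hol}(T\overline{U})$ is nuclear and complete, only classical several-variable complex analysis, at the cost of the chart-comparability and diagonalization bookkeeping that you correctly flag in your final paragraph.
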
 
\begin{proof}
Let $K$ be a compact subset of $M\cap \overline{U}$. Then, for every $X\in
\Gamma^{\hol}_{\bdd}(T\overline{U})$, we have
$p^{\hol}_K(\rho_{\overline{U}}(X))=p^{\hol}_{K}(X)\le p_{\overline{U}}(X)$, which
  implies that $\rho_{\overline{U}}$ is continuous. Now consider the
  open set $p_{\overline{U}}^{-1}\left([0,1)\right)$ in
  $\Gamma^{\hol}_{\bdd}(T\overline{U})$. The set $p_{\overline{U}}^{-1}\left([0,1)\right)$
  is bounded and $\rho_{\overline{U}}$ is continuous. So 
\begin{equation*}
\rho_{\overline{U}}\left(p_{\overline{U}}^{-1}\left([0,1)\right)\right),
\end{equation*}
is bounded in $\Gamma^{\hol}(T\overline{U})$. Since
$\Gamma^{\hol}(T\overline{U})$ is nuclear, it satisfies the
Heine\textendash Borel property \cite[Chapter III, \S 7]{schaefer}. Thus, the bounded the set
$\rho_{\overline{U}}\left(p_{\overline{U}}^{-1}\left([0,1)\right)\right)$
is relatively compact in $\Gamma^{\hol}(T\overline{U})$. So
$\rho_{\overline{U}}$ is compact.

Now we show that $(\Gamma^{\hol}_{\bdd}(T\overline{U}),p_{\overline{U}})$ is a Banach space. Let $\{X_n\}_{n\in\N}$ be a Cauchy sequence in
$\Gamma^{\hol}_{\bdd}(T\overline{U})$. It suffices to show that there exists $X\in
\Gamma^{\hol}_{\bdd}(T\overline{U})$ such that $\lim_{n\to\infty} X_n=X$ in the
topology induced by $p_{\overline{U}}$ on $\Gamma^{\hol}_{\bdd}(T\overline{U})$. Since $\rho_{\overline{U}}$ is
continuous, the sequence $\{X_n\}_{n\in\N}$ is Cauchy in
$\Gamma^{\hol}(T\overline{U})$. Since $\Gamma^{\hol}(T\overline{U})$ is complete,
there exists $X\in \Gamma^{\hol}(T\overline{U})$ such that
$\lim_{n\to\infty} X_n=X$ in the $C^{\hol}$-topology. Now we show
that $\lim_{n\to\infty} X_n=X$ in the topology of $(\Gamma^{\hol}_{\bdd}(T\overline{U}),p_{\overline{U}})$ and $X\in
\Gamma^{\hol}_{\bdd}(T\overline{U})$. 
Let $\epsilon>0$. Then there exists $N\in \N$ such that, for every $n,m>N$, we have
\begin{equation*}
p_{\overline{U}}(X_n-X_m)<\frac{\epsilon}{2}.
\end{equation*}
This implies that, for every $z\in U$ and every $n,m>N$, we have
\begin{equation*}
\|X_n(z)-X_m(z)\|<\frac{\epsilon}{2}.
\end{equation*}
So, for every $z\in U$ and every $n>N$, we choose $m_z>N$ such that 
\begin{equation*}
\|X_m(z)-X(z)\|<\frac{\epsilon}{2},\qquad\forall m\ge m_z.
\end{equation*}
This implies that, for every $z\in U$, we have 
\begin{equation*}
\|X(z)-X_n(z)\|<\|X_n(z)-X_{m_z}(z)\|+\|X_{m_z}(z)-X(z)\|<\epsilon.
\end{equation*}
So, for every $n>N$, we have 
\begin{equation*}
p_{\overline{U}}(X_n-X)<\epsilon.
\end{equation*}
This completes the proof.
\end{proof}

\begin{theorem}\label{th:2}
Let $K$ be a compact set and $\{\overline{U}_n\}_{n\in\N}$ be a
sequence of open, relatively compact neighbourhoods of $K$ in $M^{\C}$ such that
\begin{equation*}
\mathrm{cl}(\overline{U}_{n+1})\subseteq \overline{U}_n,\qquad\forall n\in \N,
\end{equation*}
and $\bigcap_{n\in \N} \overline{U}_n=K$. Then we have $\varinjlim_{n\to\infty} \Gamma^{\hol}_{\bdd}(T\overline{U}_n)=\mathscr{G}^{\hol}_K$. Moreover, the
inductive limit is compact.
\end{theorem}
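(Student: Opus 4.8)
The plan is to leverage the compact inclusions $\rho_{\overline{U}_n}$ furnished by Theorem~\ref{th:17} in order to interlace the inductive sequence of Banach spaces $\{\Gamma^{\hol}_{\bdd}(T\overline{U}_n)\}$ with the sequence of full holomorphic section spaces $\{\Gamma^{\hol}(T\overline{U}_n)\}$, whose inductive limit is already known to be $\mathscr{G}^{\hol}_K$. The identification of the limit will then follow from the cofinality principle for inductive limits, and the compactness of the connecting maps will drop out of the factorization through $\rho_{\overline{U}_n}$.

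First I would record the connecting maps. Since $\mathrm{cl}(\overline{U}_{n+1})\subseteq\overline{U}_n$, restriction to $\overline{U}_{n+1}$ defines $f_n:\Gamma^{\hol}_{\bdd}(T\overline{U}_n)\to\Gamma^{\hol}_{\bdd}(T\overline{U}_{n+1})$, $f_n(X)=X|_{\overline{U}_{n+1}}$; this lands in the bounded space and satisfies $p_{\overline{U}_{n+1}}(f_n(X))\le p_{\overline{U}_n}(X)$, so it is continuous and is the connecting map of the bounded system. In parallel I would introduce $r_n:\Gamma^{\hol}(T\overline{U}_n)\to\Gamma^{\hol}_{\bdd}(T\overline{U}_{n+1})$, again given by restriction; because $\mathrm{cl}(\overline{U}_{n+1})$ is compact and contained in $\overline{U}_n$, one has the bound $p_{\overline{U}_{n+1}}(r_n(X))\le p^{\hol}_{\mathrm{cl}(\overline{U}_{n+1})}(X)$, which shows $r_n$ is continuous from the compact-open topology to the norm topology.

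Next I would assemble the interlaced inductive sequence
\[
\Gamma^{\hol}_{\bdd}(T\overline{U}_1)\xrightarrow{\rho_{\overline{U}_1}}\Gamma^{\hol}(T\overline{U}_1)\xrightarrow{r_1}\Gamma^{\hol}_{\bdd}(T\overline{U}_2)\xrightarrow{\rho_{\overline{U}_2}}\Gamma^{\hol}(T\overline{U}_2)\xrightarrow{r_2}\cdots,
\]
and verify the two compatibilities $\rho_{\overline{U}_{n+1}}\scirc r_n=(\text{restriction }\Gamma^{\hol}(T\overline{U}_n)\to\Gamma^{\hol}(T\overline{U}_{n+1}))$ and $r_n\scirc\rho_{\overline{U}_n}=f_n$. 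These say precisely that both $\{\Gamma^{\hol}(T\overline{U}_n)\}$ and $\{\Gamma^{\hol}_{\bdd}(T\overline{U}_n)\}$ arise as cofinal subsequences of a single inductive sequence, so all three locally convex inductive limits coincide. Since $\varinjlim_n\Gamma^{\hol}(T\overline{U}_n)=\mathscr{G}^{\hol}_K$ by the result preceding the statement, this yields $\varinjlim_n\Gamma^{\hol}_{\bdd}(T\overline{U}_n)=\mathscr{G}^{\hol}_K$. For the final assertion, the factorization $f_n=r_n\scirc\rho_{\overline{U}_n}$ exhibits each connecting map as a continuous map precomposed with the compact map $\rho_{\overline{U}_n}$ of Theorem~\ref{th:17}; hence $f_n$ is compact and the inductive family is compact.

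The hard part is the topological, rather than merely set-theoretic, identification of the two limits: one must ensure that interlacing produces an isomorphism of locally convex inductive limits and not just a bijection between germs. This is exactly what the cofinality principle delivers once the boundedness and continuity estimates for $f_n$ and $r_n$ are in place; those estimates, together with Theorem~\ref{th:17}, are the only analytic input, the underlying compactness ultimately resting on the normal-families (Montel) phenomenon already used to prove Theorem~\ref{th:17}.
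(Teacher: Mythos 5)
Your proposal is correct and follows essentially the same route as the paper: the same interlaced sequence $\Gamma^{\hol}_{\bdd}(T\overline{U}_n)\to\Gamma^{\hol}(T\overline{U}_n)\to\Gamma^{\hol}_{\bdd}(T\overline{U}_{n+1})$, the same continuity estimate for the restriction maps $r_n$ via a compact set squeezed between consecutive neighbourhoods, and the same factorization $f_n=r_n\scirc\rho_{\overline{U}_n}$ through the compact inclusion of Theorem~\ref{th:17} to get compactness of the connecting maps. The paper phrases the identification of the three limits via the universal property of locally convex inductive limits rather than your ``cofinality principle,'' but these are the same mechanism.
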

\begin{proof}
For every $n\in \N$, we define $r_n:\Gamma^{\hol}(T\overline{U}_n)\to
\Gamma^{\hol}_{\bdd}(T\overline{U}_{n+1})$ as 
\begin{equation*}
r_n(X)=X\mid_{\overline{U}_{n+1}},\qquad\forall X\in \Gamma^{\hol}(T\overline{U}_n).
\end{equation*}
For every compact set $C$ with $\overline{U}_{n+1}\subseteq C\subseteq
\overline{U}_{n}$, we have $p_{\overline{U}_{n+1}}(X) \le p^{\hol}_{C}(X)$. This implies
that the map $r_n$ is continuous and we have the following diagram:
\begin{equation*}
\xymatrix{ \Gamma^{\hol}_{\bdd}(T\overline{U}_{n}) \ar[r]^{\rho_{{U}_n}}&
  \Gamma^{\hol}(T\overline{U}_{n})\ar[r]^{r_n}&
  \Gamma^{\hol}_{\bdd}(T\overline{U}_{n+1})\ar[r]^{\rho_{{U}_{n+1}}}& \Gamma^{\hol}(T\overline{U}_{n+1})}.
\end{equation*}
Since all maps in the above diagram are linear and continuous, by the
universal property of the inductive limit of locally convex spaces, we have  
\begin{equation*}
\varinjlim_{n\to\infty} \Gamma^{\hol}_{\bdd}(T\overline{U}_n)=\varinjlim_{n\to\infty}
\Gamma^{\hol}(T\overline{U}_n)=\mathscr{G}^{\hol}_K.
\end{equation*}
Moreover, for every $n\in \N$, the map $\rho_{\overline{U}_n}$ is compact and $r_n$ is
continuous. So the composition $r_n\scirc\rho_{\overline{U}_n}$ is also
compact \cite[\S 17.1, Proposition 1]{HJ:1981}. This implies that the
direct limit 
\begin{equation*}
\varinjlim_{n\to\infty} \Gamma^{\hol}_{\bdd}(T\overline{U}_n)=\mathscr{G}^{\hol}_K
\end{equation*}
is compact.
\end{proof}

One can define the subspace $\mathscr{G}^{\hol,\R}_{K}\subseteq
\mathscr{G}^{\hol}_{K}$ as
\begin{equation*}
\mathscr{G}^{\hol,\R}_{K}=\left\{[X]_K\suchthat \exists
  \overline{U}\in \mathscr{N}_K,\  X\in \Gamma^{\hol,\R}(T\overline{U})\right\}
\end{equation*}
Let $\{K_n\}_{n\in \N}$ be a compact exhaustion for $M$. Then we have 
\begin{equation*}
\varprojlim \mathscr{G}^{\hol,\R}_{K_n}=\Gamma^{\omega}(TM).
\end{equation*}
Using this projective limit, one can define another topology on space
of real analytic vector fields.

\begin{definition}
Let $\{K_n\}_{n\in \N}$ be a compact exhaustion for $M$. Then we
define the \textbf{projective limit topology} on $\Gamma^{\omega}(TM)$ as the projective limit
topology defined using the following projective family of locally
convex spaces: 
\begin{equation*}
\varprojlim \mathscr{G}^{\hol,\R}_{K_n}=\Gamma^{\omega}(TM).
\end{equation*}
It is easy to show that the projective limit topology on
$\Gamma^{\omega}(TM)$ does not depend on a specific choice of the
compact exhaustion $\{K_n\}_{n\in \N} $ for $M$.
\end{definition}

It is a deep theorem of Martineau that the projective limit topology
and inductive limit topology on $\Gamma^{\omega}(TM)$ coincide
\cite{AM:1966}. We denote this topology on $\Gamma^{\omega}(TM)$ by
the $C^{\omega}$-topology. One can show that this topology has nice
properties \cite[\S 5.3]{SJ-ADL:2014}

\begin{theorem}\label{th:60}
The vector space $\Gamma^{\omega}(TM)$ equipped with the
$C^{\omega}$-topology is a Hausdorff, separable, complete, and nuclear locally
convex space. 
\end{theorem}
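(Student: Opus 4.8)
The plan is to establish the four properties by climbing the three-stage construction of the $C^{\omega}$-topology: the Banach steps $\Gamma^{\hol}_{\bdd}(T\overline{U}_n)$, the germ spaces $\mathscr{G}^{\hol}_{K}$ realized as their countable inductive limit (Theorem \ref{th:2}), and finally $\Gamma^{\omega}(TM)$ realized as the countable projective limit of the real germ spaces $\mathscr{G}^{\hol,\R}_{K_n}$ over a compact exhaustion $\{K_n\}_{n\in\N}$. At each stage I would identify which permanence property of locally convex spaces is needed, so that the four assertions follow by transport through the inductive and projective limits. Martineau's theorem guarantees that the inductive and projective presentations give the same topology, which lets me choose whichever presentation of an intermediate space is most convenient for a given property.

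First I would analyze $\mathscr{G}^{\hol}_{K}$. By Theorem \ref{th:2} it is a countable inductive limit of the Banach spaces $\Gamma^{\hol}_{\bdd}(T\overline{U}_n)$ with compact connecting maps, hence a Silva space. By Theorem \ref{th:25}(2) this inductive family is boundedly retractive, and this is precisely the property that upgrades a countable inductive limit of Banach spaces to a complete Hausdorff space, giving completeness and the Hausdorff property of $\mathscr{G}^{\hol}_K$. For nuclearity and separability I would instead use the alternative presentation $\mathscr{G}^{\hol}_K=\varinjlim_{n} \Gamma^{\hol}(T\overline{U}_n)$ furnished by the same theorem: each Fréchet step $\Gamma^{\hol}(T\overline{U}_n)$ is nuclear and separable by the structure theorem for the $C^{\hol}$-topology, a countable inductive limit of nuclear spaces is nuclear, and a countable inductive limit of separable spaces is separable (the union of the images of countable dense subsets of the steps is dense in the limit). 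Passing to the real subspace $\mathscr{G}^{\hol,\R}_K$ preserves all four properties: it is the intersection over $x\in M$ of the closed conditions $X(x)\in T_xM$, hence a closed subspace, so it inherits completeness, the Hausdorff property, and nuclearity; for separability I would note that $\mathscr{G}^{\hol,\R}_K=\varinjlim_n \Gamma^{\hol,\R}(T\overline{U}_n)$ with each $\Gamma^{\hol,\R}(T\overline{U}_n)$ a closed subspace of the separable \emph{metrizable} space $\Gamma^{\hol}(T\overline{U}_n)$, hence separable, and then reuse the inductive-limit argument.

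Finally I would assemble $\Gamma^{\omega}(TM)=\varprojlim_n \mathscr{G}^{\hol,\R}_{K_n}$. Three of the properties are immediate from the general theory of projective limits, since the limit sits as a closed subspace of the product $\prod_n \mathscr{G}^{\hol,\R}_{K_n}$: a product of Hausdorff spaces is Hausdorff and subspaces inherit this; a product of complete spaces is complete and the projective limit is closed in it, giving completeness; and an arbitrary projective limit of nuclear spaces is nuclear. The main obstacle is separability, which is genuinely delicate because $\Gamma^{\omega}(TM)$ is not metrizable, so one cannot simply quote ``subspaces of separable metric spaces are separable.'' I see two routes. The cleaner structural route is to observe that each $\mathscr{G}^{\hol,\R}_{K_n}$ is a continuous surjective image of a separable metric space — the topological disjoint union of the separable metrizable steps $\Gamma^{\hol,\R}(T\overline{U}_n)$ — and therefore has a countable network (is cosmic); since cosmic spaces are separable and the class of cosmic spaces is stable under countable products and passage to arbitrary subspaces, the closed subspace $\Gamma^{\omega}(TM)$ of the product is separable. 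The more hands-on route, available once $M$ is analytically embedded in some $\R^N$ by Grauert's theorem, is to exhibit an explicit countable dense family — for instance the restrictions to $M$ of polynomial vector fields with rational coefficients — and to verify density directly in the $C^{\omega}$-topology via Taylor/Runge approximation of the holomorphic extensions on the complexifying neighbourhoods $\overline{U}$. Either way, pinning down separability is the step that requires real work, whereas the other three properties are formal consequences of the limit constructions.
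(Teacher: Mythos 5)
Your proposal is correct in its main lines, but note that the paper does not actually prove Theorem \ref{th:60}: it dismisses the statement with a citation to \cite[\S 5.3]{SJ-ADL:2014}, where the properties are established via the explicit seminorm description of the $C^{\omega}$-topology. Your argument is therefore a genuinely different, self-contained route built from ingredients already present in the paper: Theorem \ref{th:2} (Silva structure of the germ spaces), Theorem \ref{th:25} (Komatsu), the structure theorem for the $C^{\hol}$-topology, and Martineau's equivalence of the inductive and projective presentations. The permanence arguments you invoke are sound, including your tactical switch between the Banach presentation (for completeness and Hausdorffness) and the Fr\'echet presentation (for nuclearity and separability) --- a necessary switch, since the Banach steps $\Gamma^{\hol}_{\bdd}(T\overline{U}_n)$, being sup-normed spaces of bounded holomorphic sections, are not separable. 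Two points deserve tightening: completeness of the compact inductive limit is classical Silva/Komatsu theory, and attributing it to bounded retractivity alone silently uses that a quasi-complete (DF)-space is complete; and the closedness of $\mathscr{G}^{\hol,\R}_K$ in $\mathscr{G}^{\hol}_K$ via evaluations requires the identity-theorem remark that it suffices to impose $X(x)\in T_xM$ at points $x\in K\cap M$, since a germ does not determine values away from $K$. The real added value of your route is the separability of the projective limit: you correctly identify that ``closed subspace of separable'' is not a valid inference for non-metrizable spaces --- a trap the paper itself falls into in its proof of the analogous theorem for $\mathrm{L}(C^{\omega}(N);C^{\omega}(M))$, where exactly this implication is cited from \cite[Theorem 16.4]{SW:2004} --- and your countable-network (cosmic space) argument repairs it, since countable networks pass to countable products and arbitrary subspaces and imply separability. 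What the paper's citation buys is brevity; what your route buys is a proof from the paper's own toolkit, at the cost of importing Michael's $\aleph_0$-space machinery (or, in your second route, a Runge-type approximation that would still have to be written out in full).
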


As is shown in Theorem \ref{th:9} the real analytic vector fields are exactly the derivations of the $\R$-algebra
$C^{\omega}(M)$. Since derivations of $C^{\omega}(M)$ are linear mappings from
$C^{\omega}(M)$ to $C^{\omega}(M)$, it would be interesting to study
the more general space of linear mapping from $C^{\omega}(N)$ to $C^{\omega}(M)$. 

\begin{definition}
Let $M$ and $N$ be real analytic manifolds. The space of linear mapping
from $C^{\omega}(N)$ to $C^{\omega}(M)$ is denoted by $\mathrm{L}(C^{\omega}(N);C^{\omega}(M))$. 
\end{definition}

One can define different topologies on
$\mathrm{L}(C^{\omega}(N);C^{\omega}(M))$, using the
$C^{\omega}$-topologies on the spaces $C^{\omega}(M)$ and
$C^{\omega}(N)$. In this section, we focus on the topology of
pointwise convergence on $\mathrm{L}(C^{\omega}(N);C^{\omega}(M))$. We
will see that this topology is consistent with the $C^{\omega}$-topology on $\Gamma^{\omega}(TM)$.

\begin{definition}
For $f\in C^{\omega}(M)$, we define the map $\mathscr{L}_{f}:\LC{M}{N}{\omega}\to
C^{\omega}(N)$ as 
\begin{equation*}
\mathscr{L}_{f}(X)=X(f).
\end{equation*}
The \textbf{topology of pointwise convergence} on $\LC{M}{N}{\omega}$ is the projective topology with
respect to the family $\{C^{\omega}(N),\mathscr{L}_f\}_{f\in C^{\omega}(M)}$.
\end{definition}

It can be shown that
$\mathrm{L}(C^{\omega}(N);C^{\omega}(M))$ equipped with the topology of pointwise convergence has many nice properties. 

\begin{theorem}
The vector space $\mathrm{L}(C^{\omega}(N);C^{\omega}(M))$ with the
topology of pointwise convergence is a Hausdorff, separable, complete, and nuclear
locally convex space.
\end{theorem}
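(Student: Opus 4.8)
The plan is to establish each of the four properties (Hausdorff, separable, complete, nuclear) by transporting them across the projective limit that defines the topology of pointwise convergence. The key observation is that the topology on $\LC{M}{N}{\omega}$ is, by definition, the initial topology with respect to the maps $\{\mathscr{L}_f\}_{f\in C^{\omega}(M)}$, each landing in $C^{\omega}(N)$. Since $C^{\omega}(N)$ is already known by Theorem \ref{th:60} to be Hausdorff, separable, complete, and nuclear, the strategy is to realize $\LC{M}{N}{\omega}$ as a closed subspace of the product $\prod_{f\in C^{\omega}(M)} C^{\omega}(N)$ via the embedding $X\mapsto (\mathscr{L}_f(X))_{f\in C^{\omega}(M)} = (X(f))_{f}$, and then invoke the standard permanence properties of each topological-vector-space property under products and closed subspaces.

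First I would verify that the evaluation map $\Phi\colon \LC{M}{N}{\omega}\to \prod_{f} C^{\omega}(N)$ given by $\Phi(X)=(X(f))_f$ is a linear topological embedding onto its image: it is injective because a linear map $X$ is determined by its values $X(f)$ on all $f\in C^{\omega}(M)$, and it is a homeomorphism onto its image precisely because the topology of pointwise convergence is the initial topology making all $\mathscr{L}_f$ continuous, which is exactly the subspace topology inherited from the product. Hausdorffness is then immediate: the product of Hausdorff spaces is Hausdorff, subspaces of Hausdorff spaces are Hausdorff, and injectivity of $\Phi$ transports this back to $\LC{M}{N}{\omega}$. Nuclearity follows from the fact that arbitrary products of nuclear spaces are nuclear and that subspaces of nuclear spaces are nuclear \cite[Chapter III, \S 7]{schaefer}.

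Next I would handle completeness, which is where the image being \emph{closed} becomes essential. The claim is that $\Phi(\LC{M}{N}{\omega})$ is closed in the product. A net $(X_\alpha)$ with $\Phi(X_\alpha)\to (g_f)_f$ converges pointwise, so for each $f$ we have $X_\alpha(f)\to g_f$ in $C^{\omega}(N)$; defining $X(f):=g_f$, linearity of the limit functional follows from passing to the limit in $X_\alpha(af+bh)=aX_\alpha(f)+bX_\alpha(h)$, so the limit point lies in the image, giving closedness. A closed subspace of a complete space is complete, and arbitrary products of complete locally convex spaces are complete, so completeness transfers. Separability is the one property that does \emph{not} pass to arbitrary products, so here I would instead argue directly, using that $C^{\omega}(M)$ and $C^{\omega}(N)$ are separable nuclear (hence, being complete and nuclear, also Fr\'echet-type or at least having a well-behaved dual structure) to identify $\LC{M}{N}{\omega}$ with a space built from the completed tensor product $C^{\omega}(M)'_\beta \widehat{\otimes}_\epsilon C^{\omega}(N)$ or a similar explicit description, whose separability can be read off from separability of the factors.

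The main obstacle I expect is separability, since it is precisely the property with the worst permanence behavior under products — uncountable products of nontrivial separable spaces are typically not separable. The clean route is to avoid the product argument for this one property and instead exploit nuclearity: for nuclear spaces the space of continuous linear maps admits a tensor-product representation, $\LC{M}{N}{\omega}\cong (C^{\omega}(M))'_\beta \widehat{\otimes}_\epsilon C^{\omega}(N)$, and the completed tensor product of two separable spaces is separable because the algebraic tensor product of countable dense sets is a countable dense set in the projective (hence in the weaker injective) topology, and completion of a separable space remains separable. Care is needed to confirm that the topology of pointwise convergence coincides with the relevant tensor-product topology under the nuclearity hypothesis; once that identification is in hand, all four properties follow, with completeness and Hausdorffness from the closed-subspace-of-a-product argument, nuclearity from permanence, and separability from the tensor-product description.
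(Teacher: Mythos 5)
Your treatment of Hausdorffness, completeness, and nuclearity is exactly the paper's argument: the paper too realizes $\LC{M}{N}{\omega}$ as a closed subspace of the product $C^{\omega}(N)^{C^{\omega}(M)}$, proves closedness by checking that a pointwise limit of linear maps is linear, and then invokes permanence of these three properties under products and (closed) subspaces. The genuine gap in your proposal is the fourth property, separability, where you abandon the paper's route in favor of an alternative that does not work.

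First, you dismiss the product argument on the premise that ``uncountable products of nontrivial separable spaces are typically not separable.'' That premise is false in the relevant range of cardinalities: by the Hewitt\textendash Marczewski\textendash Pondiczery theorem (this is the result the paper cites from Willard), a product of at most $\mathfrak{c}$ (continuum-many) separable spaces \emph{is} separable. The paper's proof of separability consists precisely of this theorem together with the one nontrivial observation you never make: a cardinality estimate on the index set. Since $C^{\omega}(M)\subseteq C^{0}(M)$ and $M$ is second countable, the cardinality of $C^{0}(M)$, and hence of $C^{\omega}(M)$, is at most $\mathfrak{c}$; therefore $C^{\omega}(N)^{C^{\omega}(M)}$ is a product of at most $\mathfrak{c}$ separable spaces and is separable, and the paper then transfers separability to the subspace $\LC{M}{N}{\omega}$. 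Without the cardinality bound there is no proof of separability in your proposal at all.

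Second, the substitute you offer cannot be repaired. The space $\LC{M}{N}{\omega}$, as defined in the paper, consists of \emph{all} linear maps from $C^{\omega}(M)$ to $C^{\omega}(N)$, with no continuity requirement; this is exactly why the closedness argument only needs to verify linearity of the limit, and why completeness can hold at all (a pointwise limit of continuous linear maps need not be continuous, so the continuous maps would not form a closed, hence not a complete, subspace in this topology). Consequently there is no identification of $\LC{M}{N}{\omega}$ with $\left(C^{\omega}(M)\right)'_{\beta}\widehat{\otimes}_{\epsilon}C^{\omega}(N)$: the latter is built from continuous functionals and, whatever operator space it represents, it lies inside the continuous linear maps. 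Moreover, even restricted to continuous maps, the injective tensor product topology is one of uniform convergence on suitable equicontinuous or precompact sets, not the topology of pointwise convergence, so the coincidence of topologies you flag as ``care is needed'' is not a technical verification you postponed but a statement that is false. The separability claim therefore remains unproved in your proposal, while the remaining three properties are established exactly as in the paper.
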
  
\begin{proof}
We show that $\LC{M}{N}{\omega}$ is a closed subspace of
$C^{\omega}(N)^{C^{\omega}(M)}$, if we equip the latter space with its
natural topology of pointwise convergence. Let $\{X_{\alpha}\}_{\alpha\in \Lambda}$ be
a converging net in $\LC{M}{N}{\omega}$ with the limit $X\in
C^{\omega}(N)^{C^{\omega}(M)}$. We show that $X$ is linear. Let $f,g\in C^{\omega}(M)$ and $c\in \mathbb{F}$. Then we have 
\begin{equation*}
X_{\alpha}(f+cg)=X_{\alpha}(f)+cX_{\alpha}(g),\qquad\forall \alpha\in \Lambda.
\end{equation*} 
By taking limit on $\alpha$, we get
\begin{equation*}
X(f+cg)=X(f)+cX(g).
\end{equation*} 
This implies that $X$ is linear and therefore
$\mathrm{L}(C^{\omega}(N);C^{\omega}(M))$ is a closed subspace of
$C^{\omega}(N)^{C^{\omega}(M)}$.

Since $C^{\omega}(N)$ is Hausdorff, it is clear that
$C^{\omega}(N)^{C^{\omega}(M)}$ is Hausdorff. This implies that
$\LC{M}{N}{\omega}\subseteq C^{\omega}(N)^{C^{\omega}(M)}$ is Hausdorff.
Let $\mathbf{c}$ be the cardinality of the continuum. Note
that $C^{\omega}(M)\subseteq C^{0}(M)$ and $M$ is second countable and
hence separable. This implies
that the cardinality of $C^{0}(M)$
is $\mathbf{c}$ \cite[Chapter 5, Theorem 2.6(a)]{KH-JT:1999}. Therefore, the cardinality of $C^{\omega}(M)$ is at most $\mathbf{c}$. The
product of $\mathbf{c}$ separable spaces is separable
\cite[Theorem 16.4(c)]{SW:2004}. This implies that
$C^{\omega}(N)^{C^{\omega}(M)}$ is separable. Since $\LC{M}{N}{\omega}$ is a
closed subspace of $C^{\omega}(N)^{C^{\omega}(M)}$, it is separable
\cite[Theorem 16.4]{SW:2004}. Note that $C^{\omega}(N)$ is complete. This implies that
$C^{\omega}(N)^{C^{\omega}(M)}$ is complete \cite[Chapter II, \S5.3]{schaefer}. Since $\LC{M}{N}{\omega}$ is a closed
subspace of $C^{\omega}(N)^{C^{\omega}(M)}$, it is complete. The product of
any arbitrary family of nuclear locally convex vector spaces
is nuclear \cite[Chapter III, \S7.4]{schaefer}. This implies that $C^{\omega}(N)^{C^{\omega}(M)}$ is
nuclear. Since every closed subspace of nuclear space is nuclear
\cite[Chapter III, \S7.4]{schaefer},
$\LC{M}{N}{\omega}$ is also nuclear. 
\end{proof}

We have already mentioned that real analytic vector fields on
$M$ are exactly derivations on $\Gamma^{\omega}(TM)$. Thus, we have
\begin{equation*}
\Gamma^{\omega}(TM)\subseteq \mathrm{L}(C^{\omega}(N);C^{\omega}(M)).
\end{equation*}
Therefore, the topology of pointwise convergence on
$\mathrm{L}(C^{\omega}(N);C^{\omega}(M))$ will induce a subspace
topology on $\Gamma^{\omega}(TM)$. It is interesting to note that this
subspace topology on $\Gamma^{\omega}(TM)$ and the $C^{\omega}$-topology on
$\Gamma^{\omega}(TM)$ are the same \cite[Theorem 5.8]{SJ-ADL:2014}.

\begin{theorem}
The $C^{\omega}$-topology on $\Gamma^{\omega}(TM)$ coincides with the
subspace topology form $\mathrm{L}(C^{\omega}(N);C^{\omega}(M))$.
\end{theorem}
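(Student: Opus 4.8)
The plan is to prove that the two locally convex topologies on $\Gamma^{\omega}(TM)$ agree by establishing mutual continuity of the identity map. Throughout I identify a real analytic vector field $X$ with its derivation $\widehat{X}$, so that $\Gamma^{\omega}(TM)\subseteq\mathrm{L}(C^{\omega}(M);C^{\omega}(M))$ and the subspace topology of pointwise convergence is the initial (projective) topology generated by the maps $X\mapsto\widehat{X}(f)=df(X)$, with $f$ ranging over $C^{\omega}(M)$ and each map landing in $C^{\omega}(M)$ endowed with its $C^{\omega}$-topology. I write $\tau_{\mathrm{pw}}$ for this topology and $\tau_{\omega}$ for the $C^{\omega}$-topology.

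First I would establish $\tau_{\mathrm{pw}}\subseteq\tau_{\omega}$, which by the projective definition of $\tau_{\mathrm{pw}}$ reduces to checking that, for each fixed $f\in C^{\omega}(M)$, the Lie derivative $X\mapsto df(X)$ is continuous from $(\Gamma^{\omega}(TM),\tau_{\omega})$ to $(C^{\omega}(M),\tau_{\omega})$. Using the projective-limit description over a compact exhaustion and the inductive-limit description over complex neighbourhoods, it suffices to work at the Banach level of Theorem \ref{th:17}: if $\overline{f}$ is a holomorphic extension of $f$ to a neighbourhood $\overline{U}$ of a compact set and $\overline{X}\in\Gamma^{\hol}_{\bdd}(T\overline{U})$, then $d\overline{f}(\overline{X})$ is a holomorphic extension of $df(X)$, and on any $\overline{U}'$ with $\mathrm{cl}(\overline{U}')\subseteq\overline{U}$ the Cauchy estimates bound the first derivatives of $\overline{f}$, yielding $p_{\overline{U}'}(d\overline{f}(\overline{X}))\le C\,p_{\overline{U}}(\overline{X})$ for a constant $C$ depending only on $f,\overline{U},\overline{U}'$. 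Continuity on the limit spaces then follows from the universal properties of the inductive and projective limits.

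For the reverse inclusion I would use the Grauert embedding $M\hookrightarrow\R^{N}$ already invoked in Theorems \ref{th:10} and \ref{th:1}, whose ambient coordinates restrict to $\widehat{x}_1,\dots,\widehat{x}_N\in C^{\omega}(M)$, and consider the linear map
\begin{equation*}
\Phi:\Gamma^{\omega}(TM)\to C^{\omega}(M)^{N},\qquad \Phi(X)=\bigl(\widehat{X}(\widehat{x}_1),\dots,\widehat{X}(\widehat{x}_N)\bigr).
\end{equation*}
Since the $\widehat{x}_i$ are restrictions of coordinates of an embedding, the $\widehat{X}(\widehat{x}_i)$ are exactly the components of the pushed-forward field, so $\Phi$ is injective with image the closed linear subspace of $C^{\omega}(M)^{N}$ cut out by the tangency conditions along $M$. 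Each component of $\Phi$ equals $\mathscr{L}_{\widehat{x}_i}$, so the initial topology $\tau_{\Phi}$ that $\Phi$ pulls back from the product $C^{\omega}$-topology satisfies $\tau_{\Phi}\subseteq\tau_{\mathrm{pw}}$, while the first step applied to $f=\widehat{x}_i$ gives $\tau_{\Phi}\subseteq\tau_{\omega}$. It therefore suffices to prove $\tau_{\omega}\subseteq\tau_{\Phi}$, i.e. that $\Phi$ is a topological embedding for the $C^{\omega}$-topology.

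This last claim is where I expect the main obstacle, since it requires reconstructing the $C^{\omega}$-topology of the vector field from its $N$ scalar components. I would prove it at the level of the defining seminorms by reversing the estimate of the first step: given a compact $K\subseteq M$ and a complex neighbourhood $\overline{U}$, the coordinate functions extend holomorphically, and component extensions $\overline{X^{i}}\in\Gamma^{\hol}_{\bdd}$ reassemble into the holomorphic field $\sum_i\overline{X^{i}}\,\partial/\partial z^{i}$ near $K$ in $M^{\C}$; because $M$ is totally real in $M^{\C}$ and the embedding is real analytic, tangency propagates by the identity theorem, so this ambient field restricts to a holomorphic extension $\overline{X}$ of $X$ with $p_{\overline{U}'}(\overline{X})\le C'\sum_i p_{\overline{U}}(\overline{X^{i}})$ on a slightly shrunk neighbourhood. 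Transporting this bound through the inductive limits over complex neighbourhoods and the projective limit over the compact exhaustion shows every generating $\tau_{\omega}$-seminorm is dominated by $\tau_{\Phi}$-seminorms, giving $\tau_{\omega}\subseteq\tau_{\Phi}$. Combining everything yields $\tau_{\omega}=\tau_{\Phi}\subseteq\tau_{\mathrm{pw}}\subseteq\tau_{\omega}$, hence equality. The delicate point throughout is the uniform control of the holomorphic extension domains when passing between $X$ and its components, so that the estimates remain compatible with the compact (DFS)-type inductive limits of Theorem \ref{th:2} underlying the $C^{\omega}$-topology.
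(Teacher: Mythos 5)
For calibration: the paper itself gives no proof of this theorem---it is stated with a citation to \cite[Theorem 5.8]{SJ-ADL:2014}---so your argument can only be judged on its own terms; your embedding-based strategy is in fact close in spirit to that reference. Your first inclusion $\tau_{\mathrm{pw}}\subseteq\tau_{\omega}$ is correct: by the universal property of the inductive limit defining the $C^{\omega}$-topology, continuity of each $\mathscr{L}_f$ reduces to a Cauchy-estimate bound at the level of the spaces $\Gamma^{\hol}_{\bdd}(T\overline{U})$ of Theorem \ref{th:17}, as you say. The reduction of the hard inclusion to showing that $\Phi(X)=(\widehat{X}(\widehat{x}_1),\dots,\widehat{X}(\widehat{x}_N))$ is a topological embedding is also sound, and your tangency-propagation idea (the identity theorem on the maximally totally real $M\subseteq M^{\C}$) is the right mechanism for reassembling component extensions into a vector field extension. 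Note only that $\sum_i\overline{X^{i}}\,\partial/\partial z^{i}$ literally lives on an open subset of $\C^N$, while your $\overline{X^{i}}$ live on a neighbourhood in $M^{\C}$; you must either extend them to a tube by a holomorphic retraction onto $M^{\C}$, or solve pointwise the overdetermined condition $(\overline{X^{1}}(z),\dots,\overline{X^{N}}(z))\in T_zM^{\C}\subseteq\C^N$, whose validity propagates from $M$ by the identity theorem. That is a fixable formulation issue.

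The genuine gap is in the last step, ``transporting this bound through the inductive limits.'' Your estimate $p_{\overline{U}'}(\overline{X})\le C'\sum_i p_{\overline{U}}(\overline{X^{i}})$ is a stepwise statement: it says the inverse of $\Phi$ maps the intersection of the image with each Banach step of the germ space continuously into some Banach step of $\mathscr{G}^{\hol,\R}_K$. What you need is continuity of $\Phi^{-1}$ for the topology the image inherits as a subspace of the inductive limit, equivalently that every $\tau_{\omega}$-continuous seminorm is dominated by a $\tau_{\Phi}$-continuous one. For a countable inductive limit these are not the same thing: the induced topology on a (even closed) subspace can be strictly coarser than the inductive limit topology of the intersected steps, and a seminorm that is continuous on every intersected step need not extend to a continuous seminorm on the ambient LB-space. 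Nothing quoted in the paper bridges this: regularity and bounded retractivity (Theorems \ref{th:25} and \ref{th:24}) concern bounded sets of the ambient limit, not subspace topologies. To close the gap you must invoke the structure theory of compact (Silva/DFS) inductive limits---a closed subspace of such a limit is again the compact inductive limit of the intersections, with matching topology; equivalently, run an open-mapping argument for the continuous bijection $\Phi_K$ from $\mathscr{G}^{\hol,\R}_K$ onto its closed image---using the compactness supplied by Theorem \ref{th:2}. Alternatively, and more in line with the tools this paper actually develops, you can avoid germ spaces altogether: on a chart, invert the full-rank matrix $(\partial\widehat{x}_i/\partial x^j)$ by Cramer's rule to write the components $X^j$ as finite sums $\sum_i c^j_i\,\widehat{X}(\widehat{x}_i)$ with fixed real analytic coefficients, and then dominate $p^{\omega}_{K,\mathbf{a}}(X)$ by seminorms of the $\widehat{X}(\widehat{x}_i)$ via a Leibniz estimate of exactly the type proved in Theorem \ref{th:54}, with the shifted sequences of Lemma \ref{lem:4} absorbing the products. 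As written, though, your final step asserts a seminorm domination that your stepwise bounds do not yet deliver.
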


Thus, it is reasonable to denote the topology of pointwise convergence on
$\mathrm{L}(C^{\omega}(N);C^{\omega}(M))$ by the $C^{\omega}$-topology.

It is well-known every locally convex topology can be
characterized using a family of seminorms \cite[Theorem
1.37]{rudinfunctional}.  Since the vector space $\Gamma^{\omega}(TM)$
equipped with the $C^{\omega}$-topology is a locally
convex space, it would be interesting to provide an explicit family of
seminorm for the locally convex space $\Gamma^{\omega}(TM)$. As to our
knowledge, the first characterization of the space of germs of holomorphic
functions on compact subsets of $\C^n$ using an explicit family of
seminorms has been developed in \cite{JM:1984}. In the notes \cite{PD:2010}, a family of seminorm on
$\Gamma^{\omega}(TM)$ has been introduced and it has been mentioned
that the $C^{\omega}$-topology on $C^{\omega}(M)$ is generated by this family of seminorms. For the case $M=\R$, the complete
proof of the fact that this family of seminorms generates the
$C^{\omega}$-topology on $C^{\omega}(\R)$ has been
given in \cite{DV:2013}. Using the idea of the
proof in \cite{DV:2013}, a complete characterization of the locally
convex space $\Gamma^{\omega}(TM)$ using a family of seminorm has been
given in \cite{SJ-ADL:2014}. In this section, we provide a family of
seminorms for the $C^{\omega}$-topology on the space $\mathrm{L}(C^{\omega}(M);
C^{\omega}(N))$. Since $\Gamma^{\omega}(TM)$ can be considered as a
subspace of  $\mathrm{L}(C^{\omega}(M);
C^{\omega}(M))$, this family of seminorms also gives a family of
generating seminorms for the $C^{\omega}$-topology on $\Gamma^{\omega}(TM)$. 

\begin{definition}
Let $\mathbf{c}^{\downarrow}_{0}(\Z_{\ge 0},\R_{>0}, d)$ denote the
set of all decreasing sequences $\{a_n\}_{n\in \Z_{\ge 0}}$ such that,
for every $n\in \Z_{\ge 0}$, we have $0<a_n\le d$ and 
\begin{equation*}
\lim_{n\to\infty} a_n=0.
\end{equation*}
\end{definition}
\begin{definition}
Let $U$ be a coordinate chart on $N$, $K\subseteq U$ be a compact
set, $\mathbf{a}\in \mathbf{c}^{\downarrow}_{0}(\Z_{\ge 0},\R_{>0},
d)$, and $f\in C^{\omega}(M)$. Then, for every $X\in
\mathrm{L}(C^{\omega}(M); C^{\omega}(N))$, we define 
\begin{equation*}
p^{\omega}_{K,\mathbf{a},f}(X)=\left\{\frac{a_0a_1\ldots
    a_{|r|}}{|(r)|!}\left\|D^{(r)}Xf(x)\right\|\suchthat |(r)|\in \Z_{\ge
    0}, \ x\in K\right\}
\end{equation*}
\end{definition}

Using \cite[Theorem 5.5]{SJ-ADL:2014}, we have

\begin{theorem}
The family of seminorms $\{p^{\omega}_{K,\mathbf{a},f}\}$ generates
the $C^{\omega}$-topology on $\mathrm{L}(C^{\omega}(M); C^{\omega}(N))$
\end{theorem}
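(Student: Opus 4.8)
The plan is to recognize the seminorms $p^{\omega}_{K,\mathbf{a},f}$ as the pullbacks, along the evaluation maps $\mathscr{L}_f$, of a generating family of seminorms for the $C^{\omega}$-topology on the target space $C^{\omega}(N)$, and then to invoke the standard description of an initial (projective) locally convex topology in terms of such pullbacks. Recall that, by definition, the topology of pointwise convergence on $\LC{M}{N}{\omega}$ is the projective topology with respect to the family $\{\mathscr{L}_f\}_{f\in C^{\omega}(M)}$, where $\mathscr{L}_f(X)=X(f)\in C^{\omega}(N)$. A general and elementary fact about locally convex spaces is that if a space $E$ carries the coarsest locally convex topology making a family of linear maps $u_i\colon E\to F_i$ continuous, and if the topology of each $F_i$ is generated by a family of seminorms $\{q_{i,j}\}_j$, then the topology of $E$ is generated by the family $\{q_{i,j}\scirc u_i\}_{i,j}$ \cite[Chapter II, \S5]{schaefer}. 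First I would therefore reduce the problem to exhibiting a generating family of seminorms for the $C^{\omega}$-topology on $C^{\omega}(N)$.

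For this second ingredient I would appeal directly to \cite[Theorem 5.5]{SJ-ADL:2014}, which provides that the $C^{\omega}$-topology on $C^{\omega}(N)$ is generated by the family of seminorms
\begin{equation*}
p^{\omega}_{K,\mathbf{a}}(g)=\sup\left\{\frac{a_0a_1\ldots a_{|r|}}{|(r)|!}\left\|D^{(r)}g(x)\right\|\suchthat |(r)|\in \Z_{\ge 0},\ x\in K\right\},
\end{equation*}
indexed by coordinate charts $U$ on $N$, compact sets $K\subseteq U$, and sequences $\mathbf{a}\in \mathbf{c}^{\downarrow}_{0}(\Z_{\ge 0},\R_{>0},d)$. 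With this in hand the identification is immediate: since $\mathscr{L}_f(X)=X(f)=Xf$, we have
\begin{equation*}
\left(p^{\omega}_{K,\mathbf{a}}\scirc\mathscr{L}_f\right)(X)=p^{\omega}_{K,\mathbf{a}}(Xf)=\sup\left\{\frac{a_0a_1\ldots a_{|r|}}{|(r)|!}\left\|D^{(r)}Xf(x)\right\|\suchthat |(r)|\in \Z_{\ge 0},\ x\in K\right\}=p^{\omega}_{K,\mathbf{a},f}(X).
\end{equation*}
Feeding the family $\{p^{\omega}_{K,\mathbf{a}}\}$ into the pullback principle of the previous paragraph, with $u_f=\mathscr{L}_f$ and $F_f=C^{\omega}(N)$ for each $f\in C^{\omega}(M)$, then shows that the projective topology on $\LC{M}{N}{\omega}$ is generated by exactly the family $\{p^{\omega}_{K,\mathbf{a},f}\}$, which is the assertion.

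The routine verifications are that each $p^{\omega}_{K,\mathbf{a},f}$ is indeed a seminorm (inherited from $p^{\omega}_{K,\mathbf{a}}$ being a seminorm and $\mathscr{L}_f$ being linear) and that passing to finite maxima over the indices $(K,\mathbf{a},f)$ yields the same topology, so that no generality is lost by not forming such maxima explicitly. The only genuine point requiring care --- and the step I expect to be the real content --- is the faithful transcription of \cite[Theorem 5.5]{SJ-ADL:2014}: one must confirm that the cited result yields precisely the seminorms $p^{\omega}_{K,\mathbf{a}}$ with $K$ ranging over compact subsets of coordinate charts and $\mathbf{a}$ over $\mathbf{c}^{\downarrow}_{0}(\Z_{\ge 0},\R_{>0},d)$, and that these generate (rather than merely being continuous for) the $C^{\omega}$-topology. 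Granting that, the remainder of the argument is purely formal and reduces to the pullback description of an initial locally convex topology.
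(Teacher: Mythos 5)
Your proposal is correct and matches the paper's intent exactly: the paper offers no argument beyond the citation ``Using \cite[Theorem 5.5]{SJ-ADL:2014}'', and the content it leaves implicit is precisely what you supply, namely that the $C^{\omega}$-topology on $\LC{M}{N}{\omega}$ is by definition the projective topology for the maps $\mathscr{L}_f$, that a projective locally convex topology is generated by the pullbacks of generating seminorms of the targets, and that $p^{\omega}_{K,\mathbf{a}}\scirc\mathscr{L}_f=p^{\omega}_{K,\mathbf{a},f}$. Your caveat about transcribing \cite[Theorem 5.5]{SJ-ADL:2014} faithfully is also the only point of genuine dependence on the literature, just as in the paper.
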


Now, we prove a specific
approximation for the seminorms on $\Gamma^{\omega}(M)$. In section \ref{sec:10},
we will see that this approximation is useful in
studying flows of time-varying real analytic vector fields. Let $d>0$ be a positive real number and $\mathbf{a}\in \mathbf{c}^{\downarrow}_{0}(\Z_{\ge 0},\R_{>0}, d)$. For every $n\in
\N$, we define the sequence
$\mathbf{a}_n=(a_{n,0},a_{n,1},\ldots,a_{n,m},\ldots)$ as 
\begin{equation*}
a_{n,m}=
\begin{cases}
\left(\frac{m+1}{m}\right)^na_m,& m>n,\\
\left(\frac{m+1}{m}\right)^ma_{m},& m\le n.
\end{cases}
\end{equation*}
Associated to every $\mathbf{a}\in \mathbf{c}^{\downarrow}_{0}(\Z_{\ge 0},\R_{>0}, d)$,
we define the sequence $\mathbf{b}_n\in \mathbf{c}^{\downarrow}_{0}(\Z_{\ge 0}, \R_{>0})$ as
\begin{equation*}
b_{n,m}=
\begin{cases}
a_{n,m}, & m=0,m=1,\\
\left(\frac{(m+1)(m+2)}{(m-1)(m)}\right)a_{n,m},& m>1.
\end{cases}
\end{equation*} 
\begin{lemma}\label{lem:4}
Let $\mathbf{a}\in \mathbf{c}^{\downarrow}_{0}(\Z_{\ge 0},\R_{>0},d)$. Then, for every
$n\in \Z_{\ge 0}$, we have $\mathbf{a}_n\in \mathbf{c}^{\downarrow}_{0}(\Z_{\ge
  0},\R_{>0},\textup{e}d)$ and, for every $m,n\in \Z_{\ge 0}$, we have
\begin{eqnarray*}
a_{n,m}&\le& \textup{e}a_m,\\
\frac{(m+1)}{(n+1)}&\le&\frac{(a_{n+1,0})(a_{n+1,1})\ldots
                                    (a_{n+1,m+1})}{(a_{n,0})(a_{n,1})\ldots
                                    (a_{n,m+1})},
\end{eqnarray*}
where $\textup{e}$ is the Euler constant. Moreover, for every $n\in \Z_{\ge 0}$ we have $\mathbf{b}_n\in
\mathbf{c}^{\downarrow}_{0}(\Z_{\ge 0}, \R_{>0},6\textup{e}d)$ and, for every $m>1$, we have 
\begin{eqnarray*}
b_{n,m}&\le & 6\textup{e}a_m,\\
\frac{(a_{n,0})(a_{n,1})\ldots (a_{n,m})}{(m-2)!}&=&\frac{(b_{n,0})(b_{n,1})\ldots (b_{n,m})}{m!}.
\end{eqnarray*}
\end{lemma}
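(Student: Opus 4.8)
The plan is to deduce all four assertions from two elementary facts: the factor $\left(1+\tfrac1m\right)^m$ increases to $\textup{e}$, and the correction weight $\tfrac{(m+1)(m+2)}{(m-1)m}$ is bounded by $6$. I would first record the pointwise bounds. Writing $a_{n,m}=\left(1+\tfrac1m\right)^{\min(m,n)}a_m$ and noting $\min(m,n)\le m$, monotonicity of $k\mapsto\left(1+\tfrac1k\right)^k$ gives $a_{n,m}\le\left(1+\tfrac1m\right)^m a_m\le\textup{e}\,a_m$, which is the first inequality; combined with $a_m\le d$ it yields $a_{n,m}\le\textup{e}d$. For the $\mathbf{b}$-sequence, the weight $\tfrac{(m+1)(m+2)}{(m-1)m}$ decreases in $m$ for $m\ge2$ and equals $6$ at $m=2$, so $b_{n,m}\le6\,a_{n,m}\le6\textup{e}\,a_m\le6\textup{e}d$, which gives the bound on $b_{n,m}$.

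With the bounds established, membership in $\mathbf{c}^{\downarrow}_{0}(\Z_{\ge 0},\R_{>0},\textup{e}d)$ and $\mathbf{c}^{\downarrow}_{0}(\Z_{\ge 0},\R_{>0},6\textup{e}d)$ reduces to positivity, the size bound, the null property, and monotonicity. Positivity is clear since each factor is positive, and $\lim_{m\to\infty}a_{n,m}=0$ follows from $a_{n,m}\le\textup{e}a_m$ and $a_m\to0$ (and similarly for $b_{n,m}$). For monotonicity I would compare consecutive terms by the value of $\min(m,n)$. When $m\ge n$ one has $\min(m,n)=\min(m+1,n)=n$, so $a_{n,m}=\left(1+\tfrac1m\right)^n a_m$ throughout this range; since $1+\tfrac1m$ decreases in $m$ and $a_m$ decreases, the step $a_{n,m+1}\le a_{n,m}$ is automatic. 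The remaining range is $m+1\le n$, where both terms carry the full exponent and one must compare $\left(1+\tfrac1{m+1}\right)^{m+1}a_{m+1}$ with $\left(1+\tfrac1m\right)^m a_m$. Monotonicity of $\mathbf{b}_n$ then follows from that of $\mathbf{a}_n$ together with the decrease of the correction weight, after a separate check at the junctions $m=1\to2$.

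The two product relations I would obtain by telescoping. For the ratio inequality, the point is that $\tfrac{a_{n+1,k}}{a_{n,k}}=\left(1+\tfrac1k\right)^{\min(k,n+1)-\min(k,n)}$, and the exponent difference is $0$ for $k\le n$ and $1$ for $k\ge n+1$. Hence $\prod_{k=0}^{m+1}\tfrac{a_{n+1,k}}{a_{n,k}}$ equals $1$ when $m<n$, and telescopes to $\prod_{k=n+1}^{m+1}\tfrac{k+1}{k}=\tfrac{m+2}{n+1}$ when $m\ge n$; in both cases it dominates $\tfrac{m+1}{n+1}$, which is the asserted inequality. For the factorial identity I would form $\prod_{k=0}^m b_{n,k}\big/\prod_{k=0}^m a_{n,k}$; since $b_{n,k}=a_{n,k}$ for $k\le1$ and $b_{n,k}=\tfrac{(k+1)(k+2)}{(k-1)k}a_{n,k}$ for $k\ge2$, this quotient equals $\prod_{k=2}^m\tfrac{(k+1)(k+2)}{(k-1)k}$, a product of the two telescoping pieces $\prod_{k=2}^m\tfrac{k+1}{k-1}$ and $\prod_{k=2}^m\tfrac{k+2}{k}$, which collapses to the stated ratio of factorials.

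The main obstacle is the monotonicity assertion in the range $m+1\le n$. There the Euler factors $\left(1+\tfrac1m\right)^m$ are strictly increasing in $m$, while $a_m$ is only assumed to be decreasing, so the inequality $a_{n,m+1}\le a_{n,m}$ rests on the decay of $\{a_m\}$ outpacing the growth of these factors; this is the one step where the favorable frozen-exponent cancellation of the range $m\ge n$ is unavailable, and it is where the real content of the membership claims resides. Every other assertion reduces cleanly to the bound $\left(1+\tfrac1m\right)^m\le\textup{e}$ and to telescoping.
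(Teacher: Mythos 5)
Your treatment of the estimates that the paper actually establishes --- the bound $a_{n,m}\le \textup{e}\,a_m\le \textup{e}d$ via $\left(1+\tfrac1m\right)^{\min(m,n)}\le\left(1+\tfrac1m\right)^m\le \textup{e}$, the bound $b_{n,m}\le 6a_{n,m}\le 6\textup{e}\,a_m$ from $\tfrac{(m+1)(m+2)}{(m-1)m}\le 6$ for $m\ge 2$, the null limits, and the ratio inequality via the case split (all ratios equal to $1$ when $m<n$; telescoping $\prod_{k=n+1}^{m+1}\tfrac{k+1}{k}=\tfrac{m+2}{n+1}$ when $m\ge n$) --- coincides with the paper's own proof, which argues in exactly this way.

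However, two of your steps fail. First, the factorial identity: the quotient $\prod_{k=2}^{m}\tfrac{(k+1)(k+2)}{(k-1)k}$ is the right thing to compute, but it does not collapse to $\tfrac{m!}{(m-2)!}=m(m-1)$; your two telescoping pieces give $\tfrac{m(m+1)}{2}\cdot\tfrac{(m+1)(m+2)}{6}=\tfrac{m(m+1)^2(m+2)}{12}$, which at $m=2$ equals $6$ while the identity demands $2$. So your claimed verification is wrong; in fact the identity as printed (both products running to index $m$, both with subscript $n$) is false for every $m>1$, and the paper's own proof never addresses it at all --- it stops after the membership claims. Second, the monotonicity that you isolate as ``the main obstacle'' is not a missing estimate but a false assertion: take $a_1=1$, $a_2=0.99$ (completed to any decreasing null sequence bounded by $d$) and $n\ge2$; then $a_{n,1}=2<\left(\tfrac32\right)^2\cdot 0.99=a_{n,2}$, so $\mathbf{a}_n$ need not be decreasing in the range $m+1\le n$. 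Likewise $\mathbf{b}_n$ fails at the junction $m=1\to 2$ whenever $6a_{n,2}>a_{n,1}$, so the ``separate check'' you defer to cannot succeed even granting monotonicity of $\mathbf{a}_n$. The paper's proof silently ignores the decreasing requirement (it verifies only positivity, the size bound, and the null limit before asserting membership in $\mathbf{c}^{\downarrow}_{0}$), so you were more careful than the paper in flagging this gap --- but the correct resolution is a counterexample to the membership claims as literally stated (or a weakening of the definition of $\mathbf{c}^{\downarrow}_{0}$), not a provable step; only the bounds, the null limits, and the ratio inequality survive as stated.
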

\begin{proof}
Let $\mathbf{a}\in \mathbf{c}^{\downarrow}_{0}(\Z_{\ge 0},\R_{>0},d)$. Then by
definition of $\mathbf{a}_n$, for $n<m$, we have 
\begin{equation*}
a_{n,m}=\left(\frac{m+1}{m}\right)^na_m\le
\left(\frac{m+1}{m}\right)^ma_m\le \textup{e}a_m
\end{equation*}
For $n\ge m$, we have
\begin{equation*}
a_{n,m}=\left(\frac{m+1}{m}\right)^ma_m\le \textup{e}a_m.
\end{equation*}
This implies that $\lim_{m\to\infty} a_{n,m}=0$. Moreover, for every
$m,n\in\Z_{\ge 0}$, we have 
\begin{equation*}
a_{n,m}\le \textup{e}a_m\le \textup{e}d.
\end{equation*}
So we have $\mathbf{a}_n\in \mathbf{c}^{\downarrow}_{0}(\Z_{\ge
  0},\R_{>0},\textup{e}d)$. 
Let $m,n\in \Z_{\ge 0}$ be such that $n+1>m+1$. Then we have 
\begin{equation*}
\frac{a_{n+1,m+1}}{a_{n,m+1}}=1.
\end{equation*}
So we get 
\begin{equation*}
\frac{(a_{n+1,0})(a_{n+1,1})\ldots (a_{n+1,m+1})}{(a_{n,0})(a_{n,1})\ldots
  (a_{n,m+1})}\ge 1.
\end{equation*}
Since we have $\mathbf{a}_n\in \mathbf{c}^{\downarrow}_{0}(\Z_{\ge 0},\R_{>0}, \textup{e}d)$, we
get 
\begin{equation*}
\frac{(a_{n+1,0})(a_{n+1,1})\ldots
  (a_{n+1,m+1})}{(a_{n,0})(a_{n,1})\ldots (a_{n,m+1})}\ge 1\ge \frac{m+1}{n+1}.
\end{equation*}
Now suppose that $m,n\in \Z_{\ge 0}$ are such that $n+1\le m+1$. Then
we have
\begin{equation*}
\frac{a_{n+1,m+1}}{a_{n,m+1}}=\left(\frac{m+1}{m}\right).
\end{equation*}
Therefore, we get 
\begin{equation*}
\frac{(a_{n+1,0})(a_{n+1,1})\ldots (a_{n+1,m+1})}{(a_{n,0})(a_{n,1})\ldots
  (a_{n,m+1})}=\left(\frac{n+2}{n+1}\right)
\left(\frac{n+3}{n+2}\right)\ldots
\left(\frac{m+2}{m+1}\right)=\frac{m+2}{n+1}>\frac{m+1}{n+1}.
\end{equation*}
Since we have $\mathbf{a}_n\in \mathbf{c}^{\downarrow}_{0}(\Z_{\ge 0},\R_{>0}, \textup{e}d)$, we
get 
\begin{equation*}
\frac{(a_{n+1,0})(a_{n+1,1})\ldots (a_{n+1,m+1})}{(a_{n,0})(a_{n,1})\ldots
  (a_{n,m})}\ge \frac{m+1}{n+1}.
\end{equation*}
So, for all $m,n\in \Z_{\ge 0}$, we have
\begin{equation*}
\frac{(a_{n+1,0})(a_{n+1,1})\ldots (a_{n+1,m+1})}{(a_{n,0})(a_{n,1})\ldots
  (a_{n,m+1})}\ge \frac{m+1}{n+1}.
\end{equation*}
Finally, since $\mathbf{a}_n\in \mathbf{c}^{\downarrow}_{0}(\Z_{\ge
  0},\R_{>0},\textup{e}d)$ and we have $\frac{(m+2)(m+1)}{m(m-1)}\le
6$, for all $m>1$, we get
\begin{equation*}
b_{n,m}=\frac{(m+2)(m+1)}{m(m-1)}a_{n,m}\le 6a_{n,m}.
\end{equation*}
So we have $\lim_{m\to\infty} b_{n,m}=6\lim_{m\to\infty}
a_{n,m}=0$. Moreover, we have 
\begin{equation*}
b_{n,m}\le 6a_{n,m}\le 6\textup{e}a_m\le 6\textup{e}d.
\end{equation*}
Thus we get $\mathbf{b}_n\in \mathbf{c}^{\downarrow}_{0}(\Z_{\ge
  0},\R_{>0},6\textup{e}{d})$. This completes the proof of the lemma.
\end{proof}
\begin{theorem}\label{th:54}
Let $M$ be a real analytic manifold of dimension $N$, $X\in
\Gamma^{\omega}(TM)$, and $f\in C^{\omega}(M)$. Let $U$ be a
coordinate neighbourhood in $M$ and $K\subseteq U$ be compact. For every $d>0$,
every $\mathbf{a}\in \mathbf{c}^{\downarrow}_{0}(\Z_{\ge
  0};\R_{>0},d)$, and every $n\in \Z_{\ge 0}$, we have
\begin{equation}\label{eq:6}
p^{\omega}_{K,\mathbf{a}_n}(X(f))\le 4N(n+1)\max_{i}\{p^{\omega}_{K,\mathbf{b}_{n}}(X^i)\} p^{\omega}_{K,\mathbf{a}_{n+1}}(f).
\end{equation}
\end{theorem}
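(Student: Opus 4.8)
The plan is to reduce everything to the single coordinate chart $U$, expand $X(f)$ by the Leibniz rule, bound each resulting factor by the appropriate seminorm, and then control the resulting convolution sum by means of Lemma \ref{lem:4}.

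First I would work in the coordinate neighbourhood $U$ and write $X=\sum_{i=1}^N X^i\frac{\partial}{\partial x^i}$, so that $X(f)=\sum_{i=1}^N X^i\frac{\partial f}{\partial x^i}$ with each component $X^i$ real analytic on $U$. Fixing a multi-index $(r)$ and a point $x\in K$, the Leibniz rule gives
\begin{equation*}
D^{(r)}(X(f))(x)=\sum_{i=1}^N\sum_{(s)\le(r)}\binom{(r)}{(s)}D^{(s)}X^i(x)\,D^{(r)-(s)+\widehat{(i)}}f(x),
\end{equation*}
where I have used $D^{(r)-(s)}\bigl(\frac{\partial f}{\partial x^i}\bigr)=D^{(r)-(s)+\widehat{(i)}}f$, a derivative of order $|(r)|-|(s)|+1$.

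Next, abbreviating $A_m(k)=a_{m,0}a_{m,1}\cdots a_{m,k}$, $B_m(k)=b_{m,0}b_{m,1}\cdots b_{m,k}$, $R=|(r)|$ and $S=|(s)|$, I would insert the defining bounds of the seminorms, namely
\begin{align*}
\norm{D^{(s)}X^i(x)}&\le\frac{S!}{B_n(S)}\,p^{\omega}_{K,\mathbf{b}_n}(X^i),\\
\norm{D^{(r)-(s)+\widehat{(i)}}f(x)}&\le\frac{(R-S+1)!}{A_{n+1}(R-S+1)}\,p^{\omega}_{K,\mathbf{a}_{n+1}}(f).
\end{align*}
Grouping the Leibniz sum by the value of $S$, the Vandermonde identity $\sum_{(s)\le(r),\,|(s)|=S}\binom{(r)}{(s)}=\binom{R}{S}$ together with the elementary simplification $\binom{R}{S}\,S!\,(R-S+1)!=R!\,(R-S+1)$ collapses the estimate to
\begin{equation*}
\frac{A_n(R)}{R!}\norm{D^{(r)}(X(f))(x)}\le N\max_i\{p^{\omega}_{K,\mathbf{b}_n}(X^i)\}\,p^{\omega}_{K,\mathbf{a}_{n+1}}(f)\cdot A_n(R)\sum_{S=0}^R\frac{R-S+1}{B_n(S)\,A_{n+1}(R-S+1)}.
\end{equation*}

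It therefore remains to prove the purely combinatorial inequality
\begin{equation*}
A_n(R)\sum_{S=0}^R\frac{R-S+1}{B_n(S)\,A_{n+1}(R-S+1)}\le 4(n+1),
\end{equation*}
and this is where Lemma \ref{lem:4} does the work and where I expect the main difficulty to lie. The factor $n+1$ I would extract from the second inequality of Lemma \ref{lem:4}: applied with $m=R-S$ it gives $A_{n+1}(R-S+1)\ge\frac{R-S+1}{n+1}A_n(R-S+1)$, so that $\frac{R-S+1}{A_{n+1}(R-S+1)}\le\frac{n+1}{A_n(R-S+1)}$ and the whole $\mathbf{a}_{n+1}$-weight is replaced by an $\mathbf{a}_n$-weight. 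The convergence of the remaining series I would obtain from the factorial identity of Lemma \ref{lem:4}, which rewrites $\frac{1}{B_n(S)}$ in terms of $\frac{1}{A_n(S)}$ with an extra decaying factor of order $\frac{1}{S(S-1)}$; combined with the monotonicity of the sequence $\mathbf{a}_n$ (used to control the ratio $\frac{A_n(R)}{A_n(S)A_n(R-S+1)}$) and the tail sum $\sum_{S\ge2}\frac{1}{S(S-1)}=1$, the terms with $S\ge2$ contribute a bounded constant, while the two low-order terms $S=0$ and $S=1$ must be estimated separately. Collecting these contributions produces the constant $4$, and I anticipate that the careful bookkeeping of the low-order terms and of the ratios of the $A_n(\cdot)$ is the only genuinely delicate point; taking the supremum over $(r)$ and $x\in K$ then yields \eqref{eq:6}.
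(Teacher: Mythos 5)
Your opening is sound and matches the paper: the coordinate expansion, the Leibniz estimate grouped by total order (your multi-index Leibniz plus Vandermonde is equivalent to the bound the paper proves by induction on $|r|$), and the insertion of the seminorm weights. The gap is in the endgame. The combinatorial inequality you reduce everything to is false, and no separate treatment of the low-order terms can save it. Writing, as you do, $A_m(k)=a_{m,0}a_{m,1}\cdots a_{m,k}$ and $B_m(k)=b_{m,0}\cdots b_{m,k}$, note that $a_{n,m}/a_{n+1,m}=\tfrac{m}{m+1}$ for $m\ge n+1$ and $=1$ otherwise, so that for $R>n$ one has $A_n(R)/A_{n+1}(R)=\tfrac{n+1}{R+1}$ and hence the $S=0$ term of your sum is
\begin{equation*}
\frac{A_n(R)\,(R+1)}{B_n(0)\,A_{n+1}(R+1)}
=\frac{R+1}{a_{n,0}}\cdot\frac{n+1}{(R+1)\,a_{n+1,R+1}}
=\frac{n+1}{a_{n,0}\,a_{n+1,R+1}},
\end{equation*}
which tends to infinity with $R$ because $a_{n+1,m}\to 0$. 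The same two-factor deficit undermines the terms $S\ge 2$: in the ratio $\frac{A_n(R)}{A_n(S)A_n(R-S+1)}$ the denominator has two more factors than the numerator, so monotonicity of $\mathbf{a}_n$ yields at best a bound of the form $\frac{1}{a_{n,0}a_{n,1}}$ (take $a_j\approx\epsilon$ nearly constant to see this is sharp), not a constant independent of $\mathbf{a}$. The root cause is the step where you bound \emph{both} derivative factors by their full seminorm weights before doing any bookkeeping; that move discards the multiplicative structure of $A_n(R)$ and creates the deficit.

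The paper's proof is organized precisely to avoid this. It never charges the $X^i$-factor the full weight $B_n(S)$: it splits $A_n(R)$ multiplicatively, giving the $f$-derivative the prefix $a_{n,0}\cdots a_{n,R-S+1}$ (converted to the $\mathbf{a}_{n+1}$-weight by the first inequality of Lemma \ref{lem:4}, at the cost of the factor $n+1$) and the $X^i$-derivative only the suffix $a_{n,R-S+2}\cdots a_{n,R}$; decreasingness dominates that suffix by the prefix $A_n(S-2)$ of the \emph{same} length $S-1$, and the factorial identity is then invoked in the shifted form $\frac{A_n(S-2)}{(S-2)!}=\frac{B_n(S)}{S!}$, which is what produces the summable factor $\frac{1}{S(S-1)}$ against $p^{\omega}_{K,\mathbf{b}_n}(X^i)$. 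You instead used the identity exactly as stated in Lemma \ref{lem:4}, i.e. $B_n(S)=S(S-1)A_n(S)$; that is understandable, but the lemma's statement and its use inside the paper's proof are inconsistent, and only the shifted form makes the weights match, since then $A_n(R)\le A_n(S-2)\,A_n(R-S+1)$ holds by monotonicity with no deficit. So the repair is to prove (or build into the definition of $\mathbf{b}_n$) the shifted identity and restate your combinatorial inequality with $A_n(S-2)$ in place of $A_n(S)$. Be aware, finally, that the cases $S\in\{0,1\}$ remain genuinely problematic even on the paper's route — the paper's bound $\sum_{l}\frac{1}{(|r|-l)(|r|-l-1)}\le 4$ is meaningless for $l\in\{|r|-1,|r|\}$ and those terms are silently skipped — so your instinct that they need separate attention was right; but in your formulation they do not merely need care, they diverge.
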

\begin{proof}
Let $(U,\phi=(x^1,x^2,\ldots,x^N))$ be a coordinate chart on $M$. We first prove that, for
every $f,g\in C^{\omega}(M)$, every multi-index $(r)$ and every $x\in U$, we have
\begin{multline*}
\left\|D^{(r)}(fg)(x)\right\|\le\\ \sum_{j=0}^{|r|} \binom{|r|}{j}\sup\left\{\left\|(D^{(l)}f(x))\right\|\mid |l|=j\right\}\sup\left\{\left\|(D^{(l)}g(x))\right\|\mid |l|=|r|-j\right\}.
\end{multline*}
We prove this by induction on $|r|$. If $|r|=1$, then it is clear
that, for every $x\in U$, we have
\begin{equation*}
\left\|\frac{\partial}{\partial
    x^i}(fg)(x)\right\|=\left\|\frac{\partial f}{\partial
    x^i}(x)g(x)+\frac{\partial g}{\partial x^i}(x)f(x)\right\| \le \left\|\frac{\partial f}{\partial
    x^i}(x)g(x)\right\|+\left\|\frac{\partial g}{\partial x^i}(x)f(x)\right\|.
\end{equation*}
Now suppose that, for every $x\in U$ and for every $(r)$ such that $|r|\in \{1,2,\ldots,k\}$, we have 
\begin{multline*}
\left\|D^{(r)}(fg)(x)\right\|\le\\ \sum_{j=0}^{|r|}
\binom{|r|}{j}\sup\left\{\left\|(D^{(l)}f(x))\right\|\mid |l|=j\right\}\sup\left\{\left\|(D^{(l)}g(x))\right\|\mid |l|=|r|-j\right\}.
\end{multline*}
Let $(l)$ be a multi-index with $|l|=k+1$. Then there exists
$i\in\{1,2,\ldots,N\}$ and $(r)$ with $|r|=k$ such that
$(l)=(r)+(\widehat{i})$. So, for every $x\in U$, we have
\begin{multline*}
\left\|D^{(l)}(fg)(x)\right\|=\left\|D^{(r)}\left(\frac{\partial}{\partial
    x^i}(fg) \right) (x)\right\|\\\le \left\|D^{(r)}\left(\frac{\partial f}{\partial
    x^i} g \right) (x)\right\|+\left\|D^{(r)}\left(\frac{\partial g}{\partial
    x^i} f \right)(x)\right\|\\\le \sum_{j=0}^{|r|}
\binom{|r|}{j}\sup\left\{\left\|(D^{(l)}\frac{\partial f}{\partial
    x^i}(x))\right\|\mid |l|=j\right\}\sup\left\{\left\|(D^{(l)}g(x))\right\|\mid
|l|=|r|-j\right\}+\\\binom{|r|}{j}\sup\left\{\left\|(D^{(l)}f(x))\right\|\mid
|l|=j\right\}\sup\left\{\left\|(D^{(l)}\frac{\partial g}{\partial
    x^i}(x))\right\|\mid |l|=|r|-j\right\}\\=\sum_{j=0}^{|r|} \left(\binom{|r|}{j-1}+\binom{|r|}{j}\right) \sup\left\{\left\|(D^{(l)}f(x))\right\|\mid
|l|=j\right\}\times\\\sup\left\{\left\|(D^{(l)}g(x))\right\|\mid
|l|=|r|-j+1\right\}\\=\sum_{j=0}^{|r|}
\binom{|r|+1}{j}\sup\left\{\left\|(D^{(l)}f(x))\right\|\mid |l|=j\right\}\sup\left\{\left\|(D^{(l)}g(x))\right\|\mid |l|=|r|-j+1\right\}.
\end{multline*}
This completes the induction. Note that in the coordinate
neighbourhood $U$, we have
\begin{equation*}
X(f)=\sum_{i=1}^{N}X(x^i)\frac{\partial f}{\partial x^i}.
\end{equation*}
Thus, for every $x\in U$, we get
\begin{multline}\label{eq:4}
\left\|D^{(r)}(X(f))(x)\right\|\le\\ \sum_{j=0}^{|r|}\sum_{i=1}^{N}
\binom{|r|}{j}\sup\left\{\left\|(D^{(l)}X^i(x))\right\|\mid |l|=|r|-j\right\}\sup\left\{\left\|D^{(l)}\frac{\partial
    f}{\partial x^i}(x)\right\|\mid |l|=j\right\}.
\end{multline}
Now let $d>0$ and $\mathbf{a}\in \mathbf{c}^{\downarrow}_{0}(\Z_{\ge 0},
\R_{>0},d)$. Multiplying both sides of equation \eqref{eq:4} by
$\frac{(a_{n,0})(a_{n,1})\ldots (a_{n,|r|})}{|r|!}$, we get
\begin{multline*}
\frac{(a_{n,0})(a_{n,1})\ldots (a_{n,|r|})}{|r|!}\left\|D^{(r)}(X(f))(x)\right\|\le\\  
\sum_{i=1}^{N}\sum_{l=0}^{|r|} \left(\frac{(a_{n,0})(a_{n,1})\ldots
    (a_{n,l+1})}{l!}\sup\left\{\left\|D^{(s)}\frac{\partial f}{\partial
      x^i}(x)\right\|\mid |s|=l\right\}\right) \\\times\left(\frac{(a_{n,l+2})(a_{n,l+3})\ldots
    (a_{n,|r|})}{(|r|-l)!}\sup\left\{\left\|D^{(s)}X^i(x)\right\|\mid |s|=|r|-l\right\} \right),\qquad\forall x\in U.
\end{multline*}
Since the sequence $\mathbf{a}_n$ is decreasing, we have
\begin{multline*}
\frac{(a_{n,0})(a_{n,1})\ldots (a_{n,|r|})}{|r|!}\left\|D^{(r)}(X(f))(x)\right\|\\\le
\sum_{i=1}^{N}\sum_{l=0}^{|r|} \left(\frac{(a_{n,0})(a_{n,1})\ldots
    (a_{n,l+1})}{l!}\sup\left\{\left\|D^{(s)}\frac{\partial f}{\partial
      x^i}(x)\right\|\mid |s|=l\right\}\right)\\\times \left(\frac{(a_{n,0})(a_{n,1})\ldots
    (a_{n,|r|-l-2})}{(|r|-l)!}\sup\left\{\left\|D^{(s)}X^i(x)\right\|\mid |s|=|r|-l\right\} \right),\qquad\forall x\in U.
\end{multline*}
Using the above lemma, we have
\begin{eqnarray*}
\frac{(a_{n,0})(a_{n,1})\ldots (a_{n,l+1})}{(l)!}&\le&
                                                   (n+1)\frac{(a_{n+1,0})(a_{n+1,1}\ldots (a_{n+1,l+1})}{(l+1)!},\\
\frac{(a_{n,0})(a_{n,1})\ldots (a_{n,|r|-l-2})}{(|r|-l-2)!}&=&
                                                   \frac{(b_{n+1,0})(b_{n+1,1})\ldots (b_{n+1,|r|-l})}{(|r|-l)!}
\end{eqnarray*}
Therefore, we get
\begin{multline*}
\frac{(a_{n,0})(a_{n,1})\ldots (a_{n,|r|})}{|r|!}\left\|D^{(r)}(X(f))(x)\right\|\\\le
\sum_{i=1}^{N}\sum_{l=0}^{|r|} \frac{(n+1)}{(|r|-l)(|r|-l-1)}\left(\frac{(a_{n+1,0})(a_{n+1,1})\ldots
    (a_{n+1,l+1})}{(l+1)!}\sup\left\{\left\|D^{(s)}f(x)\right\|\mid |s|=l+1\right\}\right)\\ \left(\frac{(b_{n,0})(b_{n,1})\ldots
    (b_{n,|r|-l})}{(|r|-l)!}\sup\left\{\left\|D^{(s)}X^i(x)\right\|\mid |s|=|r|-l\right\} \right),\quad\forall x\in U.
\end{multline*}
Thus, by taking the supremum over $l\in \Z_{\ge 0}$ and $x\in K$ of the two term
in the right hand side of the above inequality, we get
\begin{multline*}
\frac{(a_{n,0})(a_{n,1})\ldots (a_{n,|r|})}{|r|!}\left\|D^{(r)}(X(f))(x)\right\|\\\le
N(n+1)p^{\omega}_{K,\mathbf{a}_{n+1}}(f)
p^{\omega}_{K,\mathbf{b}_n}(X^i)\sum_{l=0}^{|r|}
\frac{1}{(|r|-l)(|r|-l-1)}\\\le 4N(n+1)p^{\omega}_{K,\mathbf{a}_{n+1}}(f)
p^{\omega}_{K,\mathbf{b}_n}(X^i),\qquad\forall x\in U.
\end{multline*}
By taking the supremum of the left hand
side of the above inequality over $|r|\in \N$ and $x\in K$, for every $\mathbf{a}\in
\mathbf{c}^{\downarrow}_{0}(\Z_{\ge 0}; \R_{>0},d)$, we get
\begin{equation*}
p^{\omega}_{K,\mathbf{a}_n}(X(f))\le 4N(n+1)\max_{i}\{p^{\omega}_{K,\mathbf{b}_{n}}(X^i)\} p^{\omega}_{K,\mathbf{a}_{n+1}}(f).
\end{equation*}
\end{proof}

Using the $C^{\omega}$-topology on the space $\Gamma^{\omega}(TM)$,
one can study different properties of time-varying real analytic vector fields as curves on
$\Gamma^{\omega}(TM)$. In this part, we introduce the notions of
integrability and absolute continuity for curves on locally convex
spaces.

\begin{definition}
Let $V$ be a locally convex space with a family of generating
seminorms $\{p_i\}_{i\in \Lambda}$ and let $\mathbb{T}\subseteq \R$ be an
interval. 
A curve $f:\mathbb{T}\to V$ is \textbf{integrally bounded} if, for every
$i\in \N$, we have
\begin{equation*}
\int_{\mathbb{T}} p_i(f(\tau))d\tau<\infty.
\end{equation*}

A function $s:\mathbb{T}\to V$ is a \textbf{simple function} if there
exist $n\in \N$, measurable sets $A_1,A_2\ldots,A_n\subseteq
\mathbb{T}$, and $v_1,v_2,\ldots,v_n\in V$ such that
$\mathfrak{m}(A_i)<\infty$ for every $i\in\{1,2,\ldots,n\}$ and
\begin{equation*}
s=\sum_{i=1}^{n} \chi_{A_i}v_i.
\end{equation*} 

The set of all simple functions from the interval $\mathbb{T}$ to the
vector space $V$ is denoted by $S(\mathbb{T};V)$. 

One can define \textbf{Bochner integral} of a simple function
$s=\sum_{i=1}^{n} \chi_{A_i}v_i$ as
\begin{equation*}
\int_{\mathbb{T}} s(\tau)d\tau=\sum_{i=1}^{n} \mathfrak{m}(A_i) v_i.
\end{equation*}
It is easy to show that the above expression does not depend on choice
of $A_1,A_2,\ldots, A_n\subseteq \mathbb{T}$.

A curve $f:\mathbb{T}\to V$ is \textbf{Bochner
  approximable} if there exists a net $\{f_{\alpha}\}_{\alpha\in
  \Lambda}$ of simple functions on $V$ such that, for every seminorm
$p_i$, we have
\begin{equation*}
\lim_{\alpha} \int_{\mathbb{T}} p_i(f_{\alpha}(\tau)-f(\tau))d\tau=0.
\end{equation*}
The net of simple functions $\{f_{\alpha}\}_{\alpha\in\Lambda}$ is
an \textbf{approximating net} for the mapping $f$.
\end{definition}
\begin{theorem}[\cite{RB-AD:2011}]
Let $\{f_{\alpha}\}_{\alpha\in\Lambda}$ be an approximating net for
the mapping $f:\mathbb{T}\to V$. Then
$\{\int_{\mathbb{T}}f_{\alpha}(\tau)d\tau\}_{\alpha\in\Lambda}$ is a Cauchy net. 
\end{theorem}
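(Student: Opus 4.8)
The plan is to verify the defining property of a Cauchy net directly, working one generating seminorm at a time. Thus, for each $p_i$ and each $\epsilon>0$, I must exhibit an index $\alpha_0\in\Lambda$ such that $p_i\!\left(\int_{\mathbb{T}}f_\alpha\,d\tau-\int_{\mathbb{T}}f_\beta\,d\tau\right)<\epsilon$ whenever $\alpha,\beta\succeq\alpha_0$. Since each $f_\alpha$ is a simple function and the difference of two simple functions is again simple, I would first record that the Bochner integral is linear on $S(\mathbb{T};V)$, so that
\[
\int_{\mathbb{T}}f_\alpha\,d\tau-\int_{\mathbb{T}}f_\beta\,d\tau=\int_{\mathbb{T}}(f_\alpha-f_\beta)\,d\tau .
\]

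The crux is an integral triangle inequality for simple functions: for every $s\in S(\mathbb{T};V)$ and every seminorm $p_i$,
\[
p_i\!\left(\int_{\mathbb{T}}s(\tau)\,d\tau\right)\le\int_{\mathbb{T}}p_i(s(\tau))\,d\tau .
\]
To prove it I would rewrite $s=\sum_{j=1}^{n}\chi_{A_j}v_j$ in a representation with pairwise disjoint sets $A_j$, obtained by refining the given sets into their Boolean atoms. This changes neither the value of $\int_{\mathbb{T}}s\,d\tau$ (by the representation-independence noted just after the definition of the Bochner integral) nor the pointwise values $p_i(s(\tau))$. On disjoint sets one has $s(\tau)=v_j$ on $A_j$, so $\int_{\mathbb{T}}p_i(s(\tau))\,d\tau=\sum_{j}\mathfrak{m}(A_j)p_i(v_j)$, while subadditivity and positive homogeneity of $p_i$ give $p_i\!\left(\sum_{j}\mathfrak{m}(A_j)v_j\right)\le\sum_{j}\mathfrak{m}(A_j)p_i(v_j)$; combining these yields the claimed inequality.

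Granting this, I would apply the inequality to $s=f_\alpha-f_\beta$ and split pointwise through $f$, using the triangle inequality for $p_i$ and monotonicity of the scalar integral, to obtain
\[
p_i\!\left(\int_{\mathbb{T}}(f_\alpha-f_\beta)\,d\tau\right)\le\int_{\mathbb{T}}p_i(f_\alpha(\tau)-f(\tau))\,d\tau+\int_{\mathbb{T}}p_i(f(\tau)-f_\beta(\tau))\,d\tau .
\]
The approximating-net hypothesis states precisely that each term on the right tends to $0$; since $\Lambda$ is directed, I may choose $\alpha_0$ with $\int_{\mathbb{T}}p_i(f_\gamma(\tau)-f(\tau))\,d\tau<\tfrac{\epsilon}{2}$ for all $\gamma\succeq\alpha_0$, which closes the estimate and establishes the Cauchy property.

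The argument is almost entirely formal, and I expect the only genuine obstacle to be the integral triangle inequality above. Within it, the two points requiring care are the passage to a disjoint representation together with the well-definedness of the Bochner integral, and the observation that $p_i\scirc s$ is itself a scalar simple function (taking finitely many values on sets of finite measure), so that $\int_{\mathbb{T}}p_i(s(\tau))\,d\tau$ is the ordinary Lebesgue integral appearing in the definition of an approximating net.
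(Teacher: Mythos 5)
Your proof is correct. There is nothing in the paper to compare it against: the statement is quoted as a theorem of \cite{RB-AD:2011} with no proof supplied in the text, so your argument must stand on its own, and it does---it is the standard argument. The two points you isolate are indeed the only ones requiring care, and both are handled soundly. The inequality
\begin{equation*}
p_i\left(\int_{\mathbb{T}}s(\tau)\,d\tau\right)\le\int_{\mathbb{T}}p_i(s(\tau))\,d\tau,\qquad s\in S(\mathbb{T};V),
\end{equation*}
follows exactly as you describe: each nonempty Boolean atom of the refinement lies inside some $A_j$ and hence has finite measure, the atom on which $s=0$ contributes nothing, representation-independence of the Bochner integral guarantees the refined representation computes the same integral, and subadditivity together with positive homogeneity of the seminorm (with the nonnegative weights $\mathfrak{m}(A_j)$) finishes the estimate. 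In the final step, the scalar integrals $\int_{\mathbb{T}}p_i(f_\gamma(\tau)-f(\tau))\,d\tau$ are meaningful by the very formulation of the approximating-net hypothesis, while $p_i\scirc(f_\alpha-f_\beta)$ is a nonnegative scalar simple function, so pointwise subadditivity of $p_i$ and monotonicity of the scalar integral yield the Cauchy estimate with $\epsilon/2$ for each of the two terms; working one generating seminorm at a time is exactly the right notion of a Cauchy net in a locally convex space.
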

Let $f:\mathbb{T}\to V$ be a mapping and let
$\{f_{\alpha}\}_{\alpha\in\Lambda}$ be an approximating net of simple functions for $f$. If the net
$\{\int_{\mathbb{T}}f_{\alpha}(\tau)d\tau\}_{\alpha\in\Lambda}$
converges, then we say that $f$ is \textbf{Bochner integrable}. One
can show that the limit of
$\{\int_{\mathbb{T}}f_{\alpha}(\tau)d\tau\}_{\alpha\in\Lambda}$
doesn't depend on the choice of 
approximating net and is called \textbf{Bochner integral of $f$}. The set of all Bochner integrable curves from $\mathbb{T}$ to $V$ is
denoted by $\mathrm{L}^1(\mathbb{T};V)$. 

A curve $f:\mathbb{T}\to V$ is \textbf{locally Bochner integrable}
if for every compact set $J\subseteq \mathbb{T}$, the map $f\mid_J$ is
Bochner integrable. The set of all locally Bochner integrable curves from $\mathbb{T}$ to $V$ is
denoted by $\mathrm{L}^1_{\loc}(\mathbb{T}; V)$.

\begin{theorem}\label{th:1000}
Let $V$ be a complete, separable locally convex space, 
$\mathbb{T}\subseteq \R$ be an interval, and $f:\mathbb{T}\to V$
be a curve on $V$. Then $f$ is locally integrally bounded if and only if it is locally
Bochner integrable.
\end{theorem}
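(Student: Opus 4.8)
The plan is to reduce the statement to a single compact subinterval and there to identify integral boundedness with the existence of an approximating net of simple functions, using completeness to supply convergence of the integrals for free. Since both properties are defined by restriction to compact $J\subseteq\mathbb{T}$, I first reduce to the case where $\mathbb{T}=J$ is compact. On a compact interval completeness does half the work: if $f$ is Bochner approximable, say with approximating net $\{s_\alpha\}$ of simple functions, then by the preceding theorem of \cite{RB-AD:2011} the net $\{\int_J s_\alpha\,d\mathfrak{m}\}$ is Cauchy, and since $V$ is complete this net converges, so $f$ is Bochner integrable. Thus on a compact interval \emph{Bochner approximable} and \emph{Bochner integrable} coincide, and it remains to show that, for $f\colon J\to V$, integral boundedness is equivalent to Bochner approximability.

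For the implication \emph{approximable $\Rightarrow$ integrally bounded}, fix a generating seminorm $p_i$. From the net with $\int_J p_i\scirc(f-s_\alpha)\,d\mathfrak{m}\to 0$ I extract, using directedness, a sequence $\{s_{\alpha_k}\}$ with $\int_J p_i\scirc(f-s_{\alpha_k})\,d\mathfrak{m}\to 0$; since each $p_i\scirc(f-s_{\alpha_k})$ is measurable and tends to $0$ in $\mathrm{L}^1(J)$, a subsequence tends to $0$ almost everywhere, and the bound $\abs{p_i(s_{\alpha_k}(\tau))-p_i(f(\tau))}\le p_i(f(\tau)-s_{\alpha_k}(\tau))$ exhibits $p_i\scirc f$ as an almost-everywhere limit of the measurable functions $p_i\scirc s_{\alpha_k}$, so $p_i\scirc f$ is measurable. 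The triangle inequality $p_i(f(\tau))\le p_i(s_\alpha(\tau))+p_i(f(\tau)-s_\alpha(\tau))$, integrated at some $\alpha$ with $\int_J p_i\scirc(f-s_\alpha)\,d\mathfrak{m}<\infty$, then gives $\int_J p_i\scirc f\,d\mathfrak{m}<\infty$, because every simple function is integrally bounded.

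The substantive direction is \emph{integrally bounded $\Rightarrow$ approximable}. Here I would use separability to fix a countable dense set $\{v_k\}_{k\in\N}\subseteq V$ with $v_0=0$, and construct simple functions by a Pettis-type nearest-point selection. For a finite set $G$ of generating seminorms and $n,m\in\N$, assign to $\tau$ the least index $k\le m$ with $\max_{p\in G}p(f(\tau)-v_k)<1/n$ (and the value $v_0=0$ if no such $k$ exists), obtaining a simple function $s_{G,n,m}$, and write $B_{G,n,m}$ for the exceptional set where no admissible $k\le m$ is found. Since $\{v_k\}$ is dense, for fixed $G,n$ the sets $B_{G,n,m}$ decrease to the empty set as $m\to\infty$, so using absolute continuity of the Lebesgue integrals of the finitely many $p\scirc f$ with $p\in G$ I may choose $m=m(G,n)$ with $\int_{B_{G,n,m}}p\scirc f\,d\mathfrak{m}<1/n$ for every $p\in G$. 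Directing the pairs $(G,n)$ by inclusion of $G$ and magnitude of $n$ yields a net, and for a fixed seminorm $p_i$ the estimate
\begin{equation*}
\int_J p_i\scirc(f-s_{G,n,m(G,n)})\,d\mathfrak{m}\le \frac{\mathfrak{m}(J)}{n}+\int_{B_{G,n,m(G,n)}}p_i\scirc f\,d\mathfrak{m}\le \frac{\mathfrak{m}(J)+1}{n},
\end{equation*}
valid once $p_i\in G$, shows that the integrals tend to $0$; this is exactly Bochner approximability.

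The hard part will be the measurability bookkeeping in this last construction. Integral boundedness only guarantees that $p_i\scirc f$ is measurable for each generating seminorm, whereas the selection above requires the maps $\tau\mapsto p(f(\tau)-v_k)$ to be measurable, that is, it requires $f$ to be strongly measurable. This is precisely where separability must be used: one must argue that $f$ is essentially separably valued and invoke a Pettis-type measurability theorem to upgrade the available measurability to strong measurability, after which the dense-set selection and the absolute continuity of $\int_J p_i\scirc f\,d\mathfrak{m}$ deliver the approximating net and, via completeness, the Bochner integral.
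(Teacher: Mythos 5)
The paper states this theorem without proof --- it is imported from the framework of \cite{RB-AD:2011} --- so your attempt has to be judged on its own terms. Its architecture is the standard one, and most of it is sound: the reduction to compact subintervals, the use of completeness together with the preceding Cauchy-net theorem to identify Bochner approximability with Bochner integrability, the direction \emph{approximable $\Rightarrow$ integrally bounded} (sequence extraction, almost-everywhere convergence of a subsequence to get measurability of $p_i\scirc f$, then the triangle inequality), and the first-hit selection from a countable dense set combined with absolute continuity of the integrals $\int_J p\scirc f\,d\mathfrak{m}$ are all correct as written.

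The genuine gap is the step you yourself flag at the end, and it is not bookkeeping: it cannot be closed from the stated hypotheses. A Pettis-type theorem upgrades \emph{weak} measurability (measurability of $\ell\scirc f$ for continuous linear functionals $\ell$) plus essential separable-valuedness to strong measurability; integral boundedness gives you at best measurability of $p_i\scirc f$ for the generating seminorms, and there is no passage from the latter to the former. In fact, with the paper's literal definition of ``integrally bounded'' the implication is false: fix $v_0\ne 0$ and a non-measurable set $A\subseteq\mathbb{T}$, and set $f=\chi_{A}v_0-\chi_{\mathbb{T}\setminus A}v_0$. Every seminorm satisfies $p(-v)=p(v)$, so $p_i(f(\tau))=p_i(v_0)$ for all $\tau$, and $f$ is locally integrally bounded with respect to any generating family; yet $f$ is not Bochner approximable, because an approximating net would force $\ell\scirc f$ to be an almost-everywhere limit of a sequence of simple functions (any continuous $\ell$ being dominated by a finite combination of the $p_i$), hence measurable, while Hahn--Banach provides $\ell$ with $\ell(v_0)\ne 0$, for which $\ell\scirc f=\ell(v_0)\bigl(\chi_A-\chi_{\mathbb{T}\setminus A}\bigr)$ is not measurable. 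The repair is to add what the source reference assumes: measurability of $f$ itself as a hypothesis (equivalently, to build it into the definition of integral boundedness). Once $f$ is Borel measurable, your selection sets are measurable simply because $v\mapsto\max_{p\in G}p(v-v_k)$ is continuous, and your construction then completes the proof with no appeal to Pettis at all.
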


Using the $C^{\nu}$-topology on the space $\Gamma^{\nu}(TM)$,  one can
apply the Theorem \ref{th:60} and Theorem \ref{th:1000} to get the following
result. 

\begin{theorem}\label{th:3}
Let $X:\mathbb{T}\to \Gamma^{\nu}(TM)$
be a time-varying $C^{\nu}$-vector fields. Then $X$ is locally integrally bounded if and only if it is locally
Bochner integrable.
\end{theorem}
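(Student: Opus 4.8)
The plan is to recognize that this statement is an immediate specialization of Theorem~\ref{th:1000} to the locally convex space $\Gamma^{\nu}(TM)$. Theorem~\ref{th:1000} asserts the equivalence of local integral boundedness and local Bochner integrability for any curve valued in a complete, separable locally convex space, so the entire task reduces to verifying that $\Gamma^{\nu}(TM)$, equipped with the $C^{\nu}$-topology, satisfies these two hypotheses.

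First I would dispatch the two regularity classes separately. For the holomorphic case $\nu=\hol$, the structural theorem stated earlier guarantees that $\Gamma^{\hol}(TM)$ with the $C^{\hol}$-topology is Hausdorff, separable, complete, metrizable, and nuclear; in particular it is complete and separable. For the real analytic case $\nu=\omega$, Theorem~\ref{th:60} asserts that $\Gamma^{\omega}(TM)$ with the $C^{\omega}$-topology is Hausdorff, separable, complete, and nuclear; again the completeness and separability we require are among these properties. Thus in either regularity class the ambient space $V=\Gamma^{\nu}(TM)$ meets both hypotheses of Theorem~\ref{th:1000}.

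With the hypotheses verified, I would simply apply Theorem~\ref{th:1000} with $V=\Gamma^{\nu}(TM)$ and the curve in question as $f$, which yields at once that $X$ is locally integrally bounded if and only if it is locally Bochner integrable.

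There is no genuine obstacle in this argument: the substantive work has already been carried out in establishing the topological properties of $\Gamma^{\nu}(TM)$ (Theorem~\ref{th:60} together with its holomorphic counterpart) and in proving the general equivalence Theorem~\ref{th:1000}. The only point that demands a moment of care is confirming that the seminorms implicit in the definition of local integral boundedness are precisely those generating the $C^{\nu}$-topology; but since the equivalence asserted in Theorem~\ref{th:1000} depends only on the underlying complete separable locally convex topology and not on a particular choice of generating seminorms, this causes no difficulty. I therefore expect the author's proof to consist of little more than a direct invocation of these two prior results.
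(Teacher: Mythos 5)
Your proposal is correct and matches the paper's own argument exactly: the paper justifies this theorem in one line by invoking Theorem~\ref{th:60} (together with its holomorphic counterpart) to supply completeness and separability of $\Gamma^{\nu}(TM)$, and then applying Theorem~\ref{th:1000}. Your additional remark that the equivalence is independent of the choice of generating seminorms is a harmless refinement of the same route.
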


We denoted the space of Bochner integrable curves from a compact interval
$\mathbb{T}\subseteq\R$ to a locally
convex vector space $V$ by $\mathrm{L}^1(\mathbb{T};V)$. One can show
that $\mathrm{L}^1(\mathbb{T};V)$ is a vector space. Let
$\{p_i\}_{i\in \Lambda}$ be a family of generating seminorms for
$V$. Then, for every $i\in \Lambda$, one can define a seminorm
$p_{i,\mathbb{T}}$ on $\mathrm{L}^1(\mathbb{T};V)$ by
\begin{equation*}
p_{i,\mathbb{T}}(f)=\int_{\mathbb{T}} p_i\left(f(\tau)\right)d\tau.
\end{equation*}
Therefore, one can consider $\mathrm{L}^1(\mathbb{T};V)$ as a locally
convex space with the generating family of seminorms
$\{p_{i,\mathbb{T}}\}_{i\in \Lambda}$.

It would be
interesting to investigate whether this locally convex space can be characterized
using the locally convex space
space $V$ and the Banach space $\mathrm{L}^1(\mathbb{T})$. 

\begin{theorem}[\cite{HJ:1981}]\label{th:12}
Let $\mathbb{T}\subseteq\R$ and $V$ be a complete locally convex space. Then
there exists a linear homeomorphism between
$\mathrm{L}^1(\mathbb{T};V)$ and $\mathrm{L}^1(\mathbb{T})\widehat{\otimes}_{\pi} V$.
\end{theorem}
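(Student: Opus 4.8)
The plan is to exhibit the linear homeomorphism explicitly, starting from the canonical bilinear map $B\colon \mathrm{L}^1(\mathbb{T})\times V\to \mathrm{L}^1(\mathbb{T};V)$ sending $(g,v)$ to the curve $\tau\mapsto g(\tau)v$. First I would check that $g\cdot v$ really lies in $\mathrm{L}^1(\mathbb{T};V)$: approximating $g$ in $\mathrm{L}^1(\mathbb{T})$ by scalar simple functions $g_n=\sum_j c_j^{(n)}\chi_{A_j^{(n)}}$ produces the $V$-valued simple functions $g_n\cdot v=\sum_j c_j^{(n)}v\,\chi_{A_j^{(n)}}$, and since $\int_{\mathbb{T}} p_i(g_n\cdot v-g\cdot v)\,d\mathfrak{m}=p_i(v)\,\|g_n-g\|_{\mathrm{L}^1(\mathbb{T})}\to 0$ for every generating seminorm $p_i$ of $V$, the curve $g\cdot v$ is Bochner approximable; completeness of $V$ then gives Bochner integrability. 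The map $B$ is bilinear and satisfies $p_{i,\mathbb{T}}(B(g,v))=\|g\|_{\mathrm{L}^1(\mathbb{T})}\,p_i(v)$, which is precisely the value of the projective tensor seminorm on the elementary tensor $g\otimes v$. By the universal property of the projective tensor product, $B$ factors through a linear map $\Phi\colon \mathrm{L}^1(\mathbb{T})\otimes_{\pi}V\to\mathrm{L}^1(\mathbb{T};V)$, and the triangle inequality gives the easy bound $p_{i,\mathbb{T}}(\Phi(u))\le\pi_i(u)$, where $\pi_i$ denotes the projective seminorm built from $\|\cdot\|_{\mathrm{L}^1(\mathbb{T})}$ and $p_i$; hence $\Phi$ is continuous.

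The heart of the argument is the reverse estimate, which I would establish on the dense subspace spanned by the ``characteristic tensors'' $\chi_A\otimes v$. Given a simple function $s\in S(\mathbb{T};V)$, rewrite it as $s=\sum_{k=1}^m \chi_{A_k}v_k$ with the $A_k$ pairwise disjoint. Then pointwise $p_i(s(\tau))=\sum_{k=1}^m \chi_{A_k}(\tau)\,p_i(v_k)$, so $p_{i,\mathbb{T}}(s)=\sum_{k=1}^m \mathfrak{m}(A_k)\,p_i(v_k)$, while the disjoint representation $u=\sum_{k=1}^m \chi_{A_k}\otimes v_k$ gives $\pi_i(u)\le\sum_{k=1}^m \mathfrak{m}(A_k)\,p_i(v_k)$. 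Chaining with the easy bound yields $\pi_i(u)\le p_{i,\mathbb{T}}(s)=p_{i,\mathbb{T}}(\Phi(u))\le\pi_i(u)$, so all three coincide. Thus $\Phi$ is a seminorm isometry (for every $i$) from the span of characteristic tensors onto $S(\mathbb{T};V)$, and in particular a topological isomorphism onto its image.

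It then remains to pass to the completions. Since any $g\in\mathrm{L}^1(\mathbb{T})$ is an $\mathrm{L}^1$-limit of scalar simple functions, the characteristic tensors are dense in $\mathrm{L}^1(\mathbb{T})\otimes_{\pi}V$ and hence in $\mathrm{L}^1(\mathbb{T})\widehat{\otimes}_{\pi}V$. On the other side, Bochner approximability means exactly that each $f\in\mathrm{L}^1(\mathbb{T};V)$ is a limit of simple functions in the seminorms $p_{i,\mathbb{T}}$, so $S(\mathbb{T};V)$ is dense in $\mathrm{L}^1(\mathbb{T};V)$ and, using the completeness of $V$ together with the Cauchy-net theorem quoted above, $\mathrm{L}^1(\mathbb{T};V)$ coincides with the closure of $S(\mathbb{T};V)$. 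The seminorm isometry $\Phi$ therefore extends uniquely to a linear homeomorphism $\mathrm{L}^1(\mathbb{T})\widehat{\otimes}_{\pi}V\to\mathrm{L}^1(\mathbb{T};V)$. The main obstacles I anticipate are two: verifying the reverse seminorm inequality in the clean form above, where the disjoint-support reduction is essential and must be carried out so that the chosen tensor representation is genuinely admissible in the projective infimum; and establishing that $\mathrm{L}^1(\mathbb{T};V)$ is complete for complete $V$, so that it really is the completion of $S(\mathbb{T};V)$ and the extension of $\Phi$ lands in it homeomorphically.
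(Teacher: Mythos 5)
Your architecture is the standard Grothendieck-style argument, and the parts you actually execute are correct. (For comparison purposes: the paper offers no proof of Theorem~\ref{th:12} at all --- it is quoted from Jarchow --- so your proposal is being measured against the textbook argument, not against anything in the paper.) The canonical bilinear map $B(g,v)=g\cdot v$ does land in $\mathrm{L}^1(\mathbb{T};V)$, the cross-seminorm identity $p_{i,\mathbb{T}}(B(g,v))=\|g\|_{\mathrm{L}^1(\mathbb{T})}\,p_i(v)$ holds, the induced map $\Phi$ satisfies $p_{i,\mathbb{T}}(\Phi(u))\le\pi_i(u)$, and your disjointification argument correctly upgrades this to the equality $p_{i,\mathbb{T}}(\Phi(u))=\pi_i(u)$ on the span of characteristic tensors, whose image $S(\mathbb{T};V)$ is dense on the function side while the span itself is dense on the tensor side.

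The genuine gap is the step you defer to the very end and never close. From a seminorm-preserving bijection between dense subspaces, the continuous extension $\widehat{\Phi}$ is a linear homeomorphism from $\mathrm{L}^1(\mathbb{T})\widehat{\otimes}_{\pi}V$ onto the \emph{completion} of $S(\mathbb{T};V)$ (more precisely, of the Hausdorff quotient by curves vanishing almost everywhere --- note the $p_{i,\mathbb{T}}$ are only seminorms, so the theorem must anyway be read modulo almost-everywhere equality). The theorem asserts that this completion \emph{is} $\mathrm{L}^1(\mathbb{T};V)$, i.e.\ that every Cauchy net of simple functions in the seminorms $p_{i,\mathbb{T}}$ converges in mean to an honest Bochner-approximable curve; equivalently, that $\mathrm{L}^1(\mathbb{T};V)$ is complete. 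You flag this as an ``anticipated obstacle'' but supply no argument, and in the stated generality it is precisely the non-routine content of the theorem. For Banach or Fr\'echet $V$ it follows from the Riesz--Fischer mechanism: extract a rapidly Cauchy \emph{sequence}, obtain almost-everywhere convergence, and define the limit curve pointwise. For a non-metrizable complete $V$ that mechanism is unavailable --- Cauchy nets admit no almost-everywhere-convergent subnet trick --- and the natural substitute, writing $V=\varprojlim_i V_i$ over its local Banach spaces, obtaining limits $g_i\in\mathrm{L}^1(\mathbb{T};V_i)$ of the pushed-forward net, and gluing them into one $V$-valued curve, stalls because each compatibility $\pi_{ij}\scirc g_i=g_j$ holds only off a null set depending on the pair $(i,j)$, and an uncountable union of null sets need not be null. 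Surjectivity of $\widehat{\Phi}$ onto the classes of Bochner-integrable curves and injectivity are both fine by your isometry; what is missing is that the image of $\widehat{\Phi}$ is \emph{contained in} the function space rather than in its abstract completion. Until that is proved, what you have established is the weaker statement that $\mathrm{L}^1(\mathbb{T})\widehat{\otimes}_{\pi}V$ is the completion of $\mathrm{L}^1(\mathbb{T};V)$ --- and the missing step is exactly the work the paper's citation to Jarchow is doing.
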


One can find the similar characterizations for the space of continuous
mappings from $\mathbb{T}$ to the locally convex space $V$.

\begin{theorem}[\cite{HJ:1981}]\label{th:14}
Let $\mathbb{T}\subseteq\R$ be a compact interval and $V$ be a complete locally convex space. Then
there exists a linear homeomorphism between
$\mathrm{C}^0(\mathbb{T};V)$ and $\mathrm{C}^0(\mathbb{T})\widehat{\otimes}_{\epsilon} V$.
\end{theorem}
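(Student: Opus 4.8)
The plan is to realize the completed injective tensor product as the completion of the canonical image of the algebraic tensor product inside $\mathrm{C}^0(\mathbb{T};V)$. First I would define the bilinear map $\mathrm{C}^0(\mathbb{T})\times V\to \mathrm{C}^0(\mathbb{T};V)$ by $(f,v)\mapsto (t\mapsto f(t)v)$; being bilinear, it factors through a linear map $\Phi\colon \mathrm{C}^0(\mathbb{T})\otimes V\to \mathrm{C}^0(\mathbb{T};V)$ with $\Phi(\sum_i f_i\otimes v_i)(t)=\sum_i f_i(t)v_i$. I equip $\mathrm{C}^0(\mathbb{T};V)$ with the topology of uniform convergence, whose generating seminorms are $\norm{g}_q=\sup_{t\in\mathbb{T}} q(g(t))$ for $q$ a continuous seminorm on $V$. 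The strategy is to show that $\Phi$ is a topological embedding of $\mathrm{C}^0(\mathbb{T})\otimes_{\epsilon}V$ with dense range, and then pass to completions.

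The heart of the argument is the verification that $\Phi$ matches seminorms exactly. The injective tensor seminorms on $\mathrm{C}^0(\mathbb{T})\otimes V$ are
\[
\epsilon_q(u)=\sup\left\{\,\abs{(\mu\otimes\ell)(u)}\;:\;\mu\in \mathrm{C}^0(\mathbb{T})',\ \norm{\mu}\le 1,\ \ell\in U_q^{\circ}\,\right\},
\]
where $U_q^{\circ}$ is the polar of the $q$-unit ball in $V$. Writing $u=\sum_i f_i\otimes v_i$ and $h_{\ell}(t)=\ell\big(\Phi(u)(t)\big)=\sum_i f_i(t)\ell(v_i)$, one computes $(\mu\otimes\ell)(u)=\int_{\mathbb{T}} h_{\ell}\,d\mu$. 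Using the Riesz representation identification of $\mathrm{C}^0(\mathbb{T})'$ with the Radon measures on $\mathbb{T}$, one has $\sup_{\norm{\mu}\le 1}\abs{\int_{\mathbb{T}} h_{\ell}\,d\mu}=\sup_{t}\abs{h_{\ell}(t)}$, while the Hahn\textendash Banach description of seminorms gives $q(w)=\sup_{\ell\in U_q^{\circ}}\abs{\ell(w)}$. Combining these and exchanging the two suprema yields
\[
\epsilon_q(u)=\sup_{t}\,\sup_{\ell\in U_q^{\circ}}\abs{\ell(\Phi(u)(t))}=\sup_{t} q\big(\Phi(u)(t)\big)=\norm{\Phi(u)}_q.
\]
Thus $\Phi$ is a seminorm-preserving, and hence (since $\mathrm{C}^0(\mathbb{T};V)$ is Hausdorff) injective, topological embedding.

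It remains to show the range of $\Phi$ is dense and that the target is complete. For density, I would fix $g\in\mathrm{C}^0(\mathbb{T};V)$, a continuous seminorm $q$, and $\varepsilon>0$. Compactness of $\mathbb{T}$ makes $g$ uniformly $q$-continuous, so I can cover $\mathbb{T}$ by finitely many open sets on each of which $q(g(t)-g(t_i))<\varepsilon$, choose a subordinate partition of unity $\{\phi_i\}\subseteq \mathrm{C}^0(\mathbb{T})$, and set $u=\sum_i \phi_i\otimes g(t_i)$. Since $\sum_i\phi_i=1$ and $\phi_i\ge 0$, the estimate $q\big(g(t)-\sum_i\phi_i(t)g(t_i)\big)\le \sum_i\phi_i(t)\,q(g(t)-g(t_i))<\varepsilon$ gives $\norm{g-\Phi(u)}_q<\varepsilon$, proving density. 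Completeness of $\mathrm{C}^0(\mathbb{T};V)$ follows from completeness of $V$: a Cauchy net in the uniform topology is pointwise Cauchy, its pointwise limit exists in $V$, and the uniform Cauchy condition forces this limit to be continuous and to be the uniform limit. Therefore $\mathrm{C}^0(\mathbb{T};V)$ is the completion of $\Phi\big(\mathrm{C}^0(\mathbb{T})\otimes V\big)$, which by the embedding is exactly $\mathrm{C}^0(\mathbb{T})\widehat{\otimes}_{\epsilon}V$, and $\Phi$ extends to the desired linear homeomorphism.

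I expect the seminorm-matching identity to be the main obstacle: it is where the choice of the injective (rather than projective) tensor topology is forced, and it hinges on correctly invoking both the Riesz representation of $\mathrm{C}^0(\mathbb{T})'$ and the Hahn\textendash Banach polar description of $q$, and then justifying the interchange of suprema. The density and completeness steps are comparatively routine.
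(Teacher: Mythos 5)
The paper offers no proof of this statement at all: it is quoted as a known result, with the bracketed citation to Jarchow's book standing in for the argument. So there is no internal proof to compare against; what you have done is supply the missing proof, and your argument is correct — it is essentially the standard textbook one (the proof one finds in the cited reference and in Tr\`eves), namely: embed $\mathrm{C}^0(\mathbb{T})\otimes V$ into $\mathrm{C}^0(\mathbb{T};V)$ via $\Phi$, check that the injective seminorms pull back exactly to the uniform seminorms, get density from partitions of unity using uniform continuity on the compact interval, check completeness of $\mathrm{C}^0(\mathbb{T};V)$ from completeness of $V$, and invoke uniqueness of the completion. Two small remarks. First, the Riesz representation theorem is more than you need in the seminorm identity: the inequality $\sup_{\norm{\mu}\le 1}\abs{\mu(h_\ell)}\le \sup_t\abs{h_\ell(t)}$ is just the definition of the dual norm, and the reverse inequality follows from the point evaluations $\delta_t$, which lie in the unit ball of $\mathrm{C}^0(\mathbb{T})'$; no measure-theoretic identification is required. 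Second, a minor logical slip: injectivity of $\Phi$ does not follow from seminorm preservation together with the \emph{target} being Hausdorff — that combination would need the $\epsilon$-seminorms to separate points of the \emph{source} $\mathrm{C}^0(\mathbb{T})\otimes_{\epsilon}V$. This is true (the injective tensor product of Hausdorff spaces is Hausdorff), but it is cleaner to check injectivity directly: write $u=\sum_i f_i\otimes v_i$ with the $v_i$ linearly independent; then $\Phi(u)=0$ forces $\sum_i f_i(t)v_i=0$ for every $t$, hence $f_i(t)=0$ for all $i$ and $t$, hence $u=0$. With that repaired, your proof is complete and matches the content of the result the paper imports.
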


It is possible to define different notions
of absolute continuity for a curve on a locally convex space $V$. In
this paper, we choose to use the following notion which turns out to be the
most applicable one in our study of flows of time-varying vector fields.

\begin{definition}\label{def:2}
A curve $f:\mathbb{T}\to V$ is \textbf{absolutely continuous} if there
exists a Bochner integrable curve $g:\mathbb{T}\to V$ such that, for every
$t_0\in \mathbb{T}$, we have
\begin{equation*}
f(t)=f(t_0)+\int_{t_0}^{t}g(\tau)d\tau,\qquad\forall t\in\mathbb{T}.
\end{equation*}
The set of all absolutely continuous curves on $V$ on the
interval $\mathbb{T}$ is denoted by $\mathrm{AC}(\mathbb{T};V)$.
\end{definition}

\begin{theorem}
Let $\xi:\mathbb{T}\to \LC{M}{N}{\nu}$ be a locally absolutely continuous
curve on $\LC{M}{N}{\nu}$. Then $\xi$ is 
differentiable for almost every $t\in\mathbb{T}$.
\end{theorem}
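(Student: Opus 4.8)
The plan is to reduce the statement to the classical Lebesgue differentiation theorem, applied one seminorm at a time, and then to assemble the single-seminorm conclusions into genuine differentiability in $V:=\LC{M}{N}{\nu}$. By the structure theorem for $\LC{M}{N}{\nu}$, the space $V$ is Hausdorff, separable, complete, and nuclear. Since $\mathbb{T}$ is a countable increasing union of compact subintervals and a countable union of null sets is null, it suffices to prove differentiability almost everywhere on a fixed compact $J=[a,b]\subseteq\mathbb{T}$. On $J$, Definition \ref{def:2} furnishes a Bochner integrable curve $g:J\to V$ with $\xi(t)=\xi(a)+\int_a^t g(\tau)\,d\tau$ for all $t\in J$. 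Thus it is enough to show that the indefinite Bochner integral $F(t)=\int_a^t g(\tau)\,d\tau$ is differentiable for almost every $t$, with $F'(t)=g(t)$; then $\xi'=F'$ almost everywhere.

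First I would fix a single continuous seminorm $p$ on $V$ and establish an almost-everywhere Lebesgue-point property for it. Using separability, choose a countable dense set $D\subseteq V$. For each $v\in D$ the scalar function $\tau\mapsto p(g(\tau)-v)$ lies in $\mathrm{L}^1(J)$, because $p(g(\tau)-v)\le p(g(\tau))+p(v)$ and $\tau\mapsto p(g(\tau))$ is integrable by integral boundedness of $g$. The classical scalar Lebesgue differentiation theorem then produces a conull set on which every $v\in D$ is simultaneously a Lebesgue point of $p(g(\cdot)-v)$ (a countable intersection). On this set, approximating $g(t)$ in the $p$-seminorm by elements of $D$ and applying the triangle inequality in the usual way yields
\[
\lim_{h\to 0}\frac1h\int_t^{t+h} p\bigl(g(\tau)-g(t)\bigr)\,d\tau=0,
\]
which is precisely $p\!\left(\tfrac{F(t+h)-F(t)}{h}-g(t)\right)\to 0$ as $h\to 0$.

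The main obstacle is that the $C^{\nu}$-topology on $V$ is generated by an \emph{uncountable} family of seminorms, since these real-analytic spaces are not metrizable; hence the conull set produced above depends on $p$, and an uncountable intersection of conull sets need not be conull. To overcome this I would reduce, along the curve, to a countable generating subfamily. Since $g$ is Bochner approximable, after discarding a null set its range lies in the closed separable subspace $V_0\subseteq V$ spanned by the countably many values of an approximating sequence of simple functions, and similarly for $F$. Recalling that the $C^{\nu}$-topology on $V$ is the topology of pointwise convergence, under which $V$ embeds as a closed subspace of $C^{\nu}(N)^{C^{\nu}(M)}$, and using the separability of $C^{\nu}(M)$ together with the metrizability of bounded subsets of $C^{\nu}(N)$, one shows that the subspace topology $V$ induces on a bounded set containing the range of $g$ is generated by countably many of the seminorms $p^{\omega}_{K,\mathbf{a},f}$. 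Applying the previous paragraph to this countable subfamily and intersecting the corresponding conull sets yields a single conull set $E\subseteq J$ on which the difference quotients of $F$ converge to $g(t)$ in every seminorm of the reduced family, hence in $V_0$; by completeness the limit exists and, lying in the closed subspace $V$, gives $F'(t)=g(t)\in V$ for $t\in E$. Therefore $\xi$ is differentiable almost everywhere. The delicate point is precisely this countable-seminorm reduction on the essential range, and it is where the separability and nuclearity of $V$ are indispensable.
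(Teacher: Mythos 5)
The per-seminorm core of your argument is correct, and it is a genuinely different route from the paper's. You fix one continuous seminorm $p$, take a countable dense subset $D$ of the separable space $\LC{M}{N}{\nu}$, apply the scalar Lebesgue differentiation theorem to the integrable functions $\tau\mapsto p(g(\tau)-v)$, $v\in D$, and conclude by a triangle-inequality approximation of $g(t)$ by elements of $D$. The paper instead approximates $\eta$ (its name for $g$) in $\mathrm{L}^1$ by a \emph{continuous} curve, using the density of $\mathrm{C}^0(\mathbb{T};\LC{M}{N}{\nu})$ in $\mathrm{L}^1(\mathbb{T};\LC{M}{N}{\nu})$ obtained from the tensor-product identifications of Theorems \ref{th:12} and \ref{th:14}, and then combines a Chebyshev estimate with the Hardy--Littlewood maximal inequality to bound the measure of the exceptional set by $6\epsilon/\alpha$. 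Both arguments deliver exactly the same thing: for each \emph{fixed} seminorm, a conull set of Lebesgue points. Your version is more elementary (no maximal inequality, no tensor products); the paper's version avoids needing a dense countable subset of the range. Notably, the paper's proof stops at the per-seminorm statement and never addresses how to pass from ``a.e.\ for each seminorm'' to ``differentiable a.e.,'' which is precisely the issue you flag: since the generating family of seminorms is uncountable, the union of the exceptional null sets need not be null.

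However, your resolution of that issue contains a genuine gap. The pivotal claim --- that the topology induced on ``a bounded set containing the range of $g$'' is generated by countably many of the seminorms --- founders on the fact that no such bounded set need exist: a Bochner integrable curve need not have bounded essential range (already for scalar functions, $t\mapsto t^{-1/2}$ on $(0,1]$ is integrable with unbounded range, and $t\mapsto t^{-1/2}v_0$ transplants this into $\LC{M}{N}{\nu}$). Passing instead to the closed separable subspace $V_0$ does not help, because separable closed subspaces of these spaces need not be metrizable nor have countably generated topology --- $C^{\omega}(M)$ itself is separable, complete, nuclear, and non-metrizable. Moreover, even on genuinely bounded sets, your metrizability assertion is only sketched: it needs equicontinuity of bounded sets (i.e., a Banach--Steinhaus argument, hence barrelledness of $C^{\nu}(M)$) together with metrizability of bounded subsets of $C^{\nu}(N)$, neither of which is established in the paper or in your proposal. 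Finally, extracting $V_0$ from ``an approximating sequence of simple functions'' conflicts with the paper's definition of Bochner approximability, which furnishes only a \emph{net}; one must first extract a sequence. So the assembly step --- which you yourself identify as the delicate point --- is exactly the part that remains unproven, and as written it relies on an object (a bounded set containing the range) that can fail to exist.
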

\begin{proof}
Without loss of generality, we assume that $\mathbb{T}$ is compact. Then there exists $\eta\in
\mathrm{L}^1(\mathbb{T};\LC{M}{N}{\nu})$ such that
\begin{equation*}
\xi(t)=\xi(t_0)+\int_{t_0}^{t}\eta(\tau)d\tau,\qquad\forall t\in\mathbb{T}.
\end{equation*}
Therefore, it suffice to show that, for almost every $t_0\in \mathbb{T}$, we have 
\begin{equation*}
\limsup_{t\to t_0} \frac{1}{t-t_0} \int_{t_0}^{t}\left(\eta(\tau)-\eta(t_0)\right)d\tau=0.
\end{equation*}
Since $C^0(\mathbb{T})$ is dense in $\mathrm{L}^1(\mathbb{T})$, the
set $C^0(\mathbb{T})\widehat{\otimes}_{\pi} \LC{M}{N}{\nu}$ is dense
in $\mathrm{L}^1(\mathbb{T}) \widehat{\otimes}_{\pi}
\LC{M}{N}{\nu}$ \cite[\S 15.2, Proposition 3(a)]{HJ:1981}. Since the locally convex space $\LC{M}{N}{\nu}$ is complete, by
Theorem \ref{th:12} and Theorem \ref{th:14}, we have
\begin{eqnarray*}
C^0(\mathbb{T})\widehat{\otimes}_{\pi}
  \LC{M}{N}{\nu})&=&C^0(\mathbb{T};\LC{M}{N}{\nu}),\\
\mathrm{L}^1(\mathbb{T}) \widehat{\otimes}_{\pi}
\LC{M}{N}{\nu})&=&\mathrm{L}^1(\mathbb{T};\LC{M}{N}{\nu}).
\end{eqnarray*}
This implies that $C^0(\mathbb{T};\LC{M}{N}{\nu})$ is dense
in $\mathrm{L}^1(\mathbb{T};\LC{M}{N}{\nu})$. Let $\{p_i\}_{i\in I}$
be a generating family of seminorms for $\LC{M}{N}{\nu}$. For
$\epsilon>0$ and $i\in I$, there exists $g\in C^0(\mathbb{T};\LC{M}{N}{\nu})$ such that 
\begin{equation*}
\int_{\mathbb{T}}p_{i}(g(\tau)-\eta(\tau))d\tau<\epsilon.
\end{equation*}
So we assume that $t>t_0$ and we can write
\begin{multline}\label{eq:8}
\frac{1}{t-t_0}\int_{t_0}^{t}p_i\left(\eta(\tau)-\eta(t_0)\right)d\tau
\le \frac{1}{t-t_0}\int_{t_0}^{t} p_i(\eta(\tau)-g(\tau))d\tau\\+\frac{1}{t-t_0}\int_{t_0}^{t} p_i\left(g(\tau)-g(t_0)\right)d\tau+p_{i}(g(t_0)-\eta(t_0)).
\end{multline}
Since $g$ is continuous, we get
\begin{equation*}
\limsup_{t\to t_0}\frac{1}{t-t_0}\int_{t_0}^{t} p_i\left(g(\tau)-g(t_0)\right)d\tau=0.
\end{equation*}
If we take limit supremum of both side of \eqref{eq:8}, we
have
\begin{multline*}
\limsup_{t\to t_0}\left(\frac{1}{t-t_0}\int_{t_0}^{t}p_i\left(\eta(\tau)-\eta(t_0)\right)d\tau \right)\\\le
\limsup_{t\to t_0}\left(\frac{1}{t-t_0}\int_{t_0}^{t} p_i(\eta(\tau)-g(\tau))d\tau\right)+p_{i}(g(t_0)-\eta(t_0)).
\end{multline*}
Now suppose that there exists a set $A$ such that $\mathfrak{m}(A)\ne 0$
and we have
\begin{equation*}
\limsup_{t\to t_0} \left(\frac{1}{t-t_0}
\int_{t_0}^{t}p_i\left(\eta(\tau)-\eta(t_0)\right)d\tau\right)\ne
0,\qquad\forall t_0\in A.
\end{equation*}
This implies that, there exists $\alpha>0$ such that the set $B$ defined as 
\begin{equation*}
B=\left\{t_0\in \mathbb{T}\mid 
\limsup_{t\to t_0}\left(\frac{1}{t-t_0}\int_{t_0}^{t}p_i\left(\eta(\tau)-\eta(t_0)\right)d\tau\right)>\alpha\right\}.
\end{equation*}
has positive Lebesgue measure. However, we have 
\begin{equation*}
\int_{\mathbb{T}} p_i\left(g(\tau)-\eta(\tau)\right)d\tau=
\int_C p_i\left(g(\tau)-\eta(\tau)\right)d\tau+
\int_D p_i\left(g(\tau)-\eta(\tau)\right)d\tau.
\end{equation*}
Where $C,D\subseteq \mathbb{T}$ are defined as 
\begin{eqnarray*}
C&=&\{t_0\in \mathbb{T}\mid p_{i}(g(t_0)-\eta(t_0))>\frac{\alpha}{2}\},\\
D&=&\{t_0\in \mathbb{T}\mid p_{i}(g(t_0)-\eta(t_0)) \le \frac{\alpha}{2}\}.
\end{eqnarray*} 
This implies that 
\begin{equation*}
\int_C p_i\left(g(\tau)-\eta(\tau)\right)d\tau\ge \mathfrak{m}\{C\}\frac{\alpha}{2}.
\end{equation*}
Therefore we have
\begin{equation*}
\int_{\mathbb{T}} p_i\left(g(\tau)-\eta(\tau)\right)d\tau\ge 
\int_C p_i\left(g(\tau)-\eta(\tau)\right)d\tau\ge \mathfrak{m}\{C\}\frac{\alpha}{2}.
\end{equation*}
This means that
\begin{equation*}
\mathfrak{m}\left\{t_0\in \mathbb{T}\mid
p_{i}(g(t_0)-\eta(t_0))>\frac{\alpha}{2}\right\}\le \frac{2}{\alpha}
\int_{\mathbb{T}} p_i\left(g(\tau)-\eta(\tau)\right)d\tau<\frac{2\epsilon}{\alpha}.
\end{equation*}
Also, by \cite[Chapter 1, Theorem 4.3(a)]{JBG:2007}, we have
\begin{multline*}
\mathfrak{m}\left\{t_0\in \mathbb{T}\mid \limsup_{t\to t_0}\left(\frac{1}{t-t_0}\int_{t_0}^{t}
p_i(\eta(\tau)-g(\tau))d\tau\right)>\frac{\alpha}{2}\right\}\\\le \frac{4}{\alpha}
\int_{\mathbb{T}} p_i\left(g(\tau)-\xi(\tau)d\tau\right)<\frac{4\epsilon}{\alpha}.
\end{multline*}
So this implies that 
\begin{multline*}
\mathfrak{m}(B)\le \mathfrak{m}\left\{t_0\in \mathbb{T}\mid
p_{i}(g(t_0)-\eta(t_0))>\frac{\alpha}{2}\right\}\\+\mathfrak{m}\left\{t_0\in
\mathbb{T}\mid \limsup_{t\to t_0}\left(\frac{1}{t-t_0}\int_{t_0}^{t}
p_i(\eta(\tau)-g(\tau))d\tau\right)>\frac{\alpha}{2}\right\}\le\frac{6\epsilon}{\alpha}.
\end{multline*}
Since $\epsilon$ can be chosen arbitrary small, this is a
contradiction. 
\end{proof}

It is easy to see that the space $\ACC{T}{M}{N}{\omega}$ is a vector
space. Let $\{p^{\omega}_{K,\mathbf{a},f}\}$ be the family of generating
seminorms for the $C^{\omega}$-topology on $\LC{M}{N}{\omega}$ and let $\mathbb{T}\subseteq \R$ be an interval. For every compact
subinterval $\mathbb{I}\subseteq\mathbb{T}$, we define the seminorm
$q^{\omega}_{K,\mathbf{a},f,\mathbb{I}}$ as
\begin{equation*}
q^{\omega}_{K,\mathbf{a},f,\mathbb{I}}(X)=\int_{\mathbb{I}} p^{\omega}_{K,\mathbf{a},f}\left(\frac{d
  X}{d\tau}(\tau)(f)\right)d\tau.
\end{equation*}
The family of seminorms $\{p^{\omega}_{K,\mathbf{a},f,\mathbb{I}},
q^{\omega}_{K,\mathbf{a},f, \mathbb{I}}\}$ generates a locally convex topology on the
space $\ACC{T}{M}{N}{\omega}$.

\section{Global extension of real analytic vector fields}\label{sec:8}

As mentioned in the introduction, not every time-varying real analytic
vector field can be extended to a holomorphic one on a 
neighbourhood of its
domain. However, by imposing some appropriate joint condition on time and
state, one can show that such an extension exists. In this
section, we show that every ``locally integrally bounded'' time-varying real
analytic vector field on a real analytic manifold $M$, can be extended
to a locally Bochner integrable, time-varying
holomorphic vector field on a complex neighbourhood of $M$. Moreover,
we show that if $X$ is a continuous time-varying real
analytic vector field, then its extension $\overline{X}$ is a continuous
time-varying holomorphic vector field. 

We state the following lemma which turns out to be useful in
studying extension of real analytic vector fields.  The proof of the
first lemma is given in \cite[Corollary 1]{OH:1963}.

\begin{lemma}\label{lem:1}
Let $\Lambda$ be a directed set and $(E_{\alpha},\{i_{\alpha\beta}\})_{\beta\succeq\alpha}$ be an
inductive family of locally convex
spaces with locally convex inductive limit $(E,\{i_{\alpha}\}_{\alpha\in\Lambda})$. Let $F$ be a subspace
of $E$ such that, for every $\alpha\in \Lambda$, we have
\begin{equation*}
E_{\alpha}=\mathrm{cl}_{E_{\alpha}}\left(i_{\alpha}^{-1}(F)\right).
\end{equation*}
Then $F$ is a dense subset of $E$.
\end{lemma}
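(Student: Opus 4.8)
The plan is to prove density pointwise: I would show that an arbitrary $y\in E$ lies in $\mathrm{cl}_E(F)$ by realizing $y$ as the $i_\alpha$-image of a point of some factor $E_\alpha$ that is approximable, within $E_\alpha$, by points whose $i_\alpha$-images already sit in $F$. The whole argument rests on two structural features of the locally convex inductive limit $(E,\{i_\alpha\})$. First, each canonical map $i_\alpha\colon E_\alpha\to E$ is continuous; indeed the inductive limit topology is by definition the finest locally convex topology making all the $i_\alpha$ continuous, so a seminorm $p$ on $E$ is continuous exactly when each $p\circ i_\alpha$ is continuous on $E_\alpha$. Second, since $\Lambda$ is directed, the limit is filtered, so $E=\bigcup_{\alpha\in\Lambda} i_\alpha(E_\alpha)$: every element of $E$ is $i_\alpha(x)$ for some $\alpha$ and some $x\in E_\alpha$.

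Granting these, I would fix $y\in E$ and write $y=i_\alpha(x)$ with $x\in E_\alpha$. The hypothesis $E_\alpha=\mathrm{cl}_{E_\alpha}(i_\alpha^{-1}(F))$ says $i_\alpha^{-1}(F)$ is dense in $E_\alpha$, so I would pick a net $\{x_\delta\}_{\delta\in\Delta}$ in $i_\alpha^{-1}(F)$ with $x_\delta\to x$ in $E_\alpha$. By definition of the preimage, $i_\alpha(x_\delta)\in F$ for every $\delta$, and continuity of $i_\alpha$ gives $i_\alpha(x_\delta)\to i_\alpha(x)=y$ in $E$. Thus $y\in\mathrm{cl}_E(F)$, and since $y$ was arbitrary, $F$ is dense.

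If one prefers to avoid nets, the same step runs through seminorms, which is the formulation I would actually use since it meshes directly with the seminorm description of the inductive topology: given $y=i_\alpha(x)$, a continuous seminorm $p$ on $E$, and $\epsilon>0$, the seminorm $p\circ i_\alpha$ is continuous on $E_\alpha$, so density of $i_\alpha^{-1}(F)$ supplies $x'\in i_\alpha^{-1}(F)$ with $(p\circ i_\alpha)(x-x')<\epsilon$; then $f:=i_\alpha(x')\in F$ satisfies $p(y-f)<\epsilon$, exhibiting $y$ in the closure of $F$.

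The argument is essentially formal once the structure is unpacked, and the one genuinely substantive ingredient, which I expect to be the only place needing care, is the covering property $E=\bigcup_\alpha i_\alpha(E_\alpha)$. This is precisely where directedness is used: a general element of the algebraic inductive limit is a finite sum $\sum_j i_{\alpha_j}(x_j)$, and choosing $\gamma$ with $\gamma\succeq\alpha_j$ for all $j$ lets me push each summand into $E_\gamma$ through the connecting maps, using $i_{\alpha_j}(x_j)=i_\gamma(i_{\alpha_j\gamma}(x_j))$, so the sum equals $i_\gamma$ of a single element of $E_\gamma$. With that reduction to a single factor in hand, the transfer of density through the continuous map $i_\gamma$ is immediate.
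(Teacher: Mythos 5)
Your proof is correct, but note that the paper itself contains no argument to compare against: it simply attributes the lemma to Corollary 1 of the cited reference [OH:1963]. Your proposal therefore supplies a self-contained proof, and it is the standard one. The two ingredients you isolate are exactly what is needed. The covering property $E=\bigcup_{\alpha\in\Lambda} i_{\alpha}(E_{\alpha})$ is the only substantive point, and your justification of it is right: an element of the algebraic inductive limit is the class of a finite sum $\sum_{j} i_{\alpha_j}(x_j)$, and a common upper bound $\gamma\succeq \alpha_j$ together with the cone compatibility $i_{\alpha_j}=i_{\gamma}\scirc i_{\alpha_j\gamma}$ collapses it to a single $i_{\gamma}$-image; this uses the standing convention (shared by the paper) that the locally convex inductive limit carries the algebraic inductive limit as its underlying vector space, endowed with the finest locally convex topology making all the $i_{\alpha}$ continuous. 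The density transfer through $i_{\alpha}$ is then immediate in either of your formulations: the net version needs only continuity of $i_{\alpha}$, while the seminorm version uses, correctly, that a seminorm $p$ on $E$ is continuous if and only if each $p\scirc i_{\alpha}$ is continuous on $E_{\alpha}$, together with the fact that the sets $\{z\in E \mid p(y-z)<\epsilon\}$, as $p$ ranges over continuous seminorms, form a neighbourhood base at $y$ (finitely many seminorms may be replaced by their maximum). Nothing in the argument requires the limit topology to be Hausdorff, so the proof is complete as stated.
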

Having a directed set $\Lambda$ and an inductive family of locally convex spaces
$(E_{\alpha},\{i_{\alpha\beta}\})_{\beta\succeq\alpha }$, for every
$\beta\succeq \alpha$, one can
define $\tilde{i}_{\alpha\beta}:\mathrm{L}^1(\mathbb{T};E_{\alpha})\to
\mathrm{L}^1(\mathbb{T};E_{\beta})$ as
\begin{equation*}
\tilde{i}_{\alpha\beta}(f)(t)=i_{\alpha\beta}(f(t)),\qquad\forall t\in \mathbb{T}.
\end{equation*}
We can also define the map
$\tilde{i}_{\alpha}:\mathrm{L}^1(\mathbb{T};E_{\alpha})\to\mathrm{L}^1(\mathbb{T};E)$ as
\begin{equation*}
\tilde{i}_{\alpha}(f)(t)=i_{\alpha}(f(t)).
\end{equation*}
Then it is clear that
$(\mathrm{L^1}(\mathbb{T};E_{\alpha}),\{\tilde{i}_{\alpha\beta}\})_{\beta\succeq\alpha
}$ is an inductive family of locally convex spaces. 

\begin{lemma}\label{lem:2}
Let $\mathbb{T}\subseteq \R$ be a compact interval, $\Lambda$ be a directed set, and $(E_{\alpha},\{i_{\alpha\beta}\})_{\beta,\alpha\in\Lambda}$ be an
inductive family of locally convex
spaces with locally convex inductive limit
$(E,\{i_{\alpha}\}_{\alpha\in\Lambda})$. Then
$(\mathrm{L}^1(\mathbb{T}; E_{\alpha}),\{\tilde{i}_{\alpha\beta}\})_{\beta,\alpha\in\Lambda}$ is an
inductive family of locally convex
spaces with locally convex inductive limit
$(\mathrm{L}^1(\mathbb{T}; E),\{\tilde{i}_{\alpha}\}_{\alpha\in\Lambda})$.
\end{lemma}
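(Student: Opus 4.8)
The plan is to reduce the statement to the behaviour of the completed projective tensor product under inductive limits, exploiting the representation of $\mathrm{L}^1(\mathbb{T};V)$ furnished by Theorem \ref{th:12}. First I would dispose of the routine parts. That $(\mathrm{L}^1(\mathbb{T};E_\alpha),\{\tilde i_{\alpha\beta}\})$ is an inductive family is immediate from the pointwise definitions: $\tilde i_{\alpha\alpha}=\mathrm{id}$ and $\tilde i_{\beta\gamma}\scirc\tilde i_{\alpha\beta}=\tilde i_{\alpha\gamma}$ follow from the corresponding identities for $\{i_{\alpha\beta}\}$, and each $\tilde i_{\alpha\beta}$ is continuous because the seminorm $p_{i,\mathbb{T}}(g)=\int_{\mathbb{T}}p_i(g(\tau))\,d\tau$ on $\mathrm{L}^1(\mathbb{T};E_\beta)$ is dominated, via continuity of $i_{\alpha\beta}$, by an integrated seminorm on $E_\alpha$. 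Likewise $\tilde i_\beta\scirc\tilde i_{\alpha\beta}=\tilde i_\alpha$ holds pointwise from $i_\beta\scirc i_{\alpha\beta}=i_\alpha$, so the $\{\tilde i_\alpha\}$ form a cone over the family. What remains is to prove that this cone is universal: that $(\mathrm{L}^1(\mathbb{T};E),\{\tilde i_\alpha\})$ both carries the inductive limit topology and exhausts the colimit as a vector space.

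For the main step I would apply Theorem \ref{th:12} to identify $\mathrm{L}^1(\mathbb{T};E_\alpha)\cong \mathrm{L}^1(\mathbb{T})\widehat{\otimes}_{\pi}E_\alpha$ and $\mathrm{L}^1(\mathbb{T};E)\cong \mathrm{L}^1(\mathbb{T})\widehat{\otimes}_{\pi}E$, under which $\tilde i_{\alpha\beta}$ and $\tilde i_\alpha$ become $\mathrm{id}\,\widehat{\otimes}\,i_{\alpha\beta}$ and $\mathrm{id}\,\widehat{\otimes}\,i_\alpha$. The claim then becomes the commutation
\begin{equation*}
\varinjlim\bigl(\mathrm{L}^1(\mathbb{T})\widehat{\otimes}_{\pi}E_\alpha\bigr)\;\cong\;\mathrm{L}^1(\mathbb{T})\widehat{\otimes}_{\pi}\bigl(\varinjlim E_\alpha\bigr).
\end{equation*}
At the level of the uncompleted projective tensor product this is a general and easy fact: $\mathrm{L}^1(\mathbb{T})\otimes_\pi(-)$ preserves inductive limits (continuity of a bilinear map out of $\mathrm{L}^1(\mathbb{T})\times E$ is tested on each $\mathrm{L}^1(\mathbb{T})\times E_\alpha$), and concretely any element $\sum_k g_k\otimes e_k$ with $e_k\in E$ has all $e_k$ lying in a single $i_{\alpha_0}(E_{\alpha_0})$ by directedness, hence factors through $\mathrm{L}^1(\mathbb{T})\otimes_\pi E_{\alpha_0}$. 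The genuine content of the lemma is that this factorization survives completion.

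The hard part is precisely this passage to the completion, since completion does not commute with inductive limits in general. I would attack it through density together with a completeness/absorption argument. The simple functions are dense in $\mathrm{L}^1(\mathbb{T};E)$, and because $E=\bigcup_\alpha i_\alpha(E_\alpha)$ every simple function $\sum_k\chi_{A_k}v_k$ has its finitely many values $v_k$ in one $i_{\alpha_0}(E_{\alpha_0})$, so it lies in $\tilde i_{\alpha_0}\bigl(S(\mathbb{T};E_{\alpha_0})\bigr)$. Thus $\bigcup_\alpha\tilde i_\alpha\bigl(\mathrm{L}^1(\mathbb{T};E_\alpha)\bigr)$ is dense in $\mathrm{L}^1(\mathbb{T};E)$, which delivers uniqueness in the universal property at once and reduces surjectivity onto the whole space to promoting the factorization from dense simple functions to arbitrary Bochner integrable curves.

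This promotion is the step I expect to be delicate: an arbitrary $f\in\mathrm{L}^1(\mathbb{T};E)$ need not have its range in a single step $E_{\alpha_0}$, so one cannot argue naively. I would resolve it by assuming, as holds in all our applications, that the system is a regular inductive limit, so that the bounded, integrable range of a Bochner integrable curve is absorbed by one $E_{\alpha_0}$; a dominated-convergence argument then upgrades an approximating sequence of simple functions living in $\mathrm{L}^1(\mathbb{T};E_{\alpha_0})$ to a limit in $\mathrm{L}^1(\mathbb{T};E_{\alpha_0})$ mapping to $f$, and the completeness of the Banach (or complete) steps identifies the inductive limit topology with the native $\mathrm{L}^1$-topology. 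The required regularity is exactly what Theorem \ref{th:25} provides for the compact inductive limit of Theorem \ref{th:2}, so this absorption of the range by a single space—together with the completeness of the steps—is where the structural hypotheses of the setting must be brought to bear.
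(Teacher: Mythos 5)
Your first three steps --- checking the inductive-family axioms, identifying $\mathrm{L}^1(\mathbb{T};E_\alpha)$ with $\mathrm{L}^1(\mathbb{T})\widehat{\otimes}_\pi E_\alpha$ via Theorem \ref{th:12}, commutation at the uncompleted tensor-product level, and density of the step-wise images via simple functions --- run parallel to the paper's argument; for the uncompleted commutation the paper cites \cite[Corollary 4, \S 15.5]{HJ:1981}, valid precisely because $\mathrm{L}^1(\mathbb{T})$ is normable. The divergence, and the gap, lies in how you close the argument. The paper never attempts to show that an arbitrary $f\in\mathrm{L}^1(\mathbb{T};E)$ factors through a single step. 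Instead it sets $F=\mathrm{L}^1(\mathbb{T})\otimes_\pi E$, observes that $\mathrm{L}^1(\mathbb{T})\otimes_\pi E_\alpha\subseteq \tilde i_\alpha^{-1}(F)$, hence $\mathrm{cl}\left(\tilde i_\alpha^{-1}(F)\right)=\mathrm{L}^1(\mathbb{T};E_\alpha)$ for every $\alpha$, and then invokes the density criterion of Lemma \ref{lem:1} to conclude that $F$ is dense in $\varinjlim_\alpha \mathrm{L}^1(\mathbb{T};E_\alpha)$; since $F$ is also dense in $\mathrm{L}^1(\mathbb{T};E)$ and carries the compatible $\pi$-topology by the Jarchow identification, the two limits are identified. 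This argument uses nothing beyond the hypotheses of the lemma: it works for an arbitrary directed set $\Lambda$ and arbitrary locally convex steps $E_\alpha$.

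Your closing step instead imports an assumption that is not in the statement: regularity of the inductive family. This is a genuine gap, for two reasons. First, the lemma is applied in Theorem \ref{th:21} to the uncountable family $\mathscr{N}_M$ of all neighbourhoods of $M$, and the paper never establishes regularity for that family --- the compactness/weak-compactness results (Theorem \ref{th:25}, Theorem \ref{th:2}, Theorem \ref{th:28}) concern only the countable germ limits $\mathscr{G}^{\hol,\R}_K$ around compact sets $K$. So ``as holds in all our applications'' is exactly what is unavailable in the application this lemma exists to serve. Second, even granting regularity, your absorption argument does not go through as written: regularity absorbs \emph{bounded} subsets of $E$ into a single step, but a Bochner integrable curve need not have bounded range (already $t\mapsto t^{-1/2}$ on $(0,1]$ is scalar-valued, integrable, and unbounded), so the phrase ``the bounded, integrable range of a Bochner integrable curve is absorbed by one $E_{\alpha_0}$'' fails at the outset; one would need a finer argument about an approximating sequence of simple functions converging within a single step, which is not supplied. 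To your credit, you have put your finger on a real subtlety: density is strictly weaker than the step-wise factorization that Corollary \ref{th:11} later extracts, and that stronger conclusion is what the boundedly retractive limits of Theorem \ref{th:28} are for. But as a proof of Lemma \ref{lem:2} as stated, your route both assumes hypotheses the lemma does not have and still leaves its key absorption step unjustified.
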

\begin{proof}
Since $\mathrm{L}^1(\mathbb{T})$ is a normable space, by
\cite[Corollary 4, \S 15.5]{HJ:1981}, we have $\varinjlim_{\alpha} \mathrm{L}^1(\mathbb{T})\otimes_{\pi}
E_{\alpha}=\mathrm{L}^1(\mathbb{T})\otimes_{\pi}
E$. Let $F=\mathrm{L}^1(\mathbb{T})\otimes_{\pi} E$. Then, for every
$\alpha\in \Lambda$, we have 
\begin{equation*}
 \mathrm{L}^1(\mathbb{T})\otimes_{\pi}E_{\alpha}\subseteq
 \tilde{i}_{\alpha}^{-1}(F).
\end{equation*}
This implies that
\begin{equation*}
\mathrm{L}^1(\mathbb{T}; E_{\alpha}) = \mathrm{cl}\left( \tilde{i}_{\alpha}^{-1}(F)\right).
\end{equation*}
Then by using Lemma \ref{lem:1}, we have that $F$ is a dense subset
of $\varinjlim_{\alpha} \mathrm{L}^1(\mathbb{T}; E_{\alpha})$. This means that $\varinjlim_{\alpha} \mathrm{L}^1(\mathbb{T}; E_{\alpha})=\mathrm{L}^1(\mathbb{T}; E)$.
\end{proof}

Using Lemmata \ref{lem:1} and \ref{lem:2}, one can deduce the
following result which we refer to as the global extension of
real analytic vector fields.

\begin{theorem}\label{th:21}
Let $M$ be a real analytic manifold and let $\mathscr{N}_M$ be the
family of all neighbourhoods of $M$. Then we have
\begin{equation*}
\varinjlim_{\overline{U}_M\in \mathscr{N}_M} \mathrm{L}^1(\mathbb{T};\Gamma^{\hol,\R}(\overline{U}_M))=\mathrm{L}^1(\mathbb{T};\Gamma^{\omega}(TM)).
\end{equation*}
\end{theorem}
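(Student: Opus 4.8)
The plan is to read Theorem \ref{th:21} as nothing more than the image, under the functor $\mathrm{L}^1(\mathbb{T};-)$, of the inductive presentation that \emph{defines} the $C^{\omega}$-topology on $\Gamma^{\omega}(TM)$, and then to invoke Lemma \ref{lem:2}, which is precisely the statement that this functor commutes with such inductive limits. Recall that $\mathscr{N}_M$ is directed by taking intersections, so that $\overline{U}\cap\overline{V}$ refines any two neighbourhoods $\overline{U},\overline{V}$ of $M$, and that the connecting morphisms are the holomorphic restriction maps; these preserve the reality condition $X(x)\in T_xM$ and hence send $\Gamma^{\hol,\R}(T\overline{U})$ into $\Gamma^{\hol,\R}(T\overline{V})$ whenever $\overline{V}\subseteq\overline{U}$. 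By the definition of the inductive topology, this is an inductive family of locally convex spaces whose inductive limit, with canonical maps $i^{\R}_{\overline{U}}=\;\cdot\mid_M$, is
\begin{equation*}
\varinjlim_{\overline{U}\in\mathscr{N}_M}\Gamma^{\hol,\R}(T\overline{U})=\Gamma^{\omega}(TM).
\end{equation*}

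The one substantive verification is the completeness that Lemma \ref{lem:2} tacitly needs through the tensor-product identification of Theorem \ref{th:12}. First I would note that each $\Gamma^{\hol}(T\overline{U})$ is complete by the structure theorem of \S\ref{sec:6} (applied to the complex manifold $\overline{U}$), and that $\Gamma^{\hol,\R}(T\overline{U})$ is carved out inside it by the closed condition $X(x)\in T_xM$ for all $x\in M$; being a closed subspace of a complete space, it is itself complete. The limit $\Gamma^{\omega}(TM)$ is complete by Theorem \ref{th:60}. Thus every member of the inductive family, as well as its inductive limit, is a complete locally convex space, so the hypotheses of Lemma \ref{lem:2} are met.

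With this in hand the argument is immediate. We may assume $\mathbb{T}$ is compact (otherwise exhaust $\mathbb{T}$ by compact subintervals). Applying Lemma \ref{lem:2} with $E_{\alpha}=\Gamma^{\hol,\R}(T\overline{U}_{\alpha})$ and $E=\Gamma^{\omega}(TM)$, the family $\bigl(\mathrm{L}^1(\mathbb{T};\Gamma^{\hol,\R}(T\overline{U})),\{\tilde{i}_{\alpha\beta}\}\bigr)$ is an inductive family whose inductive limit is $\mathrm{L}^1(\mathbb{T};\Gamma^{\omega}(TM))$, with canonical maps $\tilde{i}^{\R}_{\overline{U}}(f)(t)=f(t)\mid_M$. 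This is exactly
\begin{equation*}
\varinjlim_{\overline{U}_M\in\mathscr{N}_M}\mathrm{L}^1\bigl(\mathbb{T};\Gamma^{\hol,\R}(T\overline{U}_M)\bigr)=\mathrm{L}^1\bigl(\mathbb{T};\Gamma^{\omega}(TM)\bigr),
\end{equation*}
as claimed. I expect no obstacle beyond the two bookkeeping points already flagged: the completeness of the real-holomorphic spaces $\Gamma^{\hol,\R}(T\overline{U})$ (handled above) and the reduction from a general interval to a compact one, both of which are routine. The directedness of $\mathscr{N}_M$ by intersection and the functoriality of restriction are exactly the data used to build the $C^{\omega}$-topology, so the only genuine content of the theorem is the commutation of $\mathrm{L}^1(\mathbb{T};-)$ with the limit, which is supplied wholesale by Lemma \ref{lem:2}.
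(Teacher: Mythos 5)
Your proof is correct and follows essentially the same route as the paper: the paper's own proof of Theorem \ref{th:21} consists precisely of applying Lemma \ref{lem:2} to the inductive family $\{\Gamma^{\hol,\R}(T\overline{U})\}_{\overline{U}\in\mathscr{N}_M}$ of restriction maps whose locally convex inductive limit is $\Gamma^{\omega}(TM)$. Your additional bookkeeping (completeness of $\Gamma^{\hol,\R}(T\overline{U})$ as a closed subspace of the complete space $\Gamma^{\hol}(T\overline{U})$, needed for the identification in Theorem \ref{th:12}, and the reduction to compact $\mathbb{T}$) only makes explicit what the paper leaves implicit.
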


\begin{corollary}\label{th:11}
Let $X\in \mathrm{L}^1(\mathbb{T};\Gamma^{\omega}(TM))$. There exists a
neighbourhood $\overline{U}_M$ of $M$ and a locally Bochner
integrable time-varying holomorphic vector field $\overline{X}\in
\mathrm{L}^1(\mathbb{T};\Gamma^{\hol}(\overline{U}_M))$ such that
$\overline{X}(t,x)=X(t,x)$, for every $t\in \mathbb{T}$ and every
$x\in M$.
\end{corollary}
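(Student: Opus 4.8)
The plan is to read the corollary off directly from Theorem~\ref{th:21}, the global extension theorem, by unwinding what it means for an element to belong to a locally convex inductive limit; all of the analytic work has already been invested in that theorem, so what remains is essentially formal. Recall from the construction preceding Lemma~\ref{lem:2} that the canonical map $\tilde{i}_{\overline{U}_M}\colon \mathrm{L}^1(\mathbb{T};\Gamma^{\hol,\R}(\overline{U}_M))\to \mathrm{L}^1(\mathbb{T};\Gamma^{\omega}(TM))$ into the limit is induced pointwise in $t$ by the restriction map $i^{\R}_{\overline{U}_M}\colon \Gamma^{\hol,\R}(T\overline{U}_M)\to\Gamma^{\omega}(TM)$, $Y\mapsto Y\mid_M$, so that $\tilde{i}_{\overline{U}_M}(Z)(t)=Z(t)\mid_M$ for almost every $t\in\mathbb{T}$.

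The load-bearing step is the standard fact that, for an inductive limit over a directed index set, every element of the limit is already the image of an element of one of the constituent spaces. Here the index set $\mathscr{N}_M$ is directed: given two neighbourhoods of $M$ in $M^{\C}$, their intersection is again such a neighbourhood and the restriction maps carry both of the corresponding spaces into the space attached to the intersection. Consequently the underlying vector space of $\varinjlim_{\overline{U}_M}\mathrm{L}^1(\mathbb{T};\Gamma^{\hol,\R}(\overline{U}_M))$ is the set-theoretic union of the images of the maps $\tilde{i}_{\overline{U}_M}$. Invoking the identification furnished by Theorem~\ref{th:21}, I would therefore, starting from $X\in \mathrm{L}^1(\mathbb{T};\Gamma^{\omega}(TM))$, extract a neighbourhood $\overline{U}_M\in\mathscr{N}_M$ and a curve $\overline{X}\in \mathrm{L}^1(\mathbb{T};\Gamma^{\hol,\R}(\overline{U}_M))$ satisfying $\tilde{i}_{\overline{U}_M}(\overline{X})=X$.

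It then remains only to translate this equality into the asserted conclusion. By the description of $\tilde{i}_{\overline{U}_M}$ above, $\tilde{i}_{\overline{U}_M}(\overline{X})=X$ says that $\overline{X}(t)\mid_M=X(t)$ for almost every $t\in\mathbb{T}$, that is, $\overline{X}(t,x)=X(t,x)$ for almost every $t$ and every $x\in M$; passing to a representative of the $\mathrm{L}^1$-class of $\overline{X}$ sharpens ``almost every $t$'' to ``every $t$''. Since $\Gamma^{\hol,\R}(\overline{U}_M)\subseteq\Gamma^{\hol}(\overline{U}_M)$, the curve $\overline{X}$ is a fortiori an element of $\mathrm{L}^1(\mathbb{T};\Gamma^{\hol}(\overline{U}_M))$; on the compact interval $\mathbb{T}$ this membership is exactly the asserted local Bochner integrability of the holomorphic extension (cf.\ Theorem~\ref{th:3}).

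I do not anticipate a genuine obstacle, precisely because Theorem~\ref{th:21} carries the weight; the one point I would be careful to state is the ``union of images'' property invoked above, since it is exactly what upgrades the \emph{equality of spaces} in Theorem~\ref{th:21} to the \emph{existence of a single} neighbourhood $\overline{U}_M$ carrying the entire extension. I would also note explicitly that $\mathbb{T}$ is taken compact here, in accordance with Lemma~\ref{lem:2} and Theorem~\ref{th:21}; on such an interval Bochner and local Bochner integrability coincide, so the single neighbourhood $\overline{U}_M$ suffices and the word ``locally'' in the statement is harmless rather than forcing a subinterval exhaustion whose neighbourhood could vary.
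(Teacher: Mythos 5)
Your proposal matches the paper's treatment: the paper offers no separate argument for Corollary~\ref{th:11}, presenting it as an immediate consequence of Theorem~\ref{th:21} via exactly the union-of-images property of a directed locally convex inductive limit that you spell out, followed by the same translation $\tilde{i}_{\overline{U}_M}(\overline{X})=X \Leftrightarrow \overline{X}\mid_{\mathbb{T}\times M}=X$ and the inclusion $\Gamma^{\hol,\R}(T\overline{U}_M)\subseteq\Gamma^{\hol}(T\overline{U}_M)$. One small remark: in this paper $\mathrm{L}^1(\mathbb{T};V)$ consists of genuine curves (not a.e.-equivalence classes) and $\tilde{i}_{\overline{U}_M}$ is defined pointwise in $t$, so the equality $\tilde{i}_{\overline{U}_M}(\overline{X})=X$ already holds for \emph{every} $t$; the passage to a representative you invoke is unnecessary, and would in fact be delicate if it were needed, since for $t$ in an exceptional null set $X(t)$ need not admit any holomorphic extension to the fixed neighbourhood $\overline{U}_M$.
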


Similarly, one can study the extension of continuous time-varying real analytic
vector fields. While a continuous time-varying real analytic
vector fields is locally Bochner integrable, it has a
holomorphic extension to a suitable domain. However, this raises the
question of whether
the holomorphic extension of a ``continuous'' time-varying real
analytic vector field is a
``continuous'' time-varying holomorphic vector field or not. Using the
following lemma, we show that the answer to the above question is positive.

\begin{lemma}
Let $K$ be a compact topological space, $\Lambda$ be a directed
set, and $(E_{\alpha},\{i_{\alpha\beta}\})_{\beta\succeq\alpha }$ be an
inductive family of nuclear locally convex spaces with locally convex inductive
limit $(E, \{i_{\alpha}\})_{\alpha\in\Lambda }$. Suppose that $E$ is
also a nuclear space. Then $(\mathrm{C}^0(K;E_{\alpha}),\{\hat{i}_{\alpha\beta}\})_{\beta\succeq\alpha
}$ is an inductive family of locally convex spaces with inductive limit
$(\mathrm{C}^0(K; E) ,\{\hat{i}_{\alpha}\}_{\alpha \in\Lambda})$.
\end{lemma}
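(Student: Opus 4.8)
The plan is to follow the structure of the proof of Lemma \ref{lem:2}, replacing the space $\mathrm{L}^1(\mathbb{T})$ by the Banach space $\mathrm{C}^0(K)$ and the projective tensor product by the injective one. First I would verify the routine part: for $\beta\succeq\alpha$ define $\hat{i}_{\alpha\beta}:\mathrm{C}^0(K;E_{\alpha})\to \mathrm{C}^0(K;E_{\beta})$ and $\hat{i}_{\alpha}:\mathrm{C}^0(K;E_{\alpha})\to\mathrm{C}^0(K;E)$ by $\hat{i}_{\alpha\beta}(f)(x)=i_{\alpha\beta}(f(x))$ and $\hat{i}_{\alpha}(f)(x)=i_{\alpha}(f(x))$ for $x\in K$. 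Since $i_{\alpha\alpha}=\mathrm{id}$ and $i_{\alpha\gamma}=i_{\beta\gamma}\scirc i_{\alpha\beta}$, the same identities hold after composing with evaluation at each $x\in K$, so $(\mathrm{C}^0(K;E_{\alpha}),\{\hat{i}_{\alpha\beta}\})$ is an inductive family of locally convex spaces. The content of the lemma is the identification of its inductive limit with $\mathrm{C}^0(K;E)$.

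Since $K$ is a compact topological space and each $E_{\alpha}$ and $E$ is complete (being nuclear), the general compact-$K$ version of Theorem \ref{th:14} gives linear homeomorphisms $\mathrm{C}^0(K;E_{\alpha})=\mathrm{C}^0(K)\widehat{\otimes}_{\epsilon}E_{\alpha}$ and $\mathrm{C}^0(K;E)=\mathrm{C}^0(K)\widehat{\otimes}_{\epsilon}E$. The crucial step, and the only place the nuclearity hypotheses enter, is to replace the injective tensor product by the projective one: since $E_{\alpha}$ (respectively $E$) is nuclear, the injective and projective topologies on $\mathrm{C}^0(K)\otimes E_{\alpha}$ (respectively $\mathrm{C}^0(K)\otimes E$) coincide, so $\mathrm{C}^0(K)\widehat{\otimes}_{\epsilon}E_{\alpha}=\mathrm{C}^0(K)\widehat{\otimes}_{\pi}E_{\alpha}$ and likewise for $E$. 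Note that this reduction must use nuclearity of the factor $E_{\alpha}$, not of $\mathrm{C}^0(K)$, which is an infinite-dimensional Banach space and hence never nuclear; this is exactly why the hypotheses ask that the $E_{\alpha}$ and $E$ be nuclear, and why the nuclearity of the inductive limit $E$ is assumed separately, an inductive limit of nuclear spaces need not be nuclear.

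Once everything is expressed with $\otimes_{\pi}$, the argument closes exactly as in Lemma \ref{lem:2}. Because $\mathrm{C}^0(K)$ is normable, \cite[Corollary 4, \S 15.5]{HJ:1981} gives $\varinjlim_{\alpha}\mathrm{C}^0(K)\otimes_{\pi}E_{\alpha}=\mathrm{C}^0(K)\otimes_{\pi}E$. Setting $F=\mathrm{C}^0(K)\otimes_{\pi}E$, one has $\mathrm{C}^0(K)\otimes_{\pi}E_{\alpha}\subseteq \hat{i}_{\alpha}^{-1}(F)$ for every $\alpha$, and since the algebraic tensor product is dense in its completion this yields $\mathrm{C}^0(K;E_{\alpha})=\mathrm{cl}(\hat{i}_{\alpha}^{-1}(F))$. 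Applying Lemma \ref{lem:1} to the inductive family $(\mathrm{C}^0(K;E_{\alpha}),\{\hat{i}_{\alpha\beta}\})$ with the subspace $F$ shows that $F$ is dense in $\varinjlim_{\alpha}\mathrm{C}^0(K;E_{\alpha})$, and since $F$ is also dense in $\mathrm{C}^0(K;E)$ with matching induced topology we conclude $\varinjlim_{\alpha}\mathrm{C}^0(K;E_{\alpha})=\mathrm{C}^0(K;E)$. The main obstacle I anticipate is precisely the nuclearity reduction of the second paragraph, together with securing the compact-$K$ generalization of Theorem \ref{th:14}; the remaining bookkeeping is a direct transcription of the proof of Lemma \ref{lem:2}.
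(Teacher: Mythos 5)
Your proposal is correct and follows essentially the same route as the paper's proof: both hinge on the normability of $\mathrm{C}^0(K)$ together with \cite[Corollary 4, \S 15.5]{HJ:1981} to commute the inductive limit with the tensor product, use nuclearity of the $E_{\alpha}$ and of $E$ to identify $\mathrm{C}^0(K)\otimes_{\pi}$ with $\mathrm{C}^0(K)\otimes_{\epsilon}$, and conclude with the density argument of Lemma \ref{lem:1}. The only difference is cosmetic — the paper takes the $\pi$-limit first and names the dense subspace $F=\mathrm{C}^0(K)\otimes_{\epsilon}E$ whereas you work with $F=\mathrm{C}^0(K)\otimes_{\pi}E$, which the nuclearity identification makes the same space.
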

\begin{proof}
Since $\mathrm{C}^0(K)$ is a normable space, by
\cite[Corollary 4, \S 15.5]{HJ:1981}, we have $\varinjlim_{\alpha} \mathrm{C}^0(K)\otimes_{\pi}
E_{\alpha}=\mathrm{C}^0(K)\otimes_{\pi}
E$. For every $\alpha\in \Lambda$,
the space $E_{\alpha}$ is nuclear. Therefore, by \cite[\S 21.3, Theorem 1]{HJ:1981}, we have
\begin{equation*}
\mathrm{C}^0(K)\otimes_{\pi} E_{\alpha}=\mathrm{C}^0(K)\otimes_{\epsilon}
E_{\alpha},\qquad\forall \alpha\in \Lambda.
\end{equation*}
Moreover, the space $E$ is nuclear. So, again using \cite[\S 21.3, Theorem 1]{HJ:1981}, we have
\begin{equation*}
\mathrm{C}^0(K)\otimes_{\pi}
E=\mathrm{C}^0(K)\otimes_{\epsilon} E.
\end{equation*}
This implies that 
\begin{equation*}
\varinjlim_{\alpha} \mathrm{C}^0(K)\otimes_{\epsilon}
E_{\alpha}=\mathrm{C}^0(K)\otimes_{\epsilon} E.
\end{equation*}
We set 
$F=\mathrm{C}^0(K)\otimes_{\epsilon}E$. Then, for every
$\alpha\in \Lambda$, we have 
\begin{equation*}
 \mathrm{C}^0(K)\otimes_{\epsilon}E_{\alpha}\subseteq
\hat{i}^{-1}_{\alpha}(F).
\end{equation*}
This implies that
\begin{equation*}
\mathrm{C}^0(K;E_{\alpha}) \subseteq \mathrm{cl}\left(\hat{i}^{-1}_{\alpha}F\right).
\end{equation*}
Then, by using Lemma \ref{lem:1}, we have that $F$ is a dense subset
of $\varinjlim_{\alpha} \mathrm{C}^0(K; E_{\alpha})$. This means that we have $\varinjlim_{\alpha}
\mathrm{C}^0(K; E_{\alpha})=\mathrm{C}^0(K; E)$. 
\end{proof}

\begin{theorem}\label{th:22}
Let $K$ be a compact topological space, $M$ be a real analytic vector field and $\mathscr{N}_M$ be the
family of all neighbourhoods of $M$, which is a directed set under
inclusion. Then we have
\begin{equation*}
\varinjlim_{\overline{U}_M\in\mathscr{N}_M} \mathrm{C}^0(K;
\Gamma^{\hol}(\overline{U}_M))=\mathrm{C}^0(K;\Gamma^{\omega}(TM)).
\end{equation*}
\end{theorem}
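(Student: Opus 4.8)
The plan is to obtain this identity as a direct application of the Lemma immediately preceding it, which lifts an inductive limit of nuclear spaces to an inductive limit of the associated spaces of continuous $K$-indexed families, provided the limit is itself nuclear. First I would set up the inductive system: take the directed set $\mathscr{N}_M$ with connecting morphisms given by restriction of holomorphic vector fields to smaller neighbourhoods, and the inductive family $E_{\overline{U}}=\Gamma^{\hol}(T\overline{U})$ (equivalently its real form $\Gamma^{\hol,\R}(T\overline{U})$, which is the family used to define the inductive topology in Section \ref{sec:6}). By that definition, together with Martineau's identification of the inductive and projective limit topologies, the locally convex inductive limit of this family is exactly $E=\Gamma^{\omega}(TM)$, with limit maps the inclusions $i^{\R}_{\overline{U}}$.

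Next I would verify the two nuclearity hypotheses of the Lemma. Each member $E_{\overline{U}}$ is nuclear: since $\overline{U}$ is a complex manifold, the space $\Gamma^{\hol}(T\overline{U})$ with its $C^{\hol}$-topology is complete and nuclear by the structure theorem for holomorphic vector fields recalled in Section \ref{sec:6}; passing to the closed real-form subspace $\Gamma^{\hol,\R}(T\overline{U})$ preserves nuclearity, because closed subspaces of nuclear spaces are nuclear. The limit $\Gamma^{\omega}(TM)$ is nuclear by Theorem \ref{th:60}. Thus every space appearing in the hypotheses of the Lemma is nuclear, and the Lemma applies verbatim.

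The conclusion of the Lemma is then precisely that $(\mathrm{C}^0(K;E_{\overline{U}}),\{\hat{i}_{\overline{U}\,\overline{V}}\})$ is an inductive family whose locally convex inductive limit is $\mathrm{C}^0(K;E)=\mathrm{C}^0(K;\Gamma^{\omega}(TM))$, which is the asserted equality. I expect the only delicate point to be confirming that the limit produced by the Lemma is literally the entire space $\mathrm{C}^0(K;\Gamma^{\omega}(TM))$ rather than a dense subspace continuously embedded in it. This is exactly what the density argument inside the preceding Lemma secures: nuclearity is used to replace the projective tensor products $\mathrm{C}^0(K)\otimes_{\pi}E_{\overline{U}}$ by the injective ones $\mathrm{C}^0(K)\otimes_{\epsilon}E_{\overline{U}}$, the inductive limit is commuted past the tensor product (using that $\mathrm{C}^0(K)$ is normable), and Lemma \ref{lem:1} then identifies the resulting dense subspace with the full inductive limit. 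Since all of this machinery is already established, no additional analytic estimates are needed beyond checking the hypotheses above.
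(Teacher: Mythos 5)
Your proposal matches the paper's own proof: the paper likewise defines the lifted connecting maps $\hat{i}_{\alpha\beta}$ and $\hat{i}_{\alpha}$ on the spaces $\mathrm{C}^0(K;E_\alpha)$ and then invokes the preceding nuclearity lemma (whose density argument via Lemma \ref{lem:1} and the $\otimes_{\pi}=\otimes_{\epsilon}$ identification is exactly the machinery you describe). Your explicit verification of the nuclearity hypotheses, via Theorem \ref{th:60} and the stability of nuclearity under closed subspaces, is a correct filling-in of details the paper leaves implicit, so the argument is sound and essentially identical.
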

\begin{proof}
Let $\Lambda$ be a directed set and
$(E_{\alpha},\{i_{\alpha\beta}\})_{\beta\succeq\alpha }$ be a directed
system of locally convex spaces. Then, for every $\beta\succeq \alpha$, one can
define $\hat{i}_{\alpha\beta}:\mathrm{C}^0(K;E_{\alpha})\to
\mathrm{C}^0(K;E_{\beta})$ as
\begin{equation*}
\hat{i}_{\alpha\beta}(f)(u)=i_{\alpha\beta}(f(u)),\qquad\forall u\in K.
\end{equation*}
For every $\alpha\in \Lambda$, we can also define the map
$\hat{i}_{\alpha}:\mathrm{C}^0(K;E_{\alpha})\to\mathrm{C}^0(K;E)$ as
\begin{equation*}
\hat{i}_{\alpha}(f)(u)=i_{\alpha}(f(u)),\qquad\forall u\in K.
\end{equation*}
Then it is clear that
$(\mathrm{C}^0(K;E_{\alpha}),\{\hat{i}_{\alpha\beta}\})_{\beta\succeq\alpha
}$ is an inductive family of locally convex spaces. The result follows from the above lemma.
\end{proof}

\section{Local extension of real analytic vector fields} \label{sec:9}

In the previous section, we proved that every locally Bochner
integrable real analytic vector field on $M$ has a holomorphic
extension on a neighbourhood of $M$. However, this result is true for extending one vector field. It is natural to ask that, if we have
a family of locally integrally bounded real analytic vector fields on $M$, can
we extend every member of the family to holomorphic vector fields on
one 
neighbourhood of $M$? In order to answer this question, we need a finer
result for the extension of real analytic vector fields. We will see that the projective
limit representation of the space of real analytic vector fields helps us
to get this extension result.

\begin{theorem}\label{th:28}
Let $K\subseteq M$ be a compact set and 
$\{\overline{U}_n\}_{n\in\N}$ be a sequence of neighbourhoods of
$M$ such that
\begin{equation*}
\mathrm{cl}(\overline{U}_{n+1})\subseteq \overline{U}_n,\qquad\forall
n\in \N.
\end{equation*}
and $\bigcap_{n\in \N} \overline{U}_n=K$. Then we 
have $\varinjlim_{n\to\infty}
\mathrm{L}^1(\mathbb{T};\Gamma^{\hol}_{\bdd}(\overline{U}_n))=\mathrm{L}^1(\mathbb{T};\mathscr{G}^{\hol,\R}_K)$. Moreover
the direct limit is weakly compact and boundedly retractive. 
\end{theorem}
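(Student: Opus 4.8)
The plan is to separate the statement into its two parts---the inductive-limit identity and the ``moreover'' clause---and to treat the second part by transferring Retakh's condition $(M)$ rather than compactness, which is where the genuine difficulty lies. Throughout I may assume $\mathbb{T}$ compact (the locally Bochner integrable case reduces to compact subintervals), and I work with the real subspaces $\Gamma^{\hol,\R}_{\bdd}(T\overline{U}_n)$, whose inductive limit is exactly $\mathscr{G}^{\hol,\R}_K$; since these are closed subspaces, they inherit every relevant property, so the mild mismatch between the real germ space on the right and the symbol $\Gamma^{\hol}_{\bdd}$ on the left is harmless.

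First I would set up the base family. Applying Theorem \ref{th:2} to the shrinking neighbourhoods $\{\overline{U}_n\}_{n\in\N}$ of $K$, the pair $(\Gamma^{\hol}_{\bdd}(T\overline{U}_n),\{r_n\scirc\rho_{\overline{U}_n}\})$ is an inductive family of Banach spaces (each is Banach by Theorem \ref{th:17}) whose connecting maps are compact and whose inductive limit is $\mathscr{G}^{\hol}_K$; restricting to the real parts gives the same picture with limit $\mathscr{G}^{\hol,\R}_K$. By Theorem \ref{th:25} (both parts, since compact maps are weakly compact) this base family is simultaneously regular and boundedly retractive, so Theorem \ref{th:24} shows it satisfies condition $(M)$: there are absolutely convex $0$-neighbourhoods $\mathcal{U}_n\subseteq\Gamma^{\hol,\R}_{\bdd}(T\overline{U}_n)$ with $i_n(\mathcal{U}_n)\subseteq\mathcal{U}_{n+1}$ on which the topologies induced from all sufficiently late steps coincide. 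Then, since each $\Gamma^{\hol,\R}_{\bdd}(T\overline{U}_n)$ is complete and $\mathbb{T}$ is compact, Lemma \ref{lem:2} applies directly and yields the asserted identity $\varinjlim_n \mathrm{L}^1(\mathbb{T};\Gamma^{\hol,\R}_{\bdd}(T\overline{U}_n))=\mathrm{L}^1(\mathbb{T};\mathscr{G}^{\hol,\R}_K)$. Each $\mathrm{L}^1(\mathbb{T};\Gamma^{\hol,\R}_{\bdd}(T\overline{U}_n))$ is itself a Banach space, so the lifted family is again a countable inductive family of normed spaces, and Theorem \ref{th:24} is available to it once its regularity is known.

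The main obstacle is that one cannot transfer the compactness, or even the weak compactness, of the connecting maps to the lift. Indeed, the lifted connecting map is $f\mapsto i_n\scirc f$, and already for scalar data this is the identity on $\mathrm{L}^1(\mathbb{T})$, which is not weakly compact: concentrating unit mass on shrinking sets destroys uniform integrability, so the image of the unit ball is not relatively weakly compact. Hence Theorem \ref{th:25} does not apply to the lifted family, and the phrase ``weakly compact'' in the statement must be read as its regularity conclusion. The way around this is to transfer condition $(M)$ instead, since condition $(M)$ is a ``uniformity on a fixed neighbourhood'' statement and is stable under $\mathrm{L}^1(\mathbb{T};-)$. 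Concretely, I would set $\mathcal{W}_n=\{f \mid \int_{\mathbb{T}} \mu_{\mathcal{U}_n}(f(\tau))\,d\tau\le 1\}$, with $\mu_{\mathcal{U}_n}$ the Minkowski functional of $\mathcal{U}_n$, and verify the two clauses for $\{\mathcal{W}_n\}$: the inclusion clause is immediate from $i_n(\mathcal{U}_n)\subseteq\mathcal{U}_{n+1}$, while the coincidence-of-topologies clause is obtained by integrating over $\mathbb{T}$ the pointwise coincidence of the base topologies on $\mathcal{U}_n$. I expect this integral estimate---a dominated-convergence and uniform-integrability bookkeeping that crucially uses that the integrand values lie in the fixed bounded set $\mathcal{U}_n$---to be the technical heart of the proof.

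Finally I would deduce regularity and conclude. Because the base is a $(\mathrm{DFS})$-space, it is semi-Montel and its bounded sets are relatively compact and absorbed by a single step; combined with the description $\mathrm{L}^1(\mathbb{T};E)\cong\mathrm{L}^1(\mathbb{T})\widehat{\otimes}_{\pi}E$ of Theorem \ref{th:12}, a bounded subset of $\mathrm{L}^1(\mathbb{T};\mathscr{G}^{\hol,\R}_K)$ is likewise absorbed into some $\mathrm{L}^1(\mathbb{T};\Gamma^{\hol,\R}_{\bdd}(T\overline{U}_m))$ and bounded there, giving regularity of the lifted family. With regularity in hand and condition $(M)$ verified, Theorem \ref{th:24} shows the lifted family is boundedly retractive, which together with the recorded regularity (the intended reading of ``weakly compact'') is precisely the ``moreover'' assertion. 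The condition-$(M)$ transfer of the previous paragraph, with its uniform-integrability estimates, is the step I expect to require the most care; the absorption argument for regularity is the second most delicate point, and everything else is a direct appeal to the cited results.
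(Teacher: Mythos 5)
Your diagnosis of the paper's weak-compactness claim is correct, and it is the most valuable part of your proposal: it identifies a genuine error in the paper's own proof. The lifted connecting map $\mathrm{id}\widehat{\otimes}(r_n\scirc\rho_{\overline{U}_n})$ cannot be weakly compact. Indeed, if $v\in\Gamma^{\hol,\R}_{\bdd}(T\overline{U}_n)$ has nonzero restriction $w$ to $\overline{U}_{n+1}$, the set of simple functions $k\chi_{I_k}v$, with $I_k\subseteq\mathbb{T}$ intervals of length $1/k$, is bounded in $\mathrm{L}^1(\mathbb{T};\Gamma^{\hol,\R}_{\bdd}(T\overline{U}_n))$; choosing $\eta$ in the dual with $\eta(w)=1$, the continuous (hence weakly continuous) map $\mathrm{id}\widehat{\otimes}\eta$ into $\mathrm{L}^1(\mathbb{T})$ would carry the image of this set onto $\{k\chi_{I_k}\}$, which is not relatively weakly compact in $\mathrm{L}^1(\mathbb{T})$ by Dunford--Pettis. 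The paper's argument for weak compactness breaks exactly where one would expect: the displayed estimate $\int_{\mathbb{T}}\xi(t)\,\eta(f_{n_s}(t)-f_{n_r}(t))\,dt\le M\int_{\mathbb{T}}\eta(f_{n_s}(t)-f_{n_r}(t))\,dt$ is invalid because the integrand changes sign, so controlling the single functional $\eta(\int_{\mathbb{T}}(f_{n_s}-f_{n_r})\,dt)$ says nothing about general $\xi\otimes\eta$ (and, separately, weakly Cauchy is conflated there with weakly convergent). So the ``weakly compact'' clause of the statement is false as written, and your plan to salvage regularity and bounded retractivity instead is the right repair. Two further remarks on your condition-$(M)$ transfer, which otherwise parallels the paper's: your $\mathcal{W}_n=\{f\mid\int_{\mathbb{T}}\mu_{\mathcal{U}_n}(f(\tau))\,d\tau\le 1\}$ is a genuine $0$-neighbourhood, unlike the paper's $\mathrm{L}^1(\mathbb{T};V_n)$ (a set of functions constrained pointwise is not a neighbourhood for an integral norm), which is a point in your favour; but your justification of the coincidence-of-topologies clause is off, since elements of $\mathcal{W}_n$ do \emph{not} take values in $\mathcal{U}_n$. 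The correct tool is an Ehrling-type inequality: compactness of the linking maps gives, for $n<j<j'$ and every $\epsilon>0$, a constant $C_\epsilon$ with $p_{\overline{U}_j}(x)\le\epsilon\,p_{\overline{U}_n}(x)+C_\epsilon\,p_{\overline{U}_{j'}}(x)$, and integrating this over $\mathbb{T}$ gives the required statement on $\mathcal{W}_n$.

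The genuine gap is your regularity step, and it is not a minor one: it is the load-bearing point of the whole theorem. Theorem \ref{th:24} needs regularity as a hypothesis; the paper manufactured it (invalidly) from Theorem \ref{th:25}; your replacement is the single assertion that a bounded subset of $\mathrm{L}^1(\mathbb{T};\mathscr{G}^{\hol,\R}_K)\cong\mathrm{L}^1(\mathbb{T})\widehat{\otimes}_{\pi}\mathscr{G}^{\hol,\R}_K$ is ``absorbed into some $\mathrm{L}^1(\mathbb{T};\Gamma^{\hol,\R}_{\bdd}(T\overline{U}_m))$ and bounded there.'' Nothing you cite implies this: regularity of the base together with the isomorphism of Theorem \ref{th:12} does not transfer, because relating bounded sets of a completed $\pi$-tensor product to bounded sets of the factors is precisely Grothendieck's probl\`eme des topologies, which is nontrivial and even false for general Fr\'echet factors (Taskinen). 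A correct argument must use the specific structure here, for instance Grothendieck's theorem that bounded subsets of $E\widehat{\otimes}_{\pi}F$ for $E$, $F$ of type (DF) lie in the closed absolutely convex hull of $A\otimes B$ with $A$, $B$ bounded, combined with base regularity and a Fatou-type argument (exploiting compactness of the linking maps) to show that the closed unit ball of each step is closed in the later steps; alternatively, one can invoke the Neus--Wengenroth theory of (LB)-spaces, in which condition $(M)$ alone already implies bounded retractivity and hence regularity, bypassing Theorem \ref{th:24} entirely. Either route is a substantive piece of mathematics that your proposal (and, for that matter, the paper) does not supply; as written, you have relocated the hole from the weak-compactness step to the regularity step rather than closed it.
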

\begin{proof}
We know that, by Theorem \ref{th:17}, for every $n\in \N$, the map
$\rho^{\R}_{\overline{U}_n}:\Gamma^{\hol,\R}_{\bdd}(\overline{U}_n)\to
\Gamma^{\hol,\R}(\overline{U}_n)$ is a compact continuous map. Note that
every $n\in \N$, the map
$\mathrm{id}\otimes\rho^{\R}_{\overline{U}_n}:\mathrm{L}^1(\mathbb{T})\otimes_{\pi}\Gamma^{\hol,\R}_{\bdd}(\overline{U}_n)\to
\mathrm{L}^1(\mathbb{T})\otimes_{\pi}\Gamma^{\hol,\R}(\overline{U}_n)$
is defined by 
\begin{equation*}
\mathrm{id}\otimes\rho^{\R}_{\overline{U}_n}(\xi(t)\otimes
\eta)=\xi(t)\otimes \rho^{\R}_{\overline{U}_n}(\eta).
\end{equation*}
Since $\mathrm{L}^1(\mathbb{T})\otimes_{\pi}
\Gamma^{\hol,\R}_{\bdd}(\overline{U}_n)$ is a dense subset of
$\mathrm{L}^1(\mathbb{T} ; \Gamma^{\hol,\R}_{\bdd}(\overline{U}_n))$,
one can extend the map $\mathrm{id}\otimes\rho^{\R}_{\overline{U}_n}$ into
the map
$\mathrm{id}\widehat{\otimes}\rho^{\R}_{\overline{U}_n}:\mathrm{L}^1(\mathbb{T}
; \Gamma^{\hol,\R}_{\bdd}(\overline{U}_n))\to \mathrm{L}^1(\mathbb{T}
; \Gamma^{\hol,\R}(\overline{U}_n))$. We show that
$\mathrm{id}\widehat{\otimes}\rho^{\R}_{\overline{U}_n}$ is weakly compact. 

In order to show that $\mathrm{id}\widehat{\otimes}\rho^{\R}_{\overline{U}_n}$
is weakly compact, it suffices to show that for a bounded set $B\subset
\mathrm{L}^1(\mathbb{T};\Gamma^{\hol,\R}_{\bdd}(\overline{U}_n))$, the set
$\mathrm{id}\widehat{\otimes}\rho^{\R}_{\overline{U}_n}(B)$ is relatively
weakly compact in $\mathrm{L}^1(\mathbb{T};\Gamma^{\hol,\R}(\overline{U}_n))$. Since
$\mathrm{L}^1(\mathbb{T};\Gamma^{\hol,\R}(\overline{U}_n))$ is a
complete locally convex space, by Theorem \ref{th:52}, the set 
\begin{equation*}
\mathrm{cl}\left(\mathrm{id}\widehat{\otimes}\rho^{\R}_{\overline{U}_n}(B)\right)
\end{equation*}
is weakly compact if it is
weakly sequentially compact. Therefore, it suffices to show that
$\mathrm{cl}\left(\mathrm{id}\widehat{\otimes}\rho^{\R}_{\overline{U}_n}(B)\right)$ is
weakly sequentially compact. Let $\{f_n\}_{n=1}^{\infty}$ in
$\mathrm{cl}\left(\mathrm{id}\widehat{\otimes}\rho^{\R}_{\overline{U}_n}(B)\right)$.
Since
$\mathrm{cl}\left(\mathrm{id}\widehat{\otimes}\rho^{\R}_{\overline{U}_n}(B)\right)$
is bounded, for every seminorm $p$ on
$\Gamma^{\hol,\R}(\overline{U}_n)$, there exists $M>0$ such that
\begin{equation*}
p(\int_{\mathbb{T}} f_n(\tau)d\tau)\le \int_{\mathbb{T}}
p(f_n(\tau))d\tau\le M.
\end{equation*}
This implies that the sequence $\left\{\int_{\mathbb{T}}
  f_n(\tau)d\tau\right\}_{n=1}^{\infty}$ is bounded in
$\Gamma^{\hol,\R}(\overline{U}_n)$. Since
$\Gamma^{\hol,\R}(\overline{U}_n)$ is a nuclear locally convex space,
the sequence $\left\{\int_{\mathbb{T}}
  f_n(\tau)d\tau\right\}_{n=1}^{\infty}$ is relatively compact in
$\Gamma^{\hol,\R}(\overline{U}_n)$. Therefore, there is a subsequence $\{f_{n_r}\}_{r=1}^{\infty}$ of
$\{f_n\}_{n=1}^{\infty}$ such that
\begin{equation*}
\left\{\int_{\mathbb{T}}
  f_{n_r}(\tau)d\tau\right\}_{r=1}^{\infty}
\end{equation*}
is Cauchy in $\Gamma^{\hol,\R}(\overline{U}_n)$. 

Note that the strong dual of $\mathrm{L}^1(\mathbb{T})$ is
$\mathrm{L}^{\infty}(\mathbb{T})$ \cite[Chapter 8]{royden1988}. We also know that
$\Gamma^{\hol,\R}(\overline{U}_n$ is a nuclear complete metrizable
space and $\mathrm{L}^1(\mathbb{T})$ is a Banach space. Therefore, using \cite[Chapter IV,
Theorem 9.9]{schaefer}, the strong dual of
$\mathrm{L}^1(\mathbb{T};\Gamma^{\hol,\R}(\overline{U}_n)$ is exactly
$\mathrm{L}^{\infty}(\mathbb{T})\widehat{\otimes}_{\pi}
\left(\Gamma^{\hol,\R}(\overline{U}_n)\right)'_{\beta}$. We first show
that, for every $\xi\otimes\eta\in \mathrm{L}^{\infty}(\mathbb{T})\otimes
\left(\Gamma^{\hol,\R}(\overline{U}_n)\right)'$, the sequence 
\begin{equation*}
\left\{\xi\otimes\eta (f_{n_r}) \right\}_{r=1}^{\infty}
\end{equation*}
is Cauchy in $\R$. Note that we have
\begin{multline*}
\xi\otimes\eta (f_{n_r}-f_{n_s})=\int_{\mathbb{T}} \xi(t) \eta
(f_{n_s}(t)-f_{n_r}(t))dt \\\le M
\int_{\mathbb{T}}\eta(f_{n_s}(t)-f_{n_r}(t))dt=M\eta\left(\int_{\mathbb{T}}
  (f_{n_s}(t)-f_{n_r}(t))dt\right).
\end{multline*} 
Since the sequence $\left\{\int_{\mathbb{T}}
  f_{n_r}(\tau)d\tau\right\}_{r=1}^{\infty}$ is Cauchy in
$\Gamma^{\hol,\R}(\overline{U}_n)$, this implies that the sequence
$\left\{\xi\otimes\eta (f_{n_r}) \right\}_{r=1}^{\infty}$ is Cauchy in
$\R$. Now we show that, for every $\lambda\in \mathrm{L}^{\infty}(\mathbb{T})\widehat{\otimes}\left(\Gamma^{\hol,\R}(\overline{U}_n)\right)'$, the sequence 
\begin{equation*}
\left\{\lambda(f_{n_r}) \right\}_{r=1}^{\infty}
\end{equation*}
is Cauchy in $\R$. Note that $\mathrm{L}^{\infty}(\mathbb{T})\otimes_{\pi}
\left(\Gamma^{\hol,\R}(\overline{U}_n)\right)'$ is a dense
subset of $\mathrm{L}^{\infty}(\mathbb{T})\widehat{\otimes}_{\pi}
\left(\Gamma^{\hol,\R}(\overline{U}_n)\right)'_{\beta}$. So there
exist a net $\{\xi_{\alpha}\}_{\alpha\in \Lambda}$ in
$\mathrm{L}^{\infty}(\mathbb{T})$ and a net $\{\eta_{\alpha}\}_{\alpha\in \Lambda}$ in
$\left(\Gamma^{\hol,\R}(\overline{U}_n)\right)'$ such that
\begin{equation*}
\lim_{\alpha} \xi_{\alpha}\otimes \eta_{\alpha}=\lambda.
\end{equation*}
Thus, for every $\epsilon>0$, there exists $\theta\in \Lambda$ such that
\begin{equation*}
\left\|\xi_{\theta}\otimes \eta_{\theta}(v)-\lambda(v)\right\|\le
\frac{\epsilon}{3},\qquad\forall v\in \mathrm{cl}\left(\mathrm{id}\widehat{\otimes}\rho^{\R}_{\overline{U}_n}(B)\right).
\end{equation*}
Since the sequence $\left\{\xi_{\theta}\otimes\eta_{\theta} (f_{n_r})
\right\}_{r=1}^{\infty}$ is Cauchy in $\mathbb{F}$, for every
$\epsilon>0$, there exists
$\tilde{N}>0$ such that
\begin{equation*}
\left\|\xi_N\otimes
  \eta_N(f_{n_s}-f_{n_r})\right\|<\frac{\epsilon}{3},\qquad\forall r,s>\tilde{N}.
\end{equation*}
Thus, for every $\epsilon>0$, there exists $\tilde{N}>0$ such that
\begin{multline*}
\left\|\lambda(f_{n_s}-f_{n_r})\right\|\le
\left\|\lambda(f_{n_s}-f_{n_r})-\xi_{\theta}\otimes \eta_{\theta}
  (f_{n_s}-f_{n_r})\right\|+\left\|\xi_{\theta}\otimes\eta_{\theta}(f_{n_s}-f_{n_r})\right\|\\\le
\left\|\lambda(f_{n_s})-\xi_{\theta}\otimes \eta_{\theta}
  (f_{n_s})\right\|+\left\|\lambda(f_{n_r})-\xi_{\theta}\otimes \eta_{\theta}
  (f_{n_r})\right\|+\left\|\xi_{\theta}\otimes\eta_{\theta}(f_{n_s}-f_{n_r})\right\|<\epsilon.
\end{multline*}
Therefore, the sequence $\{f_{n_r}\}_{r=1}^{\infty}$ is weakly
Cauchy in $\mathrm{L}^1(\mathbb{T};
\Gamma^{\hol,\R}(\overline{U}_n))$. This completes the proof of weak
compactness of the map $\mathrm{id}\widehat{\otimes}
\rho^{\R}_{\overline{U}_n}:\mathrm{L}^1(\mathbb{T};
\Gamma^{\hol,\R}_{\bdd}(\overline{U}_n))\to \mathrm{L}^1(\mathbb{T};
\Gamma^{\hol,\R}(\overline{U}_n))$. Recall that in the proof of
Theorem \ref{th:2}, for every $n\in \N$, we defined
the continuous linear map $r^{\R}_n:\Gamma^{\hol,\R}(\overline{U}_n)\to
\Gamma^{\hol,\R}_{\bdd}(\overline{U}_{n+1})$ by 
\begin{equation*}
r^{\R}_n(X)=X\lvert_{\overline{U}_{n+1}}.
\end{equation*}
Then we have the following diagram:
\begin{equation*}
\xymatrix{ \Gamma^{\hol,\R}_{\bdd}({U}_{n}) \ar[r]^{\rho^{\R}_{{U}_n}}&
  \Gamma^{\hol,\R}({U}_{n})\ar[r]^{r^{\R}_n}&
  \Gamma^{\hol,\R}_{\bdd}({U}_{n+1}).}
\end{equation*}
Therefore, we have the following diagram:
\begin{equation*}
\xymatrix{ \mathrm{L}^1(\mathbb{T};\Gamma^{\hol,\R}_{\bdd}({U}_{n})) \ar[r]^{\mathrm{id}\widehat{\otimes}\rho^{\R}_{{U}_n}}&
  \mathrm{L}^1(\mathbb{T}; \Gamma^{\hol}({U}_{n}))\ar[r]^{\mathrm{id}\widehat{\otimes}r^{\R}_n}&
  \mathrm{L}^1(\mathbb{T}; \Gamma^{\hol}_{\bdd}({U}_{n+1})).}
\end{equation*}
Since, $\mathrm{id}\widehat{\otimes} \rho^{\R}_{\overline{U}_n}$ is
weakly compact, by \cite[\S 17.2, Proposition 1]{HJ:1981}, the composition
$\mathrm{id}\widehat{\otimes} \rho^{\R}_{\overline{U}_n}\scirc
\mathrm{id}\widehat{\otimes} r^{\R}_{\overline{U}_n}$ is weakly
compact. Therefore, the connecting maps in the inductive limit $\varinjlim_{n\to\infty}
\mathrm{L}^1(\mathbb{T};\Gamma^{\hol}_{\bdd}(\overline{U}_n))=\mathrm{L}^1(\mathbb{T};\mathscr{G}^{\hol,\R}_K)$
are weakly compact.

Using Theorem \ref{th:24}, if we can show that
the direct limit satisfies condition ($M$), then it would be boundedly
retractive. Since the inductive limit $\varinjlim_{n\to\infty}
\Gamma^{\hol}_{\bdd}(\overline{U}_n)=\mathscr{G}^{\hol,\R}_K$ is compact, by Theorem \ref{th:25}, it
satisfies condition ($M$). This means that there exists a sequence
$\{V_n\}_{n\in \N}$ such that, for every $n\in \N$, $V_n$ is an absolutely convex
neighbourhood of $0$ in $\Gamma^{\hol}_{\bdd}(\overline{U}_n)$ and there exists $M_n>0$ such that, for every $m>M_n$, the
topologies induced from $\Gamma^{\hol}_{\bdd}(\overline{U}_m)$ on
$V_n$ are all the same. Now consider
the sequence $\{\mathrm{L}^1(\mathbb{T};V_n)\}_{n\in \N}$. It is clear
that, for every $n\in \N$, $\mathrm{L}^1(\mathbb{T};V_n)$ is an
absolutely convex neighbourhood of $0$ in $\mathrm{L}^1(\mathbb{T};\Gamma^{\hol}_{\bdd}(\overline{U}_n))$. For
every seminorm $p$ on $\Gamma^{\hol}_{\bdd}(\overline{U}_n)$ and every $m>M_n$,
there exists a seminorm $q_m$ on $\Gamma^{\hol}_{\bdd}(\overline{U}_m)$ such that 
\begin{equation*}
p(v)\le q_m(v),\qquad\forall v\in V_n.
\end{equation*}
This implies that, for every $X\in \mathrm{L}^1(\mathbb{T};V_n)$, we have
\begin{equation*}
\int_{\mathbb{T}}p(X(\tau))d\tau \le \int_{\mathbb{T}}q_m(X(\tau))d\tau.
\end{equation*}
So, for every $m>M_n$, the topology induced on $\mathrm{L}^1(\mathbb{T};V_n)$ from
$\mathrm{L}^1(\mathbb{T};\Gamma^{\hol}_{\bdd}(\overline{U}_m))$ is the same as
its original topology. Therefore, the inductive limit  
\begin{equation*}
\varinjlim_{n\to\infty}
\mathrm{L}^1(\mathbb{T};\Gamma^{\hol}_{\bdd}(\overline{U}_n))=\mathrm{L}^1(\mathbb{T};\mathscr{G}^{\hol,\R}_K)
\end{equation*}
satisfies condition $(M)$ and it is boundedly retractive.
\end{proof}

Using the local extension theorem developed here, we can state
the following result, which can be considered as generalization of
Corollary \ref{th:11}. 

\begin{corollary}\label{th:13}
Let $B\subseteq \mathrm{L}^1(\mathbb{T};\Gamma^{\omega}(TM))$ be a bounded
set. Then, for every compact set $K\subseteq M$, there exists a
neighbourhood $\overline{U}_K$ of $K$ and a bounded set $\overline{B}\in
\mathrm{L}^1(\mathbb{T};\Gamma^{\hol}_{\bdd}(\overline{U}_n))$ such that, for
every $X\in B$, there exists a $\overline{X}\in \overline{B}$ such
that
\begin{equation*}
\overline{X}(t,x)=X(t,x)\qquad\forall t\in \mathbb{T}, \ \forall x\in K.
\end{equation*}
\end{corollary}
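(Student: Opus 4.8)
The plan is to move the bounded family $B$ from the projective‑limit description of $\Gamma^{\omega}(TM)$ over to the inductive‑limit description produced by Theorem~\ref{th:28}, and then to use the fact that \emph{that} inductive limit is regular in order to trap all of $B$ inside a single Banach‑space–valued $\mathrm{L}^1$‑space. The point is that the projective structure gives a continuous ``germ about $K$'' projection, while the inductive structure, being regular, lets a bounded set be represented at one fixed level $\overline{U}_m$.

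First I would produce the germ projection. Writing $\Gamma^{\omega}(TM)=\varprojlim\mathscr{G}^{\hol,\R}_{K_n}$ for a compact exhaustion $\{K_n\}_{n\in\N}$ with $K\subseteq K_1$, the projective‑limit topology supplies a continuous linear map $\pi_K:\Gamma^{\omega}(TM)\to\mathscr{G}^{\hol,\R}_K$ sending a real analytic vector field to its germ about $K$. Post‑composition with a continuous linear map carries Bochner integrable curves to Bochner integrable curves (it sends approximating nets of simple functions to approximating nets), so
\begin{equation*}
\tilde\pi_K:\mathrm{L}^1(\mathbb{T};\Gamma^{\omega}(TM))\to\mathrm{L}^1(\mathbb{T};\mathscr{G}^{\hol,\R}_K),\qquad \tilde\pi_K(X)(t)=\pi_K(X(t)),
\end{equation*}
is well defined. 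Continuity follows directly from the $\mathrm{L}^1$‑seminorms: for each generating seminorm $q$ on $\mathscr{G}^{\hol,\R}_K$ there is a continuous seminorm $p$ on $\Gamma^{\omega}(TM)$ with $q\scirc\pi_K\le p$, whence $\int_{\mathbb{T}}q(\pi_K(X(\tau)))\,d\tau\le\int_{\mathbb{T}}p(X(\tau))\,d\tau$. Being continuous and linear, $\tilde\pi_K$ maps the bounded set $B$ to a bounded set $\tilde\pi_K(B)\subseteq\mathrm{L}^1(\mathbb{T};\mathscr{G}^{\hol,\R}_K)$.

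Next I would fix, as in Theorem~\ref{th:28}, a sequence $\{\overline{U}_n\}_{n\in\N}$ of open, relatively compact neighbourhoods of $K$ in $M^{\C}$ with $\mathrm{cl}(\overline{U}_{n+1})\subseteq\overline{U}_n$ and $\bigcap_{n\in\N}\overline{U}_n=K$. Theorem~\ref{th:28} then yields
\begin{equation*}
\varinjlim_{n\to\infty}\mathrm{L}^1(\mathbb{T};\Gamma^{\hol}_{\bdd}(\overline{U}_n))=\mathrm{L}^1(\mathbb{T};\mathscr{G}^{\hol,\R}_K)
\end{equation*}
with weakly compact connecting maps, so by Theorem~\ref{th:25} this inductive family is regular. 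Applying regularity to the bounded set $\tilde\pi_K(B)$, there are an index $m\in\N$ and a bounded set $\overline{B}\subseteq\mathrm{L}^1(\mathbb{T};\Gamma^{\hol}_{\bdd}(\overline{U}_m))$ for which the canonical map $g_m$ restricts to a bijection $g_m|_{\overline{B}}:\overline{B}\to\tilde\pi_K(B)$. Setting $\overline{U}_K=\overline{U}_m$, each $X\in B$ has $\tilde\pi_K(X)\in\tilde\pi_K(B)$, hence a unique preimage $\overline{X}\in\overline{B}$ with $g_m(\overline{X})=\tilde\pi_K(X)$. Thus for (almost every) $t$ the bounded holomorphic vector field $\overline{X}(t,\cdot)$ on $\overline{U}_K$ and the real analytic field $X(t,\cdot)$ determine the same germ about $K$, so their representatives agree on $K\subseteq M\cap\overline{U}_K$, giving $\overline{X}(t,x)=X(t,x)$ for all $x\in K$.

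The main obstacle I expect is the bridge between the two topological descriptions: one must confirm that $\pi_K$ is genuinely continuous for the $C^{\omega}$‑topology (this is precisely where the coincidence of the projective and inductive topologies, i.e.\ Martineau's theorem invoked before Theorem~\ref{th:60}, enters) and that lifting to $\mathrm{L}^1$ preserves continuity and boundedness; the seminorm estimate above, or equivalently the identification $\tilde\pi_K=\mathrm{id}\,\widehat{\otimes}_{\pi}\pi_K$ under Theorem~\ref{th:12}, handles this cleanly. A secondary, more delicate point is the passage from ``equal germ about $K$'' to ``equal values on $K$'', together with the measure‑theoretic caveat that the pointwise equality $\overline{X}(t,x)=X(t,x)$ holds for \emph{every} $t$ only after choosing the representatives of the $\mathrm{L}^1$‑classes coherently, exactly the same harmless subtlety already present in Corollary~\ref{th:11}.
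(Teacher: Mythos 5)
Your proposal is correct and follows essentially the same route the paper intends: Corollary~\ref{th:13} is stated as an immediate consequence of Theorem~\ref{th:28}, namely one passes from the bounded set in $\mathrm{L}^1(\mathbb{T};\Gamma^{\omega}(TM))$ to a bounded set of germs in $\mathrm{L}^1(\mathbb{T};\mathscr{G}^{\hol,\R}_K)$ via the (continuous) projective-limit projection and then invokes regularity\slash bounded retractivity of the weakly compact inductive limit to realize that bounded set at a single level $\mathrm{L}^1(\mathbb{T};\Gamma^{\hol}_{\bdd}(\overline{U}_m))$. Your explicit treatment of the germ projection $\tilde\pi_K$ and of the measure-theoretic caveat simply fills in details the paper leaves implicit.
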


Let $M$ be a real analytic manifold and let $U\subseteq M$ be a relatively
compact subset of $M$. Then, by the local extension theorem, for every
$f\in C^{\omega}(M)$, there exists a neighbourhood
$\overline{V}\subseteq M^{\C}$ of $U$ such that $f$ can be extended to
a bounded holomorphic function
$\overline{f}\in C^{\hol}_{\bdd}(\overline{V})$. It is useful to study the relationship between
the seminorms of $f$ and the seminorms of its holomorphic extension
$\overline{f}$. 

\begin{theorem}\label{th:19}
Let $M$ be a real analytic manifold and $U$ be a relatively
compact subset of $M$. Then, for every neighbourhood $\overline{V}\subseteq
M^{\C}$ of $\mathrm{cl}(U)$, there
exists $d>0$ such that, for every $f\in C^{\omega}(M)$ with a
holomorphic extension $\overline{f}\in C_{\bdd}^{\hol}(\overline{V})$, we have
\begin{equation*}
p^{\omega}_{K,\mathbf{a}}(f)\le p_{\overline{V}}(\overline{f}),\qquad\forall \mathbf{a}\in
\mathbf{c}^{\downarrow}_{0}(\Z_{\ge 0},\R_{>0},d),\ \forall \mbox{ compact }K\subseteq U.
\end{equation*}
\end{theorem}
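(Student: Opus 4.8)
The plan is to reduce this to the multivariate Cauchy estimates in a holomorphic chart, choosing $d$ small enough that the resulting combinatorial and geometric factors are all bounded by $1$. Throughout I would work in a coordinate chart $(W,\phi)$ on $M$ containing $K$, together with the induced holomorphic chart on $M^{\C}$ whose coordinates extend $\phi$; in these coordinates $f$ is represented by a real analytic map and $\overline{f}$ by its holomorphic extension, and the real derivatives $D^{(r)}f$ at a real point agree with the complex derivatives $\partial^{(r)}_z\overline{f}$ there, by the Cauchy\textendash Riemann equations.

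First I would fix the geometry. Since $\mathrm{cl}(U)$ is compact and $\overline{V}$ is an open neighbourhood of it in $M^{\C}$, a compactness/finite\textendash subcover argument produces a single $\delta>0$ such that, for every $x\in \mathrm{cl}(U)$, the closed coordinate polydisc of polyradius $(\delta,\dots,\delta)$ centred at $\phi(x)$ is contained in (the coordinate image of) $\overline{V}$. This $\delta$ is uniform over all compact $K\subseteq U$, which is exactly what allows a single $d$ to serve every such $K$. Applying Cauchy's integral formula over the distinguished boundary of this polydisc to $\overline{f}$ then gives, for every multi-index $(r)$ and every $x\in K$,
\[
\left\|D^{(r)}f(x)\right\|=\left\|\partial^{(r)}_z\overline{f}(\phi(x))\right\|\le \frac{(r)!}{\delta^{|r|}}\,p_{\overline{V}}(\overline{f}),
\]
using the bound $\|\overline{f}\|\le p_{\overline{V}}(\overline{f})$ on $\overline{V}$.

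Finally I would feed this into the seminorm. Multiplying by the weight $\frac{a_0a_1\cdots a_{|r|}}{|(r)|!}$ and using the elementary bound $\frac{(r)!}{|(r)|!}\le 1$ (the multinomial coefficient $|(r)|!/(r)!$ is a positive integer) together with $a_j\le d$ for all $j$, I obtain
\[
\frac{a_0a_1\cdots a_{|r|}}{|(r)|!}\left\|D^{(r)}f(x)\right\|\le \frac{d^{|r|+1}}{\delta^{|r|}}\,p_{\overline{V}}(\overline{f})=d\Bigl(\tfrac{d}{\delta}\Bigr)^{|r|}p_{\overline{V}}(\overline{f}).
\]
Choosing $d=\min\{1,\delta\}$ makes both $d\le 1$ and $d/\delta\le 1$, so the prefactor is $\le 1$ for every $(r)$; taking the supremum over $|r|\in\Z_{\ge 0}$ and $x\in K$ yields $p^{\omega}_{K,\mathbf{a}}(f)\le p_{\overline{V}}(\overline{f})$ for every $\mathbf{a}\in \mathbf{c}^{\downarrow}_{0}(\Z_{\ge 0},\R_{>0},d)$. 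The only genuinely delicate point is the first step\textemdash producing a polyradius $\delta$ uniform over $\mathrm{cl}(U)$ and reconciling the chart\textendash dependent definition of $p^{\omega}_{K,\mathbf{a}}$ with the Cauchy estimate in complexified coordinates (including the identification of real and complex derivatives). The remaining estimates are routine.
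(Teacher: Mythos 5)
Your proof is correct and follows essentially the same route as the paper's: compactness of $\mathrm{cl}(U)$ yields a uniform polydisc radius inside $\overline{V}$, Cauchy's estimates bound the derivatives of $f$ by $p_{\overline{V}}(\overline{f})$, and the condition $a_j\le d$ absorbs the combinatorial factors before taking suprema over $(r)$ and $x\in K$. If anything you are more careful than the paper, which leaves an unabsorbed factor of $a_0\le d$ (it never imposes $d\le 1$) and conflates $(r)!$ with $|(r)|!$; your choice $d=\min\{1,\delta\}$ and the multinomial bound $(r)!\le |(r)|!$ tidy up both points.
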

\begin{proof}
Since $\overline{f}$ is a holomorphic extension of $f$, we have
\begin{equation*}
\overline{f}(x)=f(x),\qquad\forall x\in \mathrm{cl}(U).
\end{equation*}
Since $\mathrm{cl}(U)$ is compact, one can choose $d>0$ such that, for
every $x\in \mathrm{cl}(U)$, we have $D_{(d)}(x)\subseteq
\overline{V}$, where $(d)=(d,d,\ldots,d)$. We set $D=\bigcup_{x\in U}
D_{(d)}(x)$. Then we have $D\subseteq \overline{V}$. 
Using Cauchy's estimate, for every multi-index $(r)$ and for every
$\mathbf{a}\in \mathbf{c}^{\downarrow}_{0}(\Z_{\ge 0},\R_{>0}, d)$, we have
\begin{equation*}
\frac{a_0a_1\ldots a_{|r|}}{(r)!}\|D^{(r)}f(x)\|\le
\frac{a_0}{d}\frac{a_1}{d}\ldots \frac{a_{|r|}}{d}\sup\left\{\|\overline{f}(x)\|\mid
x\in D\right\} \le p_{\overline{V}}(\overline{f}),\qquad\forall x\in U.
\end{equation*}
This implies that, for every compact set $K\subseteq U$ and every
$\mathbf{a}\in \mathbf{c}^{\downarrow}_{0}(\Z_{\ge 0},\R_{>0}, d)$, we have
\begin{equation*}
p^{\omega}_{K,\mathbf{a}}(f)\le p_{\overline{V}}(\overline{f}).
\end{equation*}
\end{proof}

\section{Series representation of flows of time-varying real analytic vector fields}\label{sec:10}

In this section, using the holomorphic extension theorems, we study
flows of time-varying real analytic vector fields. The operator framework that
we use for this analysis (as to our knowledge) has been first
introduced in \cite{agrachev1978exponential}.
As mentioned in the previous sections, a time-varying $C^{\omega}$-vector field can
be considered as a curve on the locally convex space
$\LC{M}{M}{\omega}$. Let $X:\mathbb{T}\times M\to TM$ be a
time-varying real analytic vector field. Then we define
$\widehat{X}:\mathbb{T}\to \mathrm{L}(C^{\omega}(M);C^{\omega}(M))$ as
\begin{equation*}
\widehat{X}(t)(f)=df(X(t)),\qquad\forall t\in \mathbb{T}, \ \forall
f\in C^{\omega}(M)
\end{equation*}
Following the analysis in \cite{agrachev1978exponential}, the flow of 
a time-varying $C^{\omega}$-vector field $X$ can be considered as a
curve $\zeta:\mathbb{T}\to \LC{M}{U}{\omega}$ which satisfies the
following initial value problem on the locally convex space $\LC{M}{U}{\omega}$:
\begin{eqnarray}\label{eq:1}
\begin{split}
\frac{d\zeta}{dt}(t)&=\zeta(t)\scirc \widehat{X}(t),\qquad a.e.\
                       t\in\mathbb{T}\\
\zeta(0)&=\mathrm{id}.
\end{split}
\end{eqnarray}
Therefore, one can reduce the problem of studying the flow of a time-varying vector
field to the problem of studying solutions of a linear differential equation on a
locally convex vector space.
The theory of ordinary differential equations on locally convex spaces is
different in nature from the classical theory of ordinary differential
equations on Banach spaces. In the
theory of differential equations on Banach spaces, there are many
general results about existence, uniqueness and properties of the
flows of vector fields, which hold independently of the underlying
Banach space. However, the
theory of ordinary differential equations on locally convex spaces
heavily depends on the nature of their underlying space. Many methods in
the classical theory of ordinary differential equations in Banach spaces have no counterpart
in the theory of ordinary differential equations on locally convex
spaces \cite{SGL-GOS:1994}. For instance, one can easily find
counterexamples for Peano's existence theorem for linear differential
equations on locally convex spaces \cite{SGL-GOS:1994}.

In \cite{agrachev1978exponential}, the initial value problem
\eqref{eq:1} for both time-varying smooth vector fields
and time-varying real analytic vector fields
has been studied on $\mathrm{L}(C^{\infty}(\R^n);C^{\infty}(\R^n))$. In
the real analytic case, $X$ is assumed to be a locally
integrally bounded time-varying $C^{\omega}$-vector field on $\R^n$ such that it
can be extended to a bounded holomorphic vector field on a
neighbourhood  $\Omega\subseteq \C^n$ of $\R^n$. Using the $C^{\hol}$-topology on the
space of holomorphic vector fields, it has
been shown that the well-known sequence of Picard iterations for the
initial value problem \eqref{eq:1} converges and gives us the unique solution of
\eqref{eq:1} \cite[\S 2, Proposition 2.1]{agrachev1978exponential}. In
the smooth case, the existence and uniqueness of solutions of \eqref{eq:1} has
been shown. However, for smooth but
not real analytic vector fields, the sequence of Picard iterations
associated to the initial value problem \eqref{eq:1} does not converge
\cite[\S 2.4.4]{agrachevcontrol2004}. 

In this section, we study the initial value problem
\eqref{eq:1} for the real analytic cases on the locally convex space $\mathrm{L}(C^{\omega}(M);C^{\omega}(M))$. Using the local
extension theorem \eqref{th:11} and estimates for seminorms on the space of real analytic
functions, we provide a direct method for proving and studying the
convergence of sequence of Picard iterations. This method helps
us to generalize the result of \cite[\S 2, Proposition 2.1]{agrachev1978exponential} to
arbitrary locally integrally bounded time-varying real analytic vector fields. 

\begin{theorem}\label{th:37}
Let $X:\mathbb{T}\to \Gamma^{\omega}(TM)$ be
a locally integrally bounded time-varying vector field. Then, for
every $t_0\in\mathbb{T}$ and every $x_0\in M$, there exists an interval
$\mathbb{T}'\subseteq\mathbb{T}$ containing $t_0$ and an open set
$U\subseteq M$ containing $x_0$ such that there exists a unique
locally absolutely continuous curve $\zeta:\mathbb{T}'\to
\mathrm{L}(C^{\omega}(M);C^{\omega}(U))$ which satisfies the following
initial value problem:
\begin{eqnarray}\label{eq:2}
\begin{split}
\frac{d\zeta}{dt}(t)&=\zeta(t)\scirc \widehat{X}(t),\qquad a.e.\
                       t\in\mathbb{T}',\\
\zeta(t_0)&=\mathrm{id},
\end{split}
\end{eqnarray}
and, for every $t\in\mathbb{T}'$, we have
\begin{equation}\label{eq:102}
\zeta(t)(fg)=\zeta(t)(f)\zeta(t)(g),\qquad\forall f,g\in C^{\omega}(M).
\end{equation}
\end{theorem}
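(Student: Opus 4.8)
The plan is to construct $\zeta$ as the limit of the sequence of Picard iterations associated with \eqref{eq:2} and to read off every asserted property from the estimates that force this sequence to converge. Concretely, I set $\zeta_0(t)=\mathrm{id}$ and $\zeta_{k+1}(t)(f)=f+\int_{t_0}^{t}\zeta_k(\tau)(\widehat{X}(\tau)(f))\,d\tau$; unwinding the recursion exhibits $\zeta_k(t)$ as a partial sum of the chronological series whose $k$-th term, applied to $f\in C^{\omega}(M)$, is the iterated integral $\int_{\Delta_k(t)}\widehat{X}(\tau_k)\scirc\cdots\scirc\widehat{X}(\tau_1)(f)\,d\tau$ over the simplex $\Delta_k(t)=\{t_0\le\tau_k\le\cdots\le\tau_1\le t\}$. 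Since each $\widehat{X}(\tau)$ is a derivation of $C^{\omega}(M)$ (Theorem \ref{th:9}), every term lies in $C^{\omega}(M)$ and every $\zeta_k(t)$ is linear, so it remains to prove convergence in the $C^{\omega}$-topology of $\mathrm{L}(C^{\omega}(M);C^{\omega}(U))$.

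First I would fix the geometric data: choose $U$ relatively compact inside a coordinate chart around $x_0$, and use Corollary \ref{th:11} to extend $X$ to a bounded holomorphic $\overline{X}$ on a complex neighbourhood $\overline{V}$ of $\mathrm{cl}(U)$, with $g(\tau):=\max_i p_{\overline{V}}(\overline{X}^i(\tau))$ locally integrable in $\tau$. Let $d>0$ be furnished by Theorem \ref{th:19} for this $\overline{V}$, and put $d'=d/(6\textup{e})$. The core of the argument is the seminorm estimate: for $\mathbf{a}\in\mathbf{c}^{\downarrow}_{0}(\Z_{\ge 0},\R_{>0},d')$, repeatedly applying Theorem \ref{th:54} to peel off the $k$ copies of the vector field, advancing the index of the generating sequence from $0$ to $k$, gives
\begin{equation*}
p^{\omega}_{K,\mathbf{a}_0}\!\left(\widehat{X}(\tau_k)\scirc\cdots\scirc\widehat{X}(\tau_1)(f)\right)\le (4N)^k k!\left(\prod_{j=1}^{k}\max_i p^{\omega}_{K,\mathbf{b}_{k-j}}(X^i(\tau_j))\right) p^{\omega}_{K,\mathbf{a}_k}(f).
\end{equation*}
Here Lemma \ref{lem:4} bounds $\mathbf{a}_k$ and $\mathbf{b}_n$ pointwise by $\textup{e}\mathbf{a}$ and $6\textup{e}\mathbf{a}$, so both sequences lie in $\mathbf{c}^{\downarrow}_{0}(\Z_{\ge 0},\R_{>0},d)$; Theorem \ref{th:19} then replaces $p^{\omega}_{K,\mathbf{b}_{n}}(X^i(\tau_j))$ by $g(\tau_j)$ and $p^{\omega}_{K,\mathbf{a}_k}(f)$ by $p_{\overline{V}}(\overline{f})$, both independently of $k$. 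Integrating over $\Delta_k(t)$, the symmetric integrand contributes a factor $\frac{1}{k!}\big(\int_{t_0}^{t}g\big)^k$, the factorials cancel, and the $k$-th term is bounded by $(4NC)^k p_{\overline{V}}(\overline{f})$ with $C=\int_{\mathbb{T}'}g$. Choosing $\mathbb{T}'\ni t_0$ small enough that $4NC<1$, which is possible by absolute continuity of the integral of $g$, yields a convergent geometric majorant that is uniform over all $\mathbf{a}$ of bound $\le d'$.

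It remains to pass from these bound-$\le d'$ seminorms to the full $C^{\omega}$-topology: any $p^{\omega}_{K,\mathbf{a}^{\ast}}$ is dominated, up to a constant depending only on the finitely many indices at which $a^{\ast}_m>d'$, by some $p^{\omega}_{K,\mathbf{a}}$ with $\mathbf{a}$ of bound $\le d'$, so the restricted family is cofinal and convergence in it is convergence in the $C^{\omega}$-topology. Completeness of $C^{\omega}(U)$ (Theorem \ref{th:60}) then produces a limit $\zeta(t)\in\mathrm{L}(C^{\omega}(M);C^{\omega}(U))$, and the same majorant shows the integrand $\tau\mapsto\zeta(\tau)\scirc\widehat{X}(\tau)$ is locally Bochner integrable, whence $\zeta$ is absolutely continuous and solves \eqref{eq:2}. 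For uniqueness I would feed the identical peeling estimate into the difference $\delta=\zeta_1-\zeta_2$ of two solutions: iterating the homogeneous integral equation $\delta(t)(f)=\int_{t_0}^{t}\delta(\tau)(\widehat{X}(\tau)(f))\,d\tau$ and invoking the uniform operator bound on each solution produces a Gronwall inequality forcing $\delta\equiv0$. Finally, the algebra property \eqref{eq:102} follows by uniqueness applied to the defect $D(t)(f,g)=\zeta(t)(fg)-\zeta(t)(f)\zeta(t)(g)$, which, because each $\widehat{X}(t)$ is a derivation, satisfies the same linear homogeneous equation with $D(t_0)=0$ and hence vanishes identically. I expect the delicate point to be exactly the factorial cancellation combined with the use of the holomorphic extension (Theorem \ref{th:19}) to make the geometric ratio uniform across seminorms, since it is this uniformity that lets a single small interval $\mathbb{T}'$ control convergence in the entire $C^{\omega}$-topology.
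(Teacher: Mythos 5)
Your existence argument is, at its core, the same as the paper's: Picard iteration, a bounded holomorphic extension of $X$ near $\mathrm{cl}(U)$, domination of the real analytic seminorms by $p_{\overline{V}}$ via Theorem \ref{th:19}, and the derivation estimate of Theorem \ref{th:54} with the shifted sequences $\mathbf{a}_n,\mathbf{b}_n$ of Lemma \ref{lem:4}. The only organizational difference is that you unroll the recursion and play the factor $k!$ (from iterating the factor $(n+1)$ in Theorem \ref{th:54}) against the volume $1/k!$ of the simplex $\Delta_k(t)$, whereas the paper performs the identical cancellation one step at a time inside an induction, via $\int_{t_0}^t (n+1)\,m(\tau)M(\tau)^n\,d\tau = M(t)^{n+1}$; the estimates are the same, and your iterated bound with the indices $\mathbf{b}_{k-j}$ is exactly what the induction produces. (You invoke Corollary \ref{th:11} plus restriction to a relatively compact $\overline{V}$ where the paper invokes Corollary \ref{th:13}; for a single vector field both work.) Two of your additions actually improve on the paper's write-up: the explicit cofinality argument showing that seminorms $p^{\omega}_{K,\mathbf{a},f}$ with $\mathbf{a}$ bounded by $d'$ already generate the topology (the paper silently works only with such $\mathbf{a}$), and the remark that the geometric majorant is uniform in $\mathbf{a}$.

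The gap is in uniqueness, and hence in the multiplicative property \eqref{eq:102}, which you derive from uniqueness. (For what it is worth, the paper's written proof never addresses either claim, so here you are attempting more than the paper does --- but the sketch does not close the hole.) You iterate the homogeneous equation for $\delta=\zeta_1-\zeta_2$ and invoke ``the uniform operator bound on each solution'' to get a Gronwall inequality. For the constructed solution such bounds exist by construction; for an arbitrary competing solution $\zeta_2$ all you know is that it is locally absolutely continuous into $\mathrm{L}(C^{\omega}(M);C^{\omega}(U))$ with the topology of pointwise convergence. That gives, for each \emph{fixed} $g$, boundedness of $\{\zeta_2(\tau)(g):\tau\in[t_0,T]\}$, but in your iteration the argument $g=\widehat{X}(\tau_k)\scirc\cdots\scirc\widehat{X}(\tau_1)(f)$ runs over an infinite family, so you need a bound $p^{\omega}_{K,\mathbf{a}}\bigl(\zeta_2(\tau)(g)\bigr)\le C\,q(g)$ uniform in $\tau$ and $g$, i.e.\ equicontinuity of $\{\zeta_2(\tau)\}$. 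That is not free in this setting: it requires something like Banach--Steinhaus, hence barrelledness of $C^{\omega}(M)$, together with the fact that $q$ may then be taken among the generating seminorms so that the peeling estimate applies to it. Without such an ingredient the Gronwall step does not go through; recall that uniqueness for linear ODEs genuinely fails in general locally convex spaces, so some specific property of this space must enter. The same issue infects the algebra property: your defect satisfies $\dot D(t)(f,g)=D(t)(\widehat{X}(t)f,g)+D(t)(f,\widehat{X}(t)g)$, a homogeneous linear equation on a space of bilinear maps (after justifying the product rule for absolutely continuous curves in the algebra $C^{\omega}(U)$), and concluding $D\equiv 0$ needs the uniqueness machinery in that setting, with the same missing equicontinuity input.
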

\begin{proof}
Let $N=\mathrm{dim}(M)$ and $(V,(x^1,x^2,\ldots,x^N))$ be a coordinate chart around
$x_0$. Without loss of generality, we can assume that $\mathbb{T}$ is
a compact interval containing $t_0$.
Let $U$ be a relatively compact set such that
$\mathrm{cl}(U)\subseteq V$, $K\subseteq U$ be a compact set. For every $k\in\N$, we define
$\phi_k: \mathbb{T}\to \mathrm{L}(C^{\omega}(M);C^{\omega}(U))$ inductively as
\begin{eqnarray*}
\phi_0(t)(f)&=&f\mid_U,\qquad\forall t\in [t_0,T],\\
\phi_k(t)(f)&=&f\mid_{U}+\int_{t_0}^{t} \phi_{k-1}(\tau)\scirc \widehat{X}(\tau)(f)d\tau,\qquad\forall t\in \mathbb{T}.
\end{eqnarray*}
Let $K\subseteq M$ be a compact set and $\mathbf{a}\in \mathbf{c}^{\downarrow}_{0}(\Z_{\ge
  0},\R_{>0},6\textup{e}d)$. Then, we have the following lemma.
\begin{lemma*}
There exist a locally
integrally bounded function $m\in \mathrm{L}^1_{\loc}(\mathbb{T})$ such that, for every
$f\in C^{\omega}(M)$, there exist constants $M_f, \tilde{M}_f\in
\R^{\ge 0}$ 
\begin{eqnarray*}
p^{\omega}_{K,\mathbf{a},f}(\phi_n(t)-\phi_{n-1}(t))&\le&
                                                 (M(t))^nM_f,\qquad\forall
                                                 t\in \mathbb{T},\
                                                 \forall n\in \N.\\
p^{\omega}_{K,\mathbf{a},f}\left((\phi_n(t)-\phi_{n-1}(t))\scirc \widehat{X}(t)\right)&\le&
                                                 m(t)(M(t))^n\tilde{M}_f,\qquad\forall
                                                 t\in \mathbb{T},\ \forall n\in \N.
\end{eqnarray*}
where $M:\mathbb{T}\to \R$ is defined as 
\begin{equation*}
M(t)=\left|\int_{t_0}^{t}m(\tau)d\tau\right|,\qquad\forall t\in \mathbb{T}.
\end{equation*}
\end{lemma*}
\begin{proof}
Since $X$ is locally Bochner integrable, by Corollary \ref{th:13}, there exist a neighbourhood
$\overline{V}$ of $U$, a locally Bochner integrable vector field
$\overline{X}\in
\mathrm{L}^1(\mathbb{T};\Gamma_{\bdd}^{\hol,\R}(\overline{V}))$,
and a function $\overline{f}\in C_{\bdd}^{\hol,\R}(\overline{V})$
such that $\overline{X}_t$ and $\overline{f}$ are the holomorphic extension
of $X$ and $f$ over $\overline{V}$, respectively. Then, by Theorem
\ref{th:19}, there exists $d>0$ such that, for every compact set
$K\subseteq U$ and every $\mathbf{a}\in \mathbf{c}^{\downarrow}_{0}(\Z_{\ge
  0},\R_{>0},6\textup{e}d)$, we have
\begin{eqnarray*}
p^{\omega}_{K,\mathbf{a}}(f)&\le& p_{\overline{V}}(\overline{f}),\\
\max_i\left\{p^{\omega}_{K,\mathbf{a}}(X^i(t))\right\}&\le&
                                              \max_i\left\{p_{\overline{V}}(\overline{X}(t))\right\},\qquad\forall t\in \mathbb{T}, 
\end{eqnarray*}
Since $X$ is locally Bochner integrable, there exists $m\in
\mathrm{L}^1(\mathbb{T})$ such that
\begin{equation*}
4N\max_i\left\{p_{\overline{V}}(\overline{X}^i(t))\right\}\le m(t),\qquad\forall
t\in \mathbb{T},
\end{equation*}
Then we define $M:\mathbb{T}\to \R$ as
\begin{equation*}
M(t)=\int_{t_0}^{t} m(\tau)d\tau. 
\end{equation*}
Let $K\subseteq U$ be a compact set and let $\mathbf{a}\in
\mathbf{c}^{\downarrow}_{0}(\Z_{\ge 0},\R_{>0},d)$. We show by
induction that, for every $n\in N$, the function $\phi_{n}\scirc X$
is locally Bochner integrable and $\phi_{n+1}\in
\mathrm{AC}(\mathbb{T},\mathrm{L}(C^{\omega}(M);C^{\omega}(U)))$. Moreover, we have
\begin{equation*}
p^{\omega}_{K,\mathbf{a},f}(\phi_{n+1}(t)-\phi_n(t))\le
(M(t))^{n+1}p^{\omega}_{K,\mathbf{a}_{n+1}}(f),\qquad\forall
t\in \mathbb{T}, 
\end{equation*}
where, for every $n\in \N$, the sequence
$\mathbf{a}_n\in \mathbf{c}^{\downarrow}_{0}(\Z_{\ge 0},\R_{>0})$ is defined as in
Lemma \ref{lem:4}:
\begin{equation*}
a_{n,m}=
\begin{cases}
\left(\frac{m+1}{m}\right)^na_m& n<m,\\
\left(\frac{m+1}{m}\right)^ma_m& n\ge m.
\end{cases}
\end{equation*}
First note that for $n=0$, we have
\begin{equation*}
\phi_0\scirc \widehat{X}(f)=\widehat{X}(f)\mid_U,\qquad\forall f\in C^{\omega}(M),
\end{equation*}
Since $X$ is locally Bochner integrable, $\phi_0\scirc X$ is locally Bochner
integrable. Therefore, $\phi_1\in
\mathrm{AC}([t_0,T],\mathrm{L}(C^{\omega}(M);C^{\omega}(U)))$. Moreover,
we have
\begin{equation*}
\phi_{1}(t)-\phi_0(t)=\int_{t_0}^{t} \widehat{X}(\tau)d\tau,\qquad\forall t\in \mathbb{T}.
\end{equation*}
This implies that 
\begin{equation*}
p^{\omega}_{K,\mathbf{a},f}(\phi_{1}(t)-\phi_0(t))\le
\int_{t_0}^{t}
p^{\omega}_{K,\mathbf{a}}(\widehat{X}(\tau)f)d\tau,\qquad\forall t\in \mathbb{T}.
\end{equation*}
By inequality \eqref{eq:6}, we have 
\begin{equation*}
p^{\omega}_{K,\mathbf{a}}(X(t)f)\le 4N \max_i\{p^{\omega}_{K,\mathbf{b}_1}(X^i(t))\}p^{\omega}_{K,\mathbf{a}_1}(f),\qquad\forall
t\in \mathbb{T}.
\end{equation*}
Therefore we have
\begin{multline*}
p^{\omega}_{K,\mathbf{a},f}(\phi_{1}(t)-\phi_0(t))\le \int_{t_0}^{t}
4N\max_i\{p^{\omega}_{K,\mathbf{b}_1}(X^i(\tau))\}p^{\omega}_{K,\mathbf{a}_1}(f)d\tau
\\\le M(t)p^{\omega}_{K,\mathbf{a}_1}(f).
\end{multline*}
Now suppose that, for every $k\in \{1,2,\ldots,n-1\}$, $\phi_k\scirc X$
is locally Bochner integrable and we have
\begin{equation*}
p^{\omega}_{K,\mathbf{a},f}(\phi_{k+1}(t)-\phi_k(t))\le
(M(t))^{k+1}p^{\omega}_{K,\mathbf{a}_{k+1}}(f),\qquad\forall t\in\mathbb{T}.
\end{equation*}
Now consider the following inequality:
\begin{multline*}
p^{\omega}_{K,\mathbf{a},f}(\phi_{n-1}(t)\scirc \widehat{X}(t))\le p^{\omega}_{K,\mathbf{a},f}(\widehat{X}(t))+\sum_{i=1}^{n-1}
p^{\omega}_{K,\mathbf{a},f}((\phi_i(t)-\phi_{i-1}(t))\scirc \widehat{X}(t))\\\le
p^{\omega}_{K,\mathbf{a},f}(\widehat{X}(t))+\sum_{i=1}^{n-1} m(t)(M(t))^{i+1}\tilde{M}_f\le
m(t)\left(\sum_{i=0}^{n-1}
  (M(t))^i\right)\tilde{M}_f,\quad\forall t\in \mathbb{T}.
\end{multline*}
The function $g_n:[t_0,T]\to \R$ defined as 
\begin{equation*}
g_n(t)=m(t)\left(\sum_{i=0}^{n-1}M^i(t)\right),\qquad\forall t\in \mathbb{T},
\end{equation*}
is locally integrable. Thus, by Theorem \ref{th:3}, $\phi_{n-1}\scirc \widehat{X}$ is locally Bochner
integrable. So, by Definition \ref{def:2}, $\phi_n$ is absolutely continuous.

On the other hand, we have
\begin{equation*}
\phi_{n+1}(t)-\phi_n(t)=\int_{t_0}^{t} \left(\phi_{n}(\tau)\scirc
\widehat{X}(\tau)-\phi_{n-1}(\tau)\scirc \widehat{X}(\tau)\right)d\tau,\qquad\forall t\in \mathbb{T}.
\end{equation*}
Taking $p^{\omega}_{K,\mathbf{a},f}$ of both side of the above
equality, we have
\begin{multline*}
p^{\omega}_{K,\mathbf{a},f}(\phi_{n+1}(t)-\phi_n(t))\\\le \int_{t_0}^{t} p^{\omega}_{K,\mathbf{a},f}\left((\phi_{n}(\tau)-\phi_{n-1}(\tau))\scirc \widehat{X}(\tau) \right) d\tau,\qquad\forall t\in \mathbb{T}.
\end{multline*}
However, we know that by the induction hypothesis
\begin{equation*}
p^{\omega}_{K,\mathbf{a},f}\left((\phi_{n}(t)-\phi_{n-1}(t))\scirc
  \widehat{X}(t) \right)\le (M(t))^np^{\omega}_{K,\mathbf{a}_n}(\widehat{X}(t)f) ,\qquad\forall t\in \mathbb{T}.
\end{equation*}
Moreover, by the inequality \eqref{eq:6}, we have 
\begin{equation*}
p^{\omega}_{K,\mathbf{a}_{n}}(\widehat{X}(t)f)\le
4N(n+1)\max_i\left\{p^{\omega}_{K,\mathbf{b}_n}(X^i(t))\right\}
p^{\omega}_{K,\mathbf{a}_{n+1}}(f),\qquad\forall t\in \mathbb{T}.
\end{equation*}
By Lemma \ref{lem:4}, for every $n\in \N$, we have $\mathbf{b}_n\in
\mathbf{c}^{\downarrow}_{0}(\Z_{\ge 0},\R_{>0},6\textup{e}d)$. This implies that, for every
$n\in \N$, we
have
\begin{equation*}
\max_i\left\{p^{\omega}_{K,\mathbf{b}_n}(X^i(t))\right\}\le
\max_i\left\{p_{\overline{V}}(\overline{X}^i(t))\right\}<\frac{1}{4N}m(t),\quad\forall
t\in \mathbb{T}.
\end{equation*}
Therefore, for every $n\in \N$, we have
\begin{equation*}
p^{\omega}_{K,\mathbf{a},f}\left((\phi_{n}(t)-\phi_{n-1}(t))\scirc
  \widehat{X}(t)\right)\le (n+1)m(t)M^n(t)p^{\omega}_{K,\mathbf{a}_{n+1}}(f).
\end{equation*}
Thus we get
\begin{multline*}
p^{\omega}_{K,\mathbf{a},f}(\phi_{n+1}(t)-\phi_n(t))\\\le
\int_{t_0}^{t} (n+1)(M(\tau))^{n}m(\tau)
p^{\omega}_{K,\mathbf{a}_{n+1}}(f) d\tau\\=
(M(t))^{n+1}p^{\omega}_{K,\mathbf{a}_{n+1}}(f), \quad\forall t\in \mathbb{T}.
\end{multline*}
This completes the induction.
Note that by Lemma \ref{lem:4}, for every $m,n\in \Z_{\ge 0}$, we have
\begin{equation*}
a_{n,m}\le \textup{e}a_m\le 6\textup{e}d
\end{equation*}
This implies that, for every $n\in \N$, we have
\begin{equation*}
p^{\omega}_{K,\mathbf{a}_n}(f)\le p_{\overline{V}}(\overline{f}).
\end{equation*}
If we set $M_f=p_{\overline{V}}(\overline{f})$ then, for every $n\in \N$, we have
\begin{equation*}
p^{\omega}_{K,\mathbf{a},f}(\phi_{n+1}(t)-\phi_n(t))\le (M(t))^{n+1}M_f, \quad\forall t\in\mathbb{T}.
\end{equation*}
Moreover, for every $n\in\N$, we have
\begin{equation*}
p^{\omega}_{K,\mathbf{a},f}\left((\phi_{n}(t)-\phi_{n-1}(t))\scirc
  \widehat{X}(t)\right)\le (M(t))^np^{\omega}_{K,\mathbf{a}_n}(\widehat{X}(t)f), \quad\forall t\in\mathbb{T}.
\end{equation*}
However, by inequality \eqref{eq:6}, we have
\begin{equation*}
p^{\omega}_{K,\mathbf{a}^n}(\widehat{X}(t)f)\le
4N\max_{i}\left\{p^{\omega}_{K,\mathbf{b}_n}\right\}p^{\omega}_{K,\mathbf{a}_{n+1}}(f),\qquad\forall
t\in \mathbb{T}.
\end{equation*}
Noting that we have 
\begin{equation*}
\max_i\left\{p^{\omega}_{K,\mathbf{b}_n}(X^i(t))\right\}\le
\max_i\left\{p_{\overline{V}}(\overline{X}^i(t))\right\}<\frac{1}{4N}m(t),\quad\forall
t\in \mathbb{T},
\end{equation*}
and
\begin{equation*}
p^{\omega}_{K,\mathbf{a}_{n+1}}(f)\le p_{\overline{V}}(\overline{f}),\quad\forall
t\in \mathbb{T}.
\end{equation*}
Therefore, if we set
$\tilde{M}_f=p_{\overline{V}}(\overline{f})$, we have
\begin{equation*}
p^{\omega}_{K,\mathbf{a},f}\left((\phi_{n}(t)-\phi_{n-1}(t))\scirc
  \widehat{X}(t)\right)\le m(t)(M(t))^n\tilde{M}_f,\qquad\forall
t\in \mathbb{T}.
\end{equation*}
This completes the proof of the lemma.
\end{proof}

Therefore, for every $n\in \N$, we have 
\begin{equation*}
p^{\omega}_{K,\mathbf{a},f}(\phi_n(t)-\phi_{n-1}(t))\le (M(T))^nM_f,\qquad\forall t\in [t_0,T].
\end{equation*}
Since $M$ is continuous, there exists $T\in\mathbb{T}$ such that 
\begin{equation*}
M(t)<1,\qquad\forall t\in [t_0,T].
\end{equation*}
Since $M(T)<1$, one can deduce that the sequence $\{\phi_{n}\}_{n\in \N}$ converges
uniformly on $[t_0,T]$ in $\LC{M}{U}{\omega}$. Since uniform convergence
implies $L^1$-convergence and the space $\mathrm{L}^1([t_0,T];\LC{M}{U}{\omega})$
is complete, there exists $\phi\in \mathrm{L}^1([t_0,T];\LC{M}{U}{\omega})$ such
that 
\begin{equation*}
\lim_{n\to\infty} \phi_n=\phi,
\end{equation*}
where the limit is in $\mathrm{L}^1$-topology on
$\mathrm{L}^1([t_0,T];\LC{M}{U}{\omega})$. We first show that
$\phi\scirc X$ is locally Bochner integrable on $[t_0,T]$.
Note that, by the above Lemma, for every $n\in \N$, we have
\begin{equation}\label{eq:9}
p^{\omega}_{K,\mathbf{a},f}\left(\phi(t)-\phi_n(t)\right)\le
\sum_{k=n+1}^{\infty} (M(t))^kM_f.
\end{equation}
This implies that, for every $n\in \N$,
\begin{multline*}
\int_{t_0}^{t}p^{\omega}_{K,\mathbf{a},f}\left((\phi(\tau)-\phi_n(\tau))\scirc
  \widehat{X}(\tau)\right)d\tau \le \int_{t_0}^{t}\sum_{k=n+1}^{\infty}
m(\tau)(M(\tau))^k\tilde{M}_f \\\le N(T-t_0)\sum_{i=n+1}^{\infty} (M(T))^n\tilde{M}_f,\qquad\forall t\in [t_0,T].
\end{multline*}
Therefore, we get 
\begin{multline*}
\int_{t_0}^{t}p^{\omega}_{K,\mathbf{a},f}\left(\phi(\tau)\scirc
  \widehat{X}(\tau)\right)d\tau \\\le \int_{t_0}^{t}p^{\omega}_{K,\mathbf{a},f}\left(\phi_n(\tau)\scirc
  \widehat{X}(\tau)\right)d\tau + \frac{\tilde{M}_f N(T-t_0)(M(T))^{n+1}}{1-M(T)},\qquad\forall t\in [t_0,T].
\end{multline*}
However, from the proof of the above Lemma, we know that
\begin{equation*}
\int_{t_0}^{t}p^{\omega}_{K,\mathbf{a},f}\left(\phi_n(\tau)\scirc
  \widehat{X}(\tau)\right)d\tau\le g_n(t)\tilde{M}_f, \qquad\forall n\in \N,\
\forall t\in [t_0,T],
\end{equation*}
where $g_n:[t_0,T]\to \R$ is locally integrable. Therefore, we define
the function $h_n:[t_0,T]\to \R$ as
\begin{equation*}
h_n(t)=g_n(t)\tilde{M}_f+\frac{\tilde{M}_f
  N(T-t_0)(M(T))^{n+1}}{1-M(T)},\qquad\forall t\in [t_0,T].
\end{equation*}
It is clear that $h_n$ is locally integrable and 
\begin{equation*}
\int_{t_0}^{t}p^{\omega}_{K,\mathbf{a},f}\left(\phi(\tau)\scirc
  \widehat{X}(\tau)\right)d\tau\le h_n(t).
\end{equation*}
This implies that $\phi\scirc \widehat{X}$ is locally Bochner
integrable. Moreover, using equation \eqref{eq:9}, we get 
\begin{equation*}
\lim_{n\to\infty} \int_{t_0}^{t} \phi_n(\tau)\scirc
\widehat{X}(\tau) d\tau=\int_{t_0}^{t}\phi(\tau)\scirc \widehat{X}(\tau)
d\tau,\qquad\forall t\in [t_0,T].
\end{equation*}
Therefore, we have
\begin{equation*}
\phi(t)=\lim_{n\to\infty}\phi_n(t)=\lim_{n\to\infty} \int_{t_0}^{t}
\phi_{n-1}(\tau)\scirc \widehat{X}(\tau)d\tau=\int_{t_0}^{t}\phi(\tau)\scirc
\widehat{X}(\tau) d\tau.
\end{equation*}
This shows that $\phi$ satisfies the initial value problem \eqref{eq:2}.

One can also show that
the sequence $\{\phi_n\}_{n\in\N}$ converges to $\phi$ in
$\mathrm{AC}([t_0,T];\LC{M}{U}{\omega})$. In order to show this, it suffices to show that,
for every compact set $K\subseteq U$ and every $f\in C^{\omega}(M)$, we have
\begin{equation*}
\lim_{n\to\infty} \int_{t_0}^{t}p^{\omega}_{K,\mathbf{a},f}\left(\frac{d\phi_{n+1}}{dt}-\frac{d\phi_{n}}{dt}\right)=0,\quad\forall
t\in [t_0,T].
\end{equation*}
Note that, for every $n\in \N$, we have 
\begin{equation*}
\frac{d\phi_{n+1}}{dt}=\phi_n(t)\scirc \widehat{X}(t),\qquad\mbox{ a.e., } t\in [t_0,T].
\end{equation*}
Therefore, it suffices to show that
\begin{equation*}
\lim_{n\to\infty} \int_{t_0}^{t}p^{\omega}_{K,\mathbf{a},f}(\phi_n(t)\scirc
\widehat{X}(t)-\phi_{n-1}(t)\scirc \widehat{X}(t))=0,\qquad\forall t\in [t_0,T].
\end{equation*}
But we know that, for every $n\in \N$, we have
\begin{multline*}
p^{\omega}_{K,\mathbf{a},f}(\phi_n(t)\scirc \widehat{X}(t)-\phi_{n-1}(t)\scirc \widehat{X}(t))\le\\
m(t)(M(t))^n\tilde{M}_f\le m(t)(M(t))^n\tilde{M}_f,\qquad \forall t\in [t_0,T].
\end{multline*}
So we have
\begin{multline*}
\int_{t_0}^{t}p^{\omega}_{K,\mathbf{a},f}(\phi_n(t)\scirc
\widehat{X}(t)-\phi_{n-1}(t)\scirc \widehat{X}(t))\le \frac{d}{(n+1)N}(M(T))^{n+1}\tilde{M}_f\\\le \frac{d}{(n+1)N}(M(T))^{n+1}\tilde{M}_f.
\end{multline*}
This complete the proof of convergence of $\{\phi_n\}_{n\in\N}$ in $\mathrm{AC}([t_0,T];\LC{M}{U}{\omega})$.
\end{proof}

Using Theorem \ref{th:1} and the multiplicative property \eqref{eq:102} of the solution of
the initial value problem \eqref{eq:2}, one
can show that the solution $\phi$ constructed in Theorem \ref{th:37}
is the flow of the time-varying real analytic vector field $X$.  

\begin{corollary}
Let $X:\mathbb{T}\times M\to TM$ be a locally integrally bounded real
analytic vector field. Let $t_0\in \mathbb{T}$, $x_0\in M$, and $\phi^X:\mathbb{T}'\times U\to M$ be the flow of
$X$ defined on a time interval $\mathbb{T}'\subseteq\mathbb{T}$ containing
$t_0$ and a state neighbourhood $U\subseteq M$ containing $x_0$. We
know that $\phi^X$ satisfies
the following initial value problem for every $x\in U$.
\begin{eqnarray}\label{eq:101}
\begin{split}
&\dot{\phi}^X(t,x)=X(t,\phi^X(t,x)),\quad \mbox{
  a.e. }t\in \mathbb{T}',\\
&\phi^X(t_0,x)=x.
\end{split}
\end{eqnarray}
Then there exists a positive real number $T\in \mathbb{T}'$ such that
$T>t_0$ and a neighbourhood $V$ of $x_0$ such that, for every $t\in
[t_0,T]$ and every $x\in V$, we have
\begin{multline*}
f(\phi^X(t,x))=f(x)\\+\sum_{i=1}^{\infty}
\int_{t_0}^{t}\int_{t_0}^{t_1}\ldots \int_{t_0}^{t_{i-1}} \widehat{X}(t_i)\scirc
\widehat{X}(t_{i-1})\scirc\ldots\scirc \widehat{X}(t_1)(f)(x)dt_idt_{i-1}\ldots dt_1.
\end{multline*}
\end{corollary}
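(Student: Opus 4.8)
The plan is to show that the operator solution $\zeta$ produced in Theorem \ref{th:37} is precisely pullback along the flow, $\zeta(t)(f)=f\scirc\phi^X(t,\cdot)$, and then to read off the series from the explicit form of the Picard iterates. First I would record what the iterates $\phi_n$ from the proof of Theorem \ref{th:37} compute. An induction on $n$ shows that
\[
\phi_n(t)(f) = f\mid_U + \sum_{i=1}^{n} \int_{t_0}^{t}\int_{t_0}^{t_1}\cdots\int_{t_0}^{t_{i-1}} \widehat{X}(t_i)\scirc\cdots\scirc\widehat{X}(t_1)(f)\, dt_i\cdots dt_1,
\]
the inductive step being the observation that inserting the formula for $\phi_{n-1}(\tau)$ applied to $\widehat{X}(\tau)(f)$ into the recursion $\phi_n(t)(f) = f\mid_U + \int_{t_0}^{t}\phi_{n-1}(\tau)\scirc\widehat{X}(\tau)(f)\,d\tau$ promotes the time $\tau$ to the outermost (largest) variable of an $(i+1)$-fold simplex integral, matching the composition order in the statement. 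Since Theorem \ref{th:37} gives $\phi_n(t)\to\zeta(t)$ (uniformly on $[t_0,T]$, hence in each seminorm $p^{\omega}_{K,\mathbf{a},f}$), passing to the limit yields $\zeta(t)(f)$ equal to $f$ plus the full series, for $t\in[t_0,T]$.

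It then remains to identify $\zeta(t)(f)$ with $f\scirc\phi^X(t,\cdot)$. By the multiplicative property \eqref{eq:102}, together with linearity and the fact that $\zeta(t)(\mathbf{1})=\mathbf{1}$ (which holds because $\widehat{X}(t)(\mathbf{1})=d\mathbf{1}(X(t))=0$, so every iterate fixes $\mathbf{1}$), the operator $\zeta(t)\colon C^{\omega}(M)\to C^{\omega}(U)$ is a unital $\R$-algebra homomorphism. Hence Theorem \ref{th:1} supplies a real analytic map $\psi_t\colon U\to M$ with $\zeta(t)(f)=f\scirc\psi_t$ for all $f$; equivalently $\mathrm{ev}_x\scirc\zeta(t)=\mathrm{ev}_{\psi_t(x)}$ for every $x\in U$ by Theorem \ref{th:10}. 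Writing $\psi(t,x)=\psi_t(x)$, the initial condition $\zeta(t_0)=\mathrm{id}$ forces $\psi(t_0,x)=x$.

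Next I would verify that $\psi$ solves the same Carathéodory problem \eqref{eq:101} as $\phi^X$. For fixed $x$ the curve $t\mapsto\psi(t,x)$ is absolutely continuous (apply $\mathrm{ev}_x\scirc\zeta(t)$ to the globally extended coordinate functions of the Grauert embedding), so it is differentiable for almost every $t$; differentiating $f(\psi(t,x))=\zeta(t)(f)(x)$ and using $\tfrac{d\zeta}{dt}(t)=\zeta(t)\scirc\widehat{X}(t)$ gives, for almost every $t$,
\[
df_{\psi(t,x)}\!\left(\tfrac{\partial\psi}{\partial t}(t,x)\right) = \zeta(t)\big(\widehat{X}(t)(f)\big)(x) = \big(\widehat{X}(t)(f)\big)(\psi(t,x)) = df_{\psi(t,x)}\big(X(t,\psi(t,x))\big).
\]
Because the differentials $\{df_p\mid f\in C^{\omega}(M)\}$ span $T_p^{*}M$ at every $p$, this identity of covectors forces $\tfrac{\partial\psi}{\partial t}(t,x)=X(t,\psi(t,x))$. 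Thus $t\mapsto\psi(t,x)$ and $t\mapsto\phi^X(t,x)$ satisfy \eqref{eq:101} with the same initial data, and uniqueness of Carathéodory solutions yields $\psi(t,x)=\phi^X(t,x)$ on a common domain $[t_0,T]\times V$ with $V$ a neighbourhood of $x_0$. Substituting $\zeta(t)(f)(x)=f(\phi^X(t,x))$ into the series of the first step gives the stated formula.

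I expect the main obstacle to be this identification rather than the bookkeeping of the iterates. The delicate points are (i) justifying the differentiation of the pointwise relation $f(\psi(t,x))=\zeta(t)(f)(x)$ in $t$ and commuting the $t$-derivative past the evaluation $\mathrm{ev}_x$, which rests on $\zeta$ being absolutely continuous with derivative in $C^{\omega}(U)$; (ii) stripping the common factor $df_{\psi(t,x)}$ off both sides, which needs the separating property of real analytic differentials; and (iii) reconciling the interval $[t_0,T]$ from Theorem \ref{th:37} with the domain of the geometric flow so that the uniqueness theorem applies for each fixed $x$ (the exceptional null set of times may depend on $x$, but uniqueness is invoked pointwise in the initial condition).
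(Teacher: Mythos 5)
Your proposal follows essentially the same route as the paper's own proof: read the series off the Picard iterates of Theorem \ref{th:37}, use the multiplicative property \eqref{eq:102} together with Theorem \ref{th:1} to realize the operator solution as pullback along a real analytic map, then show that map satisfies \eqref{eq:101} and invoke uniqueness of Carath\'eodory solutions to identify it with $\phi^X$. The details you flag as delicate (unitality of $\zeta(t)$, stripping off $df_{\psi(t,x)}$, and reconciling domains) are exactly the points the paper passes over implicitly, so your write-up is, if anything, a more careful version of the same argument.
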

\begin{proof}
By Theorem \ref{th:37}, there exist $T>0$, a neighbourhood $V\subseteq
U$ of $x_0$, and a locally absolutely continuous curve $\xi:[t_0,T]\to
\mathrm{L}(C^{\omega}(M);C^{\omega}(U))$ such that
\begin{equation}\label{eq:100}
\xi(t)(fg)=\xi(t)(f)\xi(t)(g),\quad\ \mbox{ a.e. }t\in [t_0,T].
\end{equation}
and, for every $t\in [t_0,T]$ and every $x\in V$, we have
\begin{multline*}
\xi(t)(f)(x)=f(x)\\+\sum_{i=1}^{\infty} \int_{t_0}^{t}\int_{t_0}^{t_1}\ldots \int_{t_0}^{t_{i-1}} \widehat{X}(t_i)\scirc
\widehat{X}(t_{i-1})\scirc\ldots\scirc \widehat{X}(t_1)(f)(x)dt_idt_{i-1}\ldots dt_1.
\end{multline*}
Since $\xi$ satisfies equation \eqref{eq:100}, by Theorem \ref{th:1}, there exists a map $\phi:[t_0,T]\times V\to M$ such
that
\begin{equation*}
\widehat{\phi}(t)=\xi(t),\qquad\ \mbox{ a.e. }t\in [t_0,T].
\end{equation*}
This implies that, for almost every $t\in [t_0,T]$ and every $x\in V$, we have 
\begin{multline*}
f(\phi(t,x))=\xi(t)(f)(x)\\=f(x)+\sum_{i=1}^{\infty} \int_{t_0}^{t}\int_{t_0}^{t_1}\ldots \int_{t_0}^{t_{i-1}} \widehat{X}(t_i)\scirc
\widehat{X}(t_{i-1})\scirc\ldots\scirc \widehat{X}(t_1)(f)(x)dt_idt_{i-1}\ldots dt_1.
\end{multline*}
Therefore, by the uniqueness of the solution of the differential
equation \ref{eq:101},
it suffice to show that $\phi$ satisfies differential equations
\eqref{eq:101}. Note that, for every $t\in [t_0,T]$, we have
\begin{equation*}
\frac{d\widehat{\phi}(t)}{dt}=\lim_{h\to\infty}\frac{\widehat{\phi}(t+h)-\widehat{\phi}(t)}{h}.
\end{equation*}
By applying $f\in C^{\omega}(M)$ and noting that the topology on
$\mathrm{L}(C^{\omega}(M);C^{\omega}(V))$ is topology of pointwise
convergence, for almost every $t\in [t_0,T]$ and every $x\in V$, we have
\begin{multline*}
\frac{d\widehat{\phi}(t)}{dt}(f)(x)=\lim_{h\to\infty}\frac{\widehat{\phi}(t+h)(f)-\widehat{\phi}(t)(f)}{h}(x)\\=\lim_{h\to\infty}\frac{f(\phi(t+h,x))-f(\phi(t,x))}{h}=\frac{d(f(\phi(t,x)))}{dt},\qquad\forall
f\in C^{\omega}(M).
\end{multline*}
On the other hand, for almost every $t\in [t_0,T]$ and every $x\in V$, we have
\begin{equation*}
\frac{d\widehat{\phi}(t)}{dt}(f)(x)=\widehat{\phi}(t)\scirc
\widehat{X}(t)(f)(x)=X(t,\phi(t,x))(f),\qquad\forall f\in C^{\omega}(M).
\end{equation*}
Therefore, we have
\begin{equation*}
\frac{d(f(\phi(t,x)))}{dt}=X(t,\phi(t,x))(f),\qquad\forall f\in
C^{\omega}(M),\ \mbox{ a.e. }t\in [t_0,T], \ \forall x\in V.
\end{equation*}
This implies that
\begin{equation*}
\dot{\phi}(t,x)=X(t,\phi(t,x)),\ \mbox{ a.e. }t\in [t_0,T], \ \forall x\in V.
\end{equation*}
\end{proof}

\section{The exponential map}\label{sec:11}

In this section, we study the relationship between locally
integrally bounded time-varying real analytic vector fields and their
flows. In order to define such a map connecting time-varying vector
fields and their flows, one should note that there may not exist a fixed interval
$\mathbb{T}\subseteq\R$ containing $t_0$ and a fixed open
neighbourhood $U\subseteq M$ of $x_0$, such that the flow of ``every''
locally integrally bounded time-varying vector field $X\in
\mathrm{L}^1(\R, \Gamma^{\omega}(TM))$ is defined on time
interval $\mathbb{T}$ and on neighbourhood $U$. The following example
shows this for a family of real analytic vector fields.
\begin{example}\label{ex:3}
Consider the family of vector fields $\{X_n\}_{n\in \N}$, where
$X_n:\R\times \R\to T\R\simeq \R^2$ is defined as 
\begin{equation*}
X_n(t,x)=(x,nx^2),\qquad\forall t\in \mathbb{T}, \ \forall x\in \R.
\end{equation*}
Let $\mathbb{T}=[-1,1]$. Then, for every $n\in \N$, the flow of $X_n$
is defined as
\begin{equation*}
\phi^{X_n}(t,x)=\frac{x}{1-nxt}.
\end{equation*}
This implies that $\phi^{X_n}$ is only defined for $x\in
[-\frac{1}{n}, \frac{1}{n}]$. Therefore, there does not exist an open
neighbourhood $U$ of $0$ such that, for every $n\in \N$, $\phi^{X_n}$ is
defined on $U$.
\end{example}
The above example suggest that it is natural to define the connection
between vector fields and their flows on their germs around $t_0$ and $x_0$. Let $\mathbb{T}\subseteq\R$ be a compact interval containing $t_0\in \R$ and $U\subseteq M$
be an open set containing $x_0\in M$. We define 
\begin{equation*}
\LL{\omega}=\varinjlim \mathrm{L}^1(\mathbb{T}; \Gamma^{\omega}(TM)),
\end{equation*}
and
\begin{equation*}
\AC{\omega}=\varinjlim \ACC{T}{M}{U}{\omega}.
\end{equation*}
These direct limits are in the category of topological spaces. We define the exponential map $\exp: \LL{\omega} \to \AC{\omega}$ as
\begin{equation*}
\exp([X]_{(t_0,x_0)})=[\phi^X]_{(t_0,x_0)},\qquad\forall [X]_{(t_0,x_0)}\in \LL{\omega}.
\end{equation*}
\begin{theorem}\label{th:8}
The exponential map is sequentially continuous.
\end{theorem}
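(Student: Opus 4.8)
The plan is to translate the statement about germs into an estimate on a single fixed domain, and then push the bounded family through the uniform Picard bounds established in Theorem \ref{th:37}. Suppose $[X_k]_{(t_0,x_0)}\to[X]_{(t_0,x_0)}$ in $\LL{\omega}$. Since $\LL{\omega}$ is the colimit (in topological spaces) of the spaces $\mathrm{L}^1(\mathbb{T};\Gamma^{\omega}(TM))$ along restriction maps, a convergent sequence may be represented on a single compact interval $\mathbb{T}$ on which $X_k\to X$ in $\mathrm{L}^1(\mathbb{T};\Gamma^{\omega}(TM))$. A convergent sequence is bounded, so $B=\{X_k\}_{k\in\N}\cup\{X\}$ is a bounded subset of $\mathrm{L}^1(\mathbb{T};\Gamma^{\omega}(TM))$. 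Fixing a coordinate chart and a compact neighbourhood $K$ of $x_0$, Corollary \ref{th:13} yields a complex neighbourhood $\overline{V}$ of $K$ and a bounded set $\overline{B}\subseteq\mathrm{L}^1(\mathbb{T};\Gamma^{\hol}_{\bdd}(\overline{V}))$ containing holomorphic extensions $\overline{X}_k,\overline{X}$ of the $X_k$ and $X$. Because the inductive limit presenting the germ space over $K$ is boundedly retractive (Theorem \ref{th:28}) and the canonical map $\Gamma^{\omega}(TM)\to\mathscr{G}^{\hol,\R}_K$ is continuous, I would argue that convergence transfers to the extensions, giving $\int_{\mathbb{T}}p_{\overline{V}}(\overline{X}_k(\tau)-\overline{X}(\tau))\,d\tau\to 0$.

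Next I would extract uniformity from the bound. Since $\overline{B}$ is bounded, there is a single $m\in\mathrm{L}^1(\mathbb{T})$ with $4N\max_i p_{\overline{V}}(\overline{Y}^i(t))\le m(t)$ for every $\overline{Y}\in\overline{B}$, and, setting $M(t)=\int_{t_0}^{t}m(\tau)\,d\tau$, a common $T>t_0$ with $M(T)<1$. This single pair $(m,T)$ then serves all $X_k$ at once, so by the construction in Theorem \ref{th:37} every flow $\phi^{X_k}$ and $\phi^X$ exists on $[t_0,T]\times U$ for a common $U$, and the Picard iterates $\phi_n^k$ obey, uniformly in $k$, the estimates
\[
p^{\omega}_{K,\mathbf{a},f}(\phi_n^k(t)-\phi_{n-1}^k(t))\le (M(t))^{n}M_f,
\]
together with the companion bound on $(\phi_n^k(t)-\phi_{n-1}^k(t))\scirc\widehat{X}_k(t)$ by $m(t)(M(t))^{n}\tilde{M}_f$, where $M_f=\tilde{M}_f=\sup_{\overline{Y}\in\overline{B}}p_{\overline{V}}(\overline{f})$ via Theorem \ref{th:19}. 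Summing the geometric tails gives, uniformly in $k$,
\[
p^{\omega}_{K,\mathbf{a},f}(\phi^{X_k}(t)-\phi_N^k(t))\le\frac{(M(T))^{N+1}}{1-M(T)}M_f,
\]
and an analogous uniform $\mathrm{L}^1$-bound for the associated derivatives, which controls the $q$-seminorm part of the $\ACC{T}{M}{U}{\omega}$-topology.

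It then remains to handle each fixed truncation. For fixed $N$ the iterate $\phi_N^k$ is built from $\overline{X}_k$ by finitely many integrations and compositions, so I would prove $\phi_N^k\to\phi_N$ by induction on $n$, using the splitting $\phi_{n-1}^k\scirc\widehat{X}_k-\phi_{n-1}\scirc\widehat{X}=(\phi_{n-1}^k-\phi_{n-1})\scirc\widehat{X}_k+\phi_{n-1}\scirc(\widehat{X}_k-\widehat{X})$ and the product estimate \eqref{eq:6} to bound the composition seminorms: the first summand is small because $\phi_{n-1}^k\to\phi_{n-1}$ and $\widehat{X}_k$ is uniformly bounded, the second because $\widehat{X}_k\to\widehat{X}$ in $\mathrm{L}^1$ and $\phi_{n-1}$ is fixed. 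Combining the convergence of each $\phi_N^k$ with the uniform tail bounds through the triangle inequality yields $\phi^{X_k}\to\phi^X$ in $\ACC{T}{M}{U}{\omega}$, hence $[\phi^{X_k}]_{(t_0,x_0)}\to[\phi^X]_{(t_0,x_0)}$ in $\AC{\omega}$, which is exactly sequential continuity of $\exp$.

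The \emph{main obstacle} is the derivative (i.e.\ $q$-seminorm) part of the $\mathrm{AC}$-convergence: one must show $\phi^{X_k}\scirc\widehat{X}_k\to\phi^X\scirc\widehat{X}$ in $\mathrm{L}^1$, not merely pointwise convergence of the flows. This is where the uniform bound on $(\phi_n^k-\phi_{n-1}^k)\scirc\widehat{X}_k$ and the product inequality \eqref{eq:6} are essential, since they let me control a composition of two simultaneously varying operators rather than a single one; the interchange of the infinite Picard sum with the limit in $k$ is legitimised precisely by the uniform-in-$k$ tail estimate obtained from the boundedness of $\overline{B}$.
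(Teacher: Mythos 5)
Your route is, in substance, the paper's own: represent the convergent sequence on a single compact interval, use Corollary \ref{th:13}/Theorem \ref{th:28} and bounded retractivity to transfer the convergence to holomorphic extensions $\overline{X}_k\to\overline{X}$ in $\mathrm{L}^1(\mathbb{T};\Gamma^{\hol,\R}_{\bdd}(\overline{V}))$, then run the Picard machinery with the bilinear splitting $(\phi^k_{n-1}-\phi_{n-1})\scirc\widehat{X}_k+\phi_{n-1}\scirc(\widehat{X}_k-\widehat{X})$, which is exactly the paper's splitting with the roles of the two factors interchanged. The only organizational difference is how the double limit is interchanged: you use an $\epsilon/3$ argument (uniform-in-$k$ geometric tails plus convergence of each fixed truncation), while the paper proves a single bound, uniform in the iteration index $n$, namely $p^{\omega}_{K,\mathbf{a},f}(\phi^X_n(t)-\phi^{X_m}_n(t))\le h(t)\int_{t_0}^{t}\max_i\{p_{\overline{V}}(\overline{X}^i(\tau)-\overline{X}^i_m(\tau))\}d\tau$ with $h$ the sum of the convergent series $\sum_{r\ge 0}(r+1)(\tilde{m}(t))^{r}p^{\omega}_{K,\mathbf{a}_{r+1}}(f)$, and then lets $n\to\infty$. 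These are the same idea with different bookkeeping; if anything you are more explicit than the paper about the derivative ($q$-seminorm) part of the $\mathrm{AC}$ topology.

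However, one step of your proposal is false as stated, and it is the step on which all your uniform estimates rest: boundedness of $\overline{B}$ in $\mathrm{L}^1(\mathbb{T};\Gamma^{\hol}_{\bdd}(\overline{V}))$ does \emph{not} yield a single $m\in \mathrm{L}^1(\mathbb{T})$ with $4N\max_i p_{\overline{V}}(\overline{Y}^i(t))\le m(t)$ for all $\overline{Y}\in\overline{B}$, nor a common $T$ with $M(T)<1$. Bounded subsets of $\mathrm{L}^1$ admit no common integrable envelope and need not be uniformly integrable: take $\overline{Y}_k(t)=k\,\chi_{[t_0,t_0+1/k]}(t)\,v$ with $p_{\overline{V}}(v)=1$; this family is bounded in $\mathrm{L}^1$, any common envelope would have to dominate $1/(t-t_0)$ near $t_0$, and $\int_{t_0}^{T}p_{\overline{V}}(\overline{Y}_k(\tau))\,d\tau=1$ for every $T>t_0$ and all large $k$, so no common contraction interval exists for such a family. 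What rescues the argument is that you have \emph{convergence}, not mere boundedness: $\mathrm{L}^1$-convergent sequences are uniformly integrable, so taking an integrable envelope $g$ of the limit field and setting $m_k(t)=4N\left(g(t)+p_{\overline{V}}(\overline{X}_k(t)-\overline{X}(t))\right)$, the integrals $\int_{t_0}^{T}m_k(\tau)\,d\tau$ become uniformly small for all large $k$ once $T$ is close to $t_0$, and the finitely many remaining indices are handled by shrinking $T$ further. This is in substance what the paper does: it anchors the envelope to the limit field and bounds the tails of the sequence by $g+\epsilon$ for $m>C$ (the paper states this as a pointwise bound deduced from $\mathrm{L}^1$ smallness, which is itself imprecise, but the ingredient it invokes, convergence to $X$, is the correct one). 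So replace ``bounded, hence a common envelope'' by ``convergent, hence uniformly integrable'' in that step, and keep the envelopes $k$-dependent with uniformly small integrals; with that repair your proof goes through.
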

\begin{proof}

To show that $\exp:\LL{\omega} \to \AC{\omega}$ is a sequentially continuous map, it suffices to prove that, for
every sequence $\{X_n\}_{n\in\N}$ in
$\mathrm{L}^1(\mathbb{T};\Gamma^{\omega}(TM))$ which converges to
$X\in \mathrm{L}^1(\mathbb{T};\Gamma^{\omega}(TM))$,
the sequence $\{[\phi^{X_n}]_{(t_0,x_0)}\}$ converges to
$[\phi^X]_{(t_0,x_0)}$ in $\mathrm{AC}^{\omega}_{(t_0,x_0)}$. Since the sequence $\{X_n\}_{n\in \N}$ is converging, it is bounded in
$\mathrm{L}^1(\mathbb{T};\Gamma^{\omega}(TM))$. So, by Theorem
\ref{th:37}, there exists $T>t_0$ and a relatively compact coordinate neighbourhood $U$ of $x_0$ such that $[t_0,T]\subseteq
\mathbb{T}$ and, for every $n\in\N$, we have $\phi^{X_n}\in
\mathrm{AC}([t_0,T]; \LC{M}{U}{\omega})$. Therefore, it suffices to show that, for
the sequence $\{X_n\}_{n\in\N}$ in
$\mathrm{L}^1(\mathbb{T};\Gamma^{\omega}(TM))$ converging to $X\in
\mathrm{L}^1(\mathbb{T};\Gamma^{\omega}(TM))$,
the sequence $\{\phi^{X_n}\}$ converges to
$\phi^X$ in $\mathrm{AC}([t_0,T]; \LC{M}{U}{\omega})$. 

Let $f\in C^{\omega}(M)$ be a real analytic function and suppose that we have
\begin{equation*} 
\lim_{m\to\infty} X_m=X
\end{equation*}
in $\mathrm{L}^1(\mathbb{T};\Gamma^{\omega}(U))$. By Theorems \ref{th:21} and \ref{th:28}, there
exists a neighbourhood $\overline{V}\subseteq M^{\C}$ of $U$ such that
the bounded sequence of locally integrally bounded real analytic vector
fields $\{X_m\}_{m\in\N}$, the real analytic vector field $X$,
and the real analytic function $f$ can be extended to a converging
sequence of locally integrally bounded holomorphic vector fields
$\{\overline{X}_m\}_{m\in \N}$, a locally integrally bounded holomorphic vector field $X$, and a holomorphic function
$\overline{f}$ respectively. Moreover, by Theorem \ref{th:28}, the
inductive limit 
\begin{equation*}
\varinjlim
\mathrm{L}^1(\mathbb{T};\Gamma^{\hol,\R}_{\bdd}(\overline{U}_n))=\mathrm{L}^1(\mathbb{T};\Gamma^{\omega}(TM))
\end{equation*}
 is boundedly retractive. Therefore, we have
\begin{equation*}
\lim_{m\to\infty} \overline{X}_m=\overline{X}
\end{equation*}
in $\mathrm{L}^1(\mathbb{T}; \Gamma^{\hol,\R}_{\bdd}(\overline{V}))$.
Now, according to Theorem \ref{th:19}, there exists
$d>0$, such that for every compact set $K\subseteq U$, every
$\mathbf{a}\in \mathbf{c}^{\downarrow}_{0}(\Z_{\ge 0},\R_{>0}, d)$, and every $t\in \mathbb{T}$, we have 
\begin{eqnarray*}
p^{\omega}_{K,\mathbf{a}}(f)&\le&
                                  p_{\overline{V}}(\overline{f}),\\
\max_i\left\{p^{\omega}_{K,\mathbf{a}}(X^i(t))\right\}&\le&
                                  \max_i\left\{p_{\overline{V}}(\overline{X}^i(t))\right\},\\
\max_i\left\{p^{\omega}_{K,\mathbf{a}}(X^i(t)-X^i_m(t))\right\}&\le& \max_{i}\left\{p_{\overline{V}}(\overline{X}^i(t)-\overline{X}^i_m(t))\right\}.
\end{eqnarray*} 
Since $\overline{X}$ is locally integrally bounded, there exists $g\in
\mathrm{L}^1(\mathbb{T})$ such that 
\begin{equation*}
\max_i\left\{p_{\overline{V}}(\overline{X}^i(t))\right\}<g(t),\qquad\forall
t\in \mathbb{T}.
\end{equation*}
This implies that, for every compact set $K\subseteq U$ and every
$\mathbf{a}\in \mathbf{c}^{\downarrow}_{0}(\Z_{\ge 0},\R_{>0}, d)$, we have 
\begin{equation*}
\max_i\left\{p^{\omega}_{K,\mathbf{a}}(X^i(t))\right\}\le
\max_i\left\{p_{\overline{V}}(\overline{X}^i(t))\right\}<g(t),\quad\forall
t\in \mathbb{T}.
\end{equation*}
This means that, for every $\epsilon>0$, there exists $C\in \N$ such that 
\begin{equation*}
\int_{t_0}^{t}\max_i\left\{p_{\overline{V}}(\overline{X}^i_m(\tau)-\overline{X}^i(\tau))\right\}d\tau<\epsilon,\qquad\forall
m>C, \ t\in \mathbb{T}.
\end{equation*}
Therefore, if $m>C$, we have
\begin{equation*}
\max_i\left\{p_{\overline{V}}(\overline{X}^i_m(t))\right\}\le \max_i\left\{p_{\overline{V}}(\overline{X}^i(t))\right\}+
\epsilon\le g(t)+\epsilon,\qquad\forall t\in \mathbb{T},\ \forall m>C.
\end{equation*}
We define $m\in \mathrm{L}^1(\mathbb{T})$ as 
\begin{equation*}
m(t)=g(t)+\epsilon,\qquad\forall t\in\mathbb{T}.
\end{equation*}
We also define $\tilde{m}\in C(\mathbb{T})$ as 
\begin{equation*}
\tilde{m}(t)=\int_{t_0}^{t} (4N)m(\tau)d\tau,\qquad\forall
t\in \mathbb{T}.
\end{equation*}
We choose $T>t_0$ such that $|\tilde{m}(T)|<\frac{1}{2}$. 

\begin{lemma*}
Let $K\subseteq U$ be a compact set and
$\mathbf{a}\in \mathbf{c}^{\downarrow}_{0}(\Z_{\ge
  0},\R_{>0},d)$. Then, for every $n\in \N$, we have
\begin{multline*}
p^{\omega}_{K,\mathbf{a},f}(\phi^X_n(t)-\phi^{X_m}_n(t))\le
\left(\sum_{r=0}^{n-1}
(r+1)(\tilde{m}(t))^{r}p^{\omega}_{K,\mathbf{a}_{r+1}}(f)\right)\times\\\int_{t_0}^{t}
\max_i\left\{p_{\overline{V}}(\overline{X}^i(\tau)-\overline{X}^i_m(\tau))\right\}d\tau,\quad\forall
t\in [t_0,T], \ \forall m>C,
\end{multline*}
where $\mathbf{a}_{k}$ is as defined in Lemma \ref{lem:4}.
\end{lemma*}
\begin{proof}
We prove this lemma using induction on $n\in \N$. We first check the
case $n=1$. For $n=1$, using Theorem \ref{th:54}, we have
\begin{multline*}
p^{\omega}_{K,\mathbf{a},f}(\phi^X_1(t)-\phi^{X_m}_1(t))=p^{\omega}_{K,\mathbf{a},f}\left(\int_{t_0}^{t}
\widehat{X}(\tau)-\widehat{X}_m(\tau)d\tau\right)\\\le \int_{t_0}^{t}
p^{\omega}_{K,\mathbf{a},f}
\left(\widehat{X}(\tau)-\widehat{X}_m(\tau)\right)d\tau\\\le p^{\omega}_{K,\mathbf{a}_1}(f)\int_{t_0}^{t}
\max_i\left\{p_{\overline{V}}(\overline{X}^i(\tau)-\overline{X}^i_m(\tau))\right\}d\tau,\quad\forall
t\in [t_0,T], \ \forall m>C,
\end{multline*}

Now assume that, for $j\in \{1,2,\ldots,n\}$, we have 
\begin{multline*}
p^{\omega}_{K,\mathbf{a},f}(\phi^X_j(t)-\phi^{X_m}_j(t))\le
\sum_{r=0}^{j-1}\left(
(r+1)(\tilde{m}(t))^{r}p^{\omega}_{K,\mathbf{a}_{r+1}}(f)\right)\times\\\int_{t_0}^{t}
\max_i\left\{p_{\overline{V}}(\overline{X}^i(\tau)-\overline{X}^i_m(\tau))\right\}d\tau,\quad\forall
t\in [t_0,T], \ \forall m>C.
\end{multline*}
We want to show that
\begin{multline*}
p^{\omega}_{K,\mathbf{a},f}(\phi^X_{n+1}(t)-\phi^{X_m}_{n+1}(t))\le
\sum_{r=0}^{n}\left(
(r+1)(\tilde{m}(t))^{r}p^{\omega}_{K,\mathbf{a}_{r+1}}(f)\right)\times\\\int_{t_0}^{t}
\max_i\left\{p_{\overline{V}}(\overline{X}^i(\tau)-\overline{X}^i_m(\tau))\right\}d\tau,\quad\forall
t\in [t_0,T], \ \forall m>C.
\end{multline*}
Note that one can write
\begin{multline*}
\phi^X_{n+1}(t)-\phi^{X_m}_{n+1}(t)=
\int_{t_0}^{t}(\phi^X_{n}(\tau)\scirc
\widehat{X}(\tau)-\phi^{X_m}_{n}\scirc
\widehat{X}_m(\tau))d\tau\\=\int_{t_0}^{t}\left(\phi^X_n(\tau)-\phi^{X_m}_n(\tau)\right)\scirc
\widehat{X}(\tau)d\tau\\+\int_{t_0}^{t} \phi^{X_m}_n(\tau)\scirc\left(\widehat{X}(\tau)-\widehat{X}_m(\tau)\right)d\tau\quad\forall t\in [t_0,T], \ \forall m>C.
\end{multline*}
Therefore, for every compact set $K\subseteq U$ and every $\mathbf{a}\in \mathbf{c}^{\downarrow}_{0}(\Z_{\ge 0},\R_{>0}, d)$, we have
\begin{multline*}
p_{K,\mathbf{a},f} (\phi^X_n(t)-\phi^{X_m}_n(t))\le \int_{t_0}^{t}p^{\omega}_{K,\mathbf{a},f}\left(\left(\phi^X_n(\tau)-\phi^{X_m}_n(\tau)\right)\scirc
\widehat{X}(\tau)\right)d\tau\\+\int_{t_0}^{t}
p^{\omega}_{K,\mathbf{a},f}\left(\phi^{X_m}_n(\tau)\scirc\left(\widehat{X}(\tau)-\widehat{X}_m(\tau)\right)\right)d\tau,\quad\forall
t\in [t_0,T], \ \forall m>C.
\end{multline*}
Note that, for every $\widehat{X},\widehat{Y}\in \mathrm{L}^1([t_0,T];\Gamma^{\omega}(TM))$, we have
\begin{equation*}
p^{\omega}_{K,\mathbf{a},f}\left(\phi^{X}_n(t)\scirc \widehat{Y}(t)\right)=p^{\omega}_{K,\mathbf{a},f}(\widehat{Y}(t))+\sum_{r=1}^{n}
p^{\omega}_{K,\mathbf{a},f}\left((\phi^X_{r}(t)-\phi^X_{r-1}(t))\scirc
  \widehat{Y}(t)\right)
\end{equation*}
Since, for every $r\in \N$, we have
\begin{equation*}
p^{\omega}_{K,\mathbf{a},f}\left(\phi^X_{r}(t)-\phi^X_{r-1}(t)\right)\le
(\tilde{m}(t))^rp^{\omega}_{K,\mathbf{a}_r}(f),\quad\forall t\in [t_0,T]
\end{equation*}
for every $\widehat{X},\widehat{Y}\in \mathrm{L}^1([t_0,T];\Gamma^{\omega}(TM))$, we have
\begin{equation*}
p^{\omega}_{K,\mathbf{a},f}\left(\phi^{X}_n(t)\scirc \widehat{Y}(t)\right)\le \sum_{r=0}^{n}
(\tilde{m}(t))^rp^{\omega}_{K,\mathbf{a}_r,f}\left(\widehat{Y}(t)\right),\quad\forall
t\in [t_0,T].
\end{equation*}
This implies that, for every $t\in [t_0,T]$ and every $m>C$, we have
\begin{multline*}
p^{\omega}_{K,\mathbf{a},f}\left(\phi^{X_m}_n(t)\scirc\left(\widehat{X}(t)-\widehat{X}_m(t)\right)\right)\le \sum_{r=0}^{n}
(\tilde{m}(t))^rp^{\omega}_{K,\mathbf{a}_r,f}\left(\widehat{X}(t)-\widehat{X}_m(t)\right)\\\le \sum_{r=0}^{n}\left((r+1)
(\tilde{m}(t))^rp^{\omega}_{K,\mathbf{a}_{r+1}}(f)\right)\max_{i}\left\{p_{\overline{V}}\left(\overline{X}^i(t)-\overline{X}^i_m(t)\right)\right\}.
\end{multline*}
Therefore, for every $t\in [t_0,T]$ and every $m>C$, we get
\begin{multline*}
p^{\omega}_{K,\mathbf{a},f} (\phi^X_{n+1}(t)-\phi^{X_m}_{n+1}(t))\\\le
\int_{t_0}^{t} \sum_{r=0}^{n-1} \left((r+1)(r+2)(\tilde{m}(t))^{r}m(t) p^{\omega}_{K,\mathbf{a}_{r+2}}(f)\right)\int_{t_0}^{t}
\max_i\left\{p_{\overline{V}}(\overline{X}^i(\tau)-\overline{X}^i_m(\tau))\right\}d\tau\\+\int_{t_0}^{t}
\sum_{r=0}^{n}\left((r+1)(\tilde{m}(\tau))^rp^{\omega}_{K,\mathbf{a}_{r+1}}(f)\right)\max_i\left\{p_{\overline{V}}\left(\overline{X}^i(t)-\overline{X}^i_m(t)\right)\right\}d\tau.
\end{multline*}
Using integration by parts, we have 
\begin{multline*}
p^{\omega}_{K,\mathbf{a},f} (\phi^X_{n+1}(t)-\phi^{X_m}_{n+1}(t))\le
\sum_{r=0}^{n} (r+1)(\tilde{m}(t))^{r} p^{\omega}_{K,\mathbf{a}_{r+1}}(f)\times\\\int_{t_0}^{t}
p_{\overline{V}}(\overline{X}^i(\tau)-\overline{X}^i_m(\tau))d\tau,\quad\forall
t\in [t_0,T], \ \forall m>C.
\end{multline*}
This completes the proof of the lemma
\end{proof}

Thus, for every $n\in \N$, we have
\begin{multline*}
p^{\omega}_{K,\mathbf{a},f}(\phi^X_n(t)-\phi^{X_m}_n(t))\\\le
\sum_{r=0}^{n-1}
(r+1)(\tilde{m}(t))^{r}p^{\omega}_{K,\mathbf{a}_{r+1}}(f)\left(\int_{t_0}^{t}
p_{\overline{V}}(\overline{X}^i(\tau)-\overline{X}^i_m(\tau))d\tau\right),\quad\forall
t\in [t_0,T], \ \forall m>C.
\end{multline*}
Since, for every $t\in [t_0,T]$, we have 
\begin{equation*}
|\tilde{m}(t)|<\frac{1}{2},
\end{equation*}
the series 
\begin{equation*}
\sum_{r=0}^{\infty}(r+1)(\tilde{m}(t))^{r}p^{\omega}_{K,\mathbf{a}_{r+1}}(f)
\end{equation*}
converges to a function $h(t)$, for every $t\in [t_0,T]$. By Lebesgue's
monotone convergence theorem, $h$ is integrable. This implies
that, for every $n\in \N$ and every $\mathbf{a}\in \mathbf{c}^{\downarrow}_{0}(\Z_{\ge
0},\R_{>0},d)$, 
\begin{equation*}
p^{\omega}_{K,\mathbf{a},f}\left(\phi^X_n(t)-\phi^{X_m}_n(t)\right)\le
h(t)
\int_{t_0}^{t}p_{\overline{V}}(\overline{X}^i(\tau)-\overline{X}^i_m(\tau))d\tau, \qquad\forall
t\in [t_0,T], \ \forall m>C.
\end{equation*}
Therefore, by taking the limit as $n$ goes to infinity of the left
hand side of the inequality, we have
\begin{equation*}
p^{\omega}_{K,\mathbf{a},f}\left(\phi^X(t)-\phi^{X_m}(t)\right)\le
h(t)
\int_{t_0}^{t}p_{\overline{V}}(\overline{X}^i(\tau)-\overline{X}^i_m(\tau))d\tau, \qquad\forall
t\in [t_0,T], \ \forall m>C.
\end{equation*}
This completes the proof of sequential continuity of $\exp$. 
\end{proof}

\bibliographystyle{plain} 
\bibliography{Ref.bib}

\end{document}